\definecolor{r}{rgb}{0.9,0.3,0.1}
\definecolor{b}{rgb}{0.1,0.3,0.9}
\newtheorem{theo}{Theorem}[section]
\newtheorem{defin}[theo]{Definition}
\newtheorem{prop}[theo]{Proposition}
\newtheorem{lemm}[theo]{Lemma}
\newcommand{\al}{\alpha}
\newcommand{\be}{\beta}
\newcommand{\ga}{\gamma}
\newcommand{\Ga}{\Gamma}
\newcommand{\om}{\omega}
\newcommand{\si}{\sigma}
\newcommand{\te}{\theta}
\newcommand{\De}{\Delta}
\newcommand{\de}{\delta}
\newcommand{\pa}{\partial}
\newcommand{\R}{{\mathbb R}^n}
\newcommand{\ri}{\rightarrow}
\newcommand{\Rn}{{\mathbb R}^{n-1}}
\newcommand{\na}{\nabla}
\newcommand{\calC}{{\mathcal C}}
\newcommand{\calI}{{\mathcal I}}
\newcommand{\bke}[1]{\left( #1 \right)}
\newcommand{\bkt}[1]{\left[ #1 \right]}
\newcommand{\bket}[1]{\left\{ #1 \right\}}
\newcommand{\norm}[1]{\left\Vert #1 \right\Vert}
\newcommand{\abs}[1]{\left| #1 \right|}
\def\XXint#1#2#3{{\setbox0=\hbox{$#1{#2#3}{\int}$}
     \vcenter{\hbox{$#2#3$}}\kern-.5\wd0}}
\newenvironment{proof-stokes}{{\par\noindent\bf
            Proof of Theorem \ref{maintheo-stokes} }}{\hfill\fbox{}\par\vspace{.2cm}}
\newenvironment{proof-stokes-g}{{\par\noindent\bf
            Proof of Theorem \ref{maintheo-stokes-g} }}{\hfill\fbox{}\par\vspace{.2cm}}
\newenvironment{proof-nse-a}{{\par\noindent\bf
            Proof of Theorem \ref{main-nse-est} }}{\hfill\fbox{}\par\vspace{.2cm}}
\newenvironment{proof-nse-b}{{\par\noindent\bf
            Proof of Theorem \ref{main-nse-reg} }}{\hfill\fbox{}\par\vspace{.2cm}}
\newenvironment{prop4}{{\par\noindent\bf
Proof of Proposition \ref{E's}}}{\hfill\fbox{}\par\vspace{.2cm}}
\begin{document}

\title{ Estimates of anisotropic Sobolev spaces with mixed norms for the Stokes system in a half-space}%${\mathbb R}^n_+ \times (0,T) $}

\author{Tongkeun Chang and Kyungkeun Kang}
%\address{Department of Mathematics, Yonsei University \\
%Seoul, 120-749, South Korea} \email{chang7357@yonsei.ac.kr}

%\author{Kyungkeun Kang}
%\address{Department of Mathematics, Yonsei University \\
%Seoul, 120-749, South Korea } \email{kkang@yonsei.ac.kr}

\date{}

%\thanks{}

\maketitle

\begin{abstract}
We are concerned with the non-stationary Stokes system with
non-homogeneous external force and non-zero initial data in
${\mathbb R}^n_+ \times (0,T)$. We obtain new estimates of solutions
including pressure in terms of mixed anisotropic Sobolev spaces. As
an application, some anisotropic Sobolev estimates are presented for
weak solutions of the Navier-Stokes equations in a half-space in
dimension three.

\noindent 2000  {\em Mathematics Subject Classification.}  35K51,
76D07. \\

\noindent {\it Keywords and phrases: Stokes System, Navier-Stokes
equations, anisotropic Sobolev space}
\end{abstract}

\maketitle

\section{Introduction}
\setcounter{equation}{0}

In this paper, we study the non-stationary Stokes system in a
half-space $\R_+ \times [0,T)$, $n\geq 3$
\begin{equation}\label{maineq2}
v_t - \De v + \na p =f, \qquad {\rm div} \, v =0 \qquad \mbox{ in
}\,Q^+_T:=\R_+ \times [0,T),
\end{equation}
where $v:Q^+_T\rightarrow\R$ is the velocity field and
$p:Q^+_T\rightarrow\mathbb R$ is the pressure. We consider the initial
and boundary value problem of \eqref{maineq2}, whereby no slip
boundary conditions are assigned, that is
\begin{equation}\label{maineq2-10}
v(x,0)=v_0(x)\quad \mbox{ and }\quad v(x,t)=0,\quad
x\in\partial\R_+= \Rn.
\end{equation}
The Stokes system is the linearized equations of the Navier-Stokes
equations describing motions of incompressible and viscous fluid
flows, which are given as follows:
\begin{equation}\label{maineq2-20}
v_t - \De v + (v\cdot\nabla) v+\na p =f, \qquad {\rm div} \, v =0.
%\qquad \mbox{ in }\,Q_T:=\R_+ \times [0,T).
\end{equation}
Since Leray \cite{L34} proved existence of weak solutions of the
Navier-Stokes equations \eqref{maineq2-20}, regularity of weak
solutions has remained open in three dimensions (see also
\cite{H51}). Lots of contributions have been made so far for
uniqueness and regularity of the Navier-Stokes equations and many
efforts to understand the Stokes system better have been also
performed (see e.g. \cite{P59}, \cite{S62}, \cite{L67}, \cite{VS77},
\cite{FJR}, \cite{CKN}, \cite{G86}, \cite{S88}, \cite{CL00},
\cite{ESS}, \cite{GKT}). However, when boundaries of domains are not
empty, comparatively small number of results have been known,
because of difficulty of pressure up to the boundary (see e.g
\cite{VS82}, \cite{S02}, \cite{Ka}, \cite{GKT1}). Among results with
non-empty boundaries, we recall the following $L^p$ estimates for
Stokes system in half-space (see e.g. \cite{So}): Let $1<p<\infty$
and $f\in L^p(Q^+_T)$ and $v_0\in\dot{B}^{2-\frac{2}{p}}(\R_+)$,
where $\dot{B}^{2-\frac{2}{p}}(\R_+)$ is a homogeneous Besov space.
Then the solution of the Stokes system \eqref{maineq2} and
\eqref{maineq2-10} satisfies
\begin{equation}\label{Lp-estimate}
 \| v_t\|_{L^p(Q^+_T)} + \| \na^2 v\|_{L^p(Q^+_T)} +\| \na
p\|_{L^p(Q^+_T)} \leq c\| f
\|_{L^p(Q^+_T)}+c\|v_0\|_{\dot{B}^{2-\frac{2}{p}}_p (\R_+)}.
\end{equation}
In \cite{GGS}, using the theory of Stokes operator, $v$ satisfies
the following estimate of fractional derivatives for $v_0=0$:
\begin{equation}\label{Lp-frac-est}
\int_0^T\| (\frac{d}{dt})^{1-\alpha}v\|^r_{L^p(\R_+)}dt + \int_0^T\|
A^{1-\alpha}_p v\|^r_{L^p(\R_+)}dt \leq c \int_0^T\| A^{-\alpha}_pf
\|^r_{L^p(\R_+)}dt,
\end{equation}
where $A_p$ is the Stokes operator and $0<\alpha<1$. As a
consequence, when $f={\rm{div}}\,F$ with
$F=(F_{ij})_{i,j=1,\cdots,n}$ and $\alpha=1/2$, \eqref{Lp-frac-est}
yields the following a priori estimate:
\begin{equation}\label{giga-grad-est}
\int_0^T\| (\frac{d}{dt})^{\frac{1}{2}}v\|^r_{L^p(\R_+)}dt +
\int_0^T\| \nabla v\|^r_{L^p(\R_+)}dt \leq c \int_0^T\| F
\|^r_{L^p(\R_+)}dt.
\end{equation}
Estimates for pressure were, however, not given in \cite{GGS}. Koch
and Solonnikov\cite{KS} consider the Stokes system \eqref{maineq2}
with $f=\nabla\cdot F$, zero initial data and no-slip boundary
conditions in three dimensional half space, and established that
\begin{equation}\label{Lp-grad-est}
\| \nabla v\|_{L^p(\mathbb R^3_+\times (0,T))} \leq c \| F
\|_{L^p(\mathbb R^3_+\times (0,T))}.
\end{equation}
Furthermore, it was also shown that there exists a $F\in L^p
(\mathbb R^3_+\times (0,T))$ such that corresponding pressure $p$ of
\eqref{maineq2} is not even in $L^p (\mathbb R^3_+\times (0,T))$
(see \cite[Theorem 1.3]{KS}). For comparison, such result is quite
different to that of the entire space $\mathbb R^3$, whereby $(v,
p)$ of Stokes system satisfies the following estimate:
\begin{equation}\label{Lp-grad-p}
\| \nabla v\|_{L^p(\mathbb R^3\times (0,T))} + \norm{p}_{L^p(\mathbb
R^3\times (0,T))} \leq c \| F \|_{L^p(\mathbb R^3\times (0,T))}.
\end{equation}

The main objective of this paper is to look for relevant function
classes that $f$ and $v_0$ belong to such that control of pressure
similar to \eqref{Lp-grad-p} holds in a half-space (see Theorem
\ref{maintheo-stokes} and Theorem \ref{maintheo-stokes-g}). As an
application of such estimate for the Stokes system, we present a new
estimate especially including the pressure of the Navier-Stokes
equations in a half space (see Theorem \ref{main-nse-est}).

Before we state our main results, we remind some function classes,
which are useful for our purpose. Let $\alpha\in\R$ and
$1<p,q<\infty$. We mean by $H^{\al}_{p} (\R_+ )$ a generalized
Sobolev space in a half-space and we also write a homogeneous
Sobolev space as $\dot H^{\al}_{p} (\R_+)$. Besides, we indicate
that $B^{\al}_{pq} (\R_+)$ and $\dot{B}^{\al}_{pq} (\R_+)$ are usual
Besov space and homogeneous Besov space, respectively. In addition,
we define function spaces $ H^{\al}_{p,0} ({\mathbb R}^n_+)$, $\dot
H^\al_{p,0} ({\mathbb R}^n_+)$,   $ B^{\al}_{pq,0} ({\mathbb
R}^n_+)$ and $\dot B^\al_{pq,0} ({\mathbb R}^n_+)$ as the closures
of $C^\infty_c ({\mathbb R}^n_+)$ in $ H^{\al}_{p} ({\mathbb
R}^n_+)$, $\dot H^\al_{p} ({\mathbb R}^n_+)$, $ B^{\al}_{pq}
({\mathbb R}^n_+)$ and $\dot B^\al_{pq} ({\mathbb R}^n_+)$. We
denote some anisotropic Sobolev spaces by  $H^{\al, \frac12
\al}_{pq} (\R_+ \times (0,T))$ and $H^{\al, \frac12 \al}_{pq,0}
(\R_+ \times (0,T))$ (see Section 2 for more details).

Now we are ready to state our first main result.

\begin{theo}\label{maintheo-stokes}
Let $0\leq \al \leq 2$, $0<T\le\infty$ and $1<p,q<\infty$. Suppose
that $f\in H^{\al-2, \frac12 \al-1}_{pq,0} ({\mathbb R}^n_+ \times
(0,T)).$ and $v_0\in {B}^{\al-\frac{2}{q}}_{pq,0}(\R_+)$ with
${\rm{div}}\, f =0$ and ${\rm{div}}\, v_0 =0$ in the sense of
distributions. Then, there exists a unique solution $v$ in $H^{\al,
\frac12 \al}_{pq} (\R_+ \times (0,T))$
%\times H^{\al -1, \frac\al 2 -\frac12}_{p}(\R_+ \times
%(0,T))$
of \eqref{maineq2} such that the following estimate is
satisfied:
\begin{equation}\label{CK-est-100}
%\| v\|_{H^{\al, \frac12 \al}_{pq} (\R_+ \times (0,T))}
%% +\|p\|_{H^{\al-1, \frac12 \al-\frac12}_p (\R_+ \times (0,T))}
%\leq c \| f\|_{ H^{\al -2, \frac12 \al -1}_{pq,0}(\R_+ \times (0,T))
%}+c \| v_0\|_{ {B}^{\al-\frac{2}{q}}_{pq}(\R_+) }.
\| v\|_{H^{\al, \frac12 \al}_{pq} (\R_+ \times (0,T))} \leq c \|
f\|_{H^{\al-2, \frac12 \al-1}_{pq,0} ({\mathbb R}^n_+ \times
(0,T))}+c \| v_0\|_{ {B}^{\al-\frac{2}{q}}_{pq,0}(\R_+) }.
\end{equation}
Moreover, if $1+1/p<\alpha\leq 2$, then $p\in
L_q(0,T;H_p^{\alpha-1}(\R_+))$ such that
\begin{equation}\label{CK-est-150}
%\| v\|_{H^{\al, \frac12 \al}_{pq} (\R_+ \times (0,T))}
\|p\|_{L_q(0,T;H_p^{\alpha-1}(\R_+))} \leq c\| f\|_{H^{\al-2,
\frac12 \al-1}_{pq,0} ({\mathbb R}^n_+ \times (0,T))}+c \| v_0\|_{
{B}^{\al-\frac{2}{q}}_{pq,0}(\R_+) }.
\end{equation}
If $f \in \dot H^{\al-2, \frac12 \al-1}_{pq,0} ({\mathbb R}^n_+
\times (0,T))$ and $v_0\in \dot{B}^{\al-\frac{2}{q}}_{pq,0}(\R_+)$,
then $v\in \dot H^{\al, \frac12 \al}_{pq} (\R_+ \times (0,T))$
%\times \dot H^{\al -1,
%\frac\al 2 -\frac12}_{p}(\R_+ \times (0,T))$
and satisfies
\begin{equation}\label{CK-est-200}
%\| v\|_{\dot H^{\al, \frac12 \al}_{pq} (\R_+ \times (0,T))}
%%+\|p\|_{\dot H^{\al-1, \frac12 \al-\frac12}_p (\R_+ \times (0,T))}
%\leq c \| f\|_{\dot H^{\al -2, \frac12 \al -1}_{pq,0}(\R_+ \times
%(0,T)) }+c \| v_0\|_{ \dot{B}^{\al-\frac{2}{q}}_{pq}(\R_+) }.
\| v\|_{\dot H^{\al, \frac12 \al}_{pq} (\R_+ \times (0,T))} \leq c
\| f\|_{\dot H^{\al-2, \frac12 \al-1}_{pq,0} ({\mathbb R}^n_+ \times
(0,T))}+c \| v_0\|_{ \dot{B}^{\al-\frac{2}{q}}_{pq,0}(\R_+) }.
\end{equation}
Furthermore, if $1+1/p<\alpha\leq 2$, then $p\in L_q(0,T;\dot
H_p^{\alpha-1}(\R_+))$ such that
\begin{equation}\label{CK-est-250}
%\| v\|_{H^{\al, \frac12 \al}_{pq} (\R_+ \times (0,T))}
\|p\|_{L_q(0,T;\dot H_p^{\alpha-1}(\R_+))}  \leq c \| f\|_{\dot
H^{\al-2, \frac12 \al-1}_{pq,0} ({\mathbb R}^n_+ \times (0,T))}+c \|
v_0\|_{ \dot{B}^{\al-\frac{2}{q}}_{pq,0}(\R_+) }.
\end{equation}
\end{theo}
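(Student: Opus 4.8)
The plan is to reduce the problem to three model situations that can be solved explicitly via the solution formula for the Stokes system in a half-space, and then to exploit the mixed-norm anisotropic calculus developed in Section~2 to carry the estimates through. First I would split the solution into $v = v^{(1)} + v^{(2)}$, where $v^{(1)}$ solves the Stokes system with data $f$ and zero initial value, and $v^{(2)}$ solves the homogeneous Stokes system (i.e.\ $f=0$) with initial value $v_0$. Because the problem is linear, the estimate \eqref{CK-est-100} follows once each piece is estimated by the corresponding norm of its own datum. For $v^{(2)}$ one uses the representation of the solution of the homogeneous Stokes system in a half-space through the Stokes semigroup (equivalently, through the heat semigroup corrected by a boundary layer term coming from the pressure). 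The key point is a trace/extension characterization: the solution operator $v_0 \mapsto v^{(2)}$ should map $B^{\al-2/q}_{pq,0}(\R_+)$ boundedly into $H^{\al,\frac12\al}_{pq}(\R_+\times(0,T))$; this is the parabolic trace theorem adapted to the Stokes setting, and the divergence-free condition on $v_0$ together with $v_0\in B^{\al-2/q}_{pq,0}$ is exactly what makes the boundary-layer correction harmless.

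For $v^{(1)}$, the heart of the matter, I would use the explicit solution formula for the non-homogeneous Stokes system in $\R_+$ (the Solonnikov/Ukai-type formula): $v^{(1)}$ is built from the heat potential applied to $f$, plus a correction term supported near the boundary that restores the no-slip condition and is expressed through a convolution with a kernel built out of the heat kernel and the Poisson kernel of the half-space. Since $f$ lies in $H^{\al-2,\frac12\al-1}_{pq,0}$, I would write $f$ (up to the compatibility condition $\mathrm{div}\,f=0$) in divergence form or, more flexibly, use the two parameters: the spatial smoothness $\al-2$ and the temporal smoothness $\frac12\al-1$ are linked by the parabolic scaling, so the composite operator ``solve Stokes'' gains exactly two spatial derivatives and one time derivative, landing $v^{(1)}$ in $H^{\al,\frac12\al}_{pq}$. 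The estimates for the boundary correction term are the delicate ones: one has to bound convolution operators with anisotropic kernels in mixed $L_q(L_p)$ norms, which is where the mixed-norm Fourier multiplier / Calder\'on--Zygmund machinery (presumably assembled in Section~2, e.g.\ a mixed-norm version of the Marcinkiewicz multiplier theorem or the Benedek--Calder\'on--Panzone theorem) gets used. Uniqueness follows from the a priori estimate applied to the difference of two solutions with zero data, together with density of smooth compactly supported fields (this is why the ``$0$'' subspaces appear in the hypotheses).

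For the pressure estimates \eqref{CK-est-150}, I would recover $p$ from $v$ and $f$ by taking the divergence of the momentum equation: since $\mathrm{div}\,v=0$ and $\mathrm{div}\,f=0$, formally $\Delta p = 0$ in $\R_+$, so $p$ is harmonic in space for a.e.\ time, and it is determined by its boundary behavior, namely by the normal component of the equation on $\Rn$, which reads $\partial_n p = \Delta v_n + f_n - \partial_t v_n$ on the boundary (using $v=0$, hence $\partial_t v_n = 0$ and the tangential second derivatives of $v_n$ vanish on $\Rn$, so only $\partial_n^2 v_n$ and $f_n$ survive). One then solves the Neumann problem for the Laplacian in the half-space with this boundary datum: $p(\cdot,t) = -\mathcal N[\partial_n^2 v_n(\cdot,t) + f_n(\cdot,t)]$ where $\mathcal N$ is the Neumann-to-solution operator, which gains one derivative. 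The requirement $1+1/p<\al$ is precisely the threshold at which the relevant trace $\partial_n^2 v_n|_{\Rn}$, coming from $v\in H^{\al}_p$ in space, makes sense as an element of $H^{\al-2-1/p}_p(\Rn)$ (the trace loses $1/p$), so that the Neumann potential produces $p(\cdot,t)\in H^{\al-1}_p(\R_+)$; integrating in $t$ gives the $L_q$ in time bound with constant controlled by $\|v\|_{H^{\al,\frac12\al}_{pq}}$, hence by the data. The homogeneous versions \eqref{CK-est-200}--\eqref{CK-est-250} are obtained by the identical argument with the homogeneous spaces throughout, using that all the kernels involved are homogeneous of the correct degree so that the homogeneous Littlewood--Paley norms are preserved.

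The main obstacle I expect is controlling the boundary-correction term in $v^{(1)}$ in the mixed anisotropic norm: the kernel there is not a standard parabolic Calder\'on--Zygmund kernel because of the interaction between the tangential variables, the normal variable, and time near the boundary, and one has to verify the mixed-norm multiplier hypotheses carefully (or decompose the kernel into pieces each of which is handled by a tangential multiplier estimate plus a one-dimensional normal-variable estimate). The parameter range $0\le\al\le2$ and the need to treat both the inhomogeneous and homogeneous scales uniformly means these kernel estimates must be scale-invariant, which is the technically heaviest part of the argument.
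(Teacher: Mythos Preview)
Your strategy for the velocity estimate---splitting $v=v^{(1)}+v^{(2)}$, invoking the explicit Solonnikov/Ukai solution formulae for each piece, and identifying the boundary-layer correction in $v^{(1)}$ as the place where the mixed-norm multiplier machinery must be deployed---is exactly the paper's approach (Sections~3 and~4).

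Your route to the pressure estimate, however, is different from the paper's and has a gap as written. The paper does not recover $p$ by solving a Neumann problem with datum $\partial_n p|_{x_n=0}=\partial_n^2 v_n + f_n$; instead it uses Solonnikov's explicit formula \eqref{expression-p}--\eqref{formulas-p}, writes $p=p_1+p_2$ as Poisson extensions of boundary traces of the \emph{heat potential} ${\mathcal U} f_j$ (not of $f$ itself), and applies the trace theorem to ${\mathcal U} f_j\in \dot H^{\al}_p$ and to $\partial_n{\mathcal U} f_j\in \dot H^{\al-1}_p$; the latter is precisely what forces $\al>1+1/p$. Your Neumann argument runs into two difficulties. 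First, the trace $\partial_n^2 v_n|_{x_n=0}$ is not directly controlled by $\|v\|_{H^{\al}_p}$ for $\al\le 2$; you must first use ${\rm div}\,v=0$ to rewrite $\partial_n^2 v_n=-\nabla'\cdot\partial_n v'$ and only then take the trace of $\partial_n v'\in H^{\al-1}_p$ (this step is missing from your sketch, and it is the real source of the threshold $\al>1+1/p$). Second, and more seriously, the term $f_n|_{x_n=0}$ in your Neumann datum is not bounded by $\|f\|_{H^{\al-2}_{p,0}}$, since the trace operator is unbounded on $H^{\al-2}_p$ whenever $\al-2\le 1/p$, which is always the case here. One can salvage the approach by first restricting to the dense class $f\in C^\infty_c(\R_+)$, where $f_n|_{x_n=0}=0$ identically, proving the a~priori estimate there, and then passing to the limit; but without this step the argument does not close. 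Finally, your uniqueness claim is circular: the a~priori estimate is established only for the \emph{constructed} solution, not for an arbitrary distributional solution. The paper handles uniqueness by a separate duality argument, testing the difference of two solutions against solutions of the adjoint problem (following \cite{So2}).
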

\noindent
%
%{\sidenote{\color{blue}Kyungkeun: ...sense of no-slip and initial
%data?...}}
%
\begin{remark}
In Theorem \ref{maintheo-stokes}, the solution $v$ means in the
following distribution sense: For any smooth vector field
$\psi\in\mathcal C^{\infty}_0(\R_+\times (0,T))$ with ${\rm
div}\,\psi=0$ and for any smooth scalar function
$\phi\in\calC^{\infty}_0(\R_+\times (0,T))$ $v$ satisfies
\begin{equation}\label{CK-july24-100}
\int^{T}_0\int_{{\mathbb R}^n_+}v\cdot(-\psi_t-\Delta
\psi)dxdt=0,\qquad \int^{T}_0\int_{{\mathbb R}^n_+}v\cdot\nabla\phi
dxdt=0.
\end{equation}
\end{remark}
\begin{remark}
The divergence free conditions for $f$ and $v_0$ in Theorem
\ref{maintheo-stokes} means that $<f, \nabla \phi>=0$ and $<v_0,
\nabla \phi>=0$ for any $\phi\in\calC^{\infty}_0(\mathbb R^n)$. We
also remark that the constant $c$ in
\eqref{CK-est-200}-\eqref{CK-est-250} is independent of $T$ for the
homogeneous case.
\end{remark}

The following is a consequence of Theorem \ref{maintheo-stokes}:

\begin{theo}\label{maintheo-stokes-g}
Let $1<p,q<\infty$ and $0\le \beta \le 1$. Suppose that
$f=\nabla\cdot F$, where $F=(F_{ij})_{i,j=1,\cdots,n}$ is a tensor
such that $F\in L_q(0,T;H^{\beta}_{p,0}(\R_+))$
%where $1/q=1/p+\beta/n$. Suppose $f=\nabla \cdot F$
and $v_0\in {B}^{1+\beta-\frac{2}{q}}_{pq,0}(\R_+)$ with
${\rm{div}}\, f =0$ and ${\rm{div}}\, v_0 =0$ in the sense of
distributions.
%Then $f\in
%\dot H^{\al -2, \frac12 \al -1}_{p,0}(\R_+ \times (0,T)$
Then, there exists a unique solution $v\in H^{1+\beta,
\frac{1+\beta}{2}}_{pq}(\R_+ \times (0,T))$ of \eqref{maineq2} such
that the following estimate is satisfied:
\begin{equation}\label{CK-est-300}
\| v\|_{H^{1+\beta, \frac{1+\beta}{2}}_{pq}(\R_+ \times (0,T))} \leq
c \| F\|_{L_q(0,T;H^{\beta}_{p,0}(\R_+))}+c \|
v_0\|_{{B}^{1+\beta-\frac{2}{q}}_{pq,0}(\R_+)}.
\end{equation}
Furthermore, if $\beta>1/p$, then $p\in L_q(0,T; H^{\beta}_p(\R_+))$
of \eqref{maineq2} such that
\begin{equation}\label{CK-est-310}
\| p\|_{L_q(0,T; H^{\beta}_p(\R_+))}\leq c \|
F\|_{L_q(0,T;H^{\beta}_{p,0}(\R_+))}+c \|
v_0\|_{{B}^{1+\beta-\frac{2}{q}}_{pq,0}(\R_+)}.
\end{equation}
If $F\in L_q(0,T;\dot H^{\beta}_{p,0}(\R_+)$ and $v_0\in \dot
{B}^{1+\beta-\frac{2}{q}}_{pq,0}(\R_+)$, there exist $v\in \dot
H^{1+\beta, \frac{1+\beta}{2}}_{pq}(\R_+ \times (0,T))$ with
$\beta\ge 0$ and  $p\in L_q(0,T; \dot H^{\beta}_p(\R_+))$ with
$\beta>1/p$ such that the estimates \eqref{CK-est-300} and
\eqref{CK-est-310} are replaced by
\begin{equation}\label{CK-est-350}
\| v\|_{\dot H^{1+\beta, \frac{1+\beta}{2}}_{pq}(\R_+ \times (0,T))}
\leq c \| F\|_{L_q(0,T;\dot H^{\beta}_{p,0}(\R_+))}+c \| v_0\|_{
\dot{B}^{1+\beta-\frac{2}{q}}_{pq,0}(\R_+) }.
\end{equation}
\begin{equation}\label{CK-est-360}
\| p\|_{L_q(0,T; \dot H^{\beta}_p(\R_+))}\leq c \| F\|_{L_q(0,T;\dot
H^{\beta}_{p,0}(\R_+))}+c \| v_0\|_{
\dot{B}^{1+\beta-\frac{2}{q}}_{pq,0}(\R_+) }.
\end{equation}
\end{theo}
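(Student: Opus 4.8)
The plan is to derive Theorem \ref{maintheo-stokes-g} from Theorem \ref{maintheo-stokes} by a suitable choice of the parameter $\al$ and by trading one spatial derivative between $f$ and $F$. Concretely, set $\al = 1+\be$, so that $0\le\al\le 2$ corresponds exactly to $0\le\be\le1$, and observe that the half-integer time exponent becomes $\tfrac12\al = \tfrac{1+\be}{2}$, matching the anisotropic space $H^{1+\be,\frac{1+\be}{2}}_{pq}(\R_+\times(0,T))$ in the statement. The initial-data space in Theorem \ref{maintheo-stokes} is ${B}^{\al-\frac2q}_{pq,0}(\R_+) = {B}^{1+\be-\frac2q}_{pq,0}(\R_+)$, again exactly as required. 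So the only real point is to identify $f=\na\cdot F$ with an element of $H^{\al-2,\frac12\al-1}_{pq,0}(\R_+\times(0,T)) = H^{\be-1,\frac{\be-1}{2}}_{pq,0}(\R_+\times(0,T))$ and to bound its norm by $\|F\|_{L_q(0,T;H^\be_{p,0}(\R_+))}$.

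The key lemma I would isolate (and prove in Section 2 alongside the other mapping properties of these spaces, or cite if already available there) is the boundedness of the divergence operator
\[
\na\cdot : L_q\bigl(0,T;H^\be_{p,0}(\R_+)\bigr) \longrightarrow H^{\be-1,\,\frac{\be-1}{2}}_{pq,0}\bigl(\R_+\times(0,T)\bigr),
\]
with operator norm independent of $T$, and likewise with the homogeneous spaces $\dot H^\be_{p,0}$ and $\dot H^{\be-1,\frac{\be-1}{2}}_{pq,0}$. Since differentiation in a spatial variable lowers the spatial Sobolev index by one and does not touch the time variable, this is essentially the statement that $H^{\be-1,\frac{\be-1}{2}}_{pq,0}$ contains $L_q(0,T;H^{\be-1}_{p,0})$ with a continuous embedding — which in turn follows because the "parabolic" space $H^{s,s/2}_{pq}$ is, by definition (Section 2), the intersection $L_q(0,T;H^s_p)\cap H^{s/2}_q(0,T;L_p)$ when $s\ge 0$, and by duality/interpolation one gets $L_q(0,T;H^{s}_p)\hookrightarrow H^{s,s/2}_{pq}$ for $s\le 0$ (here $s=\be-1\le 0$), the negative-time-smoothness factor being automatic since $L_q(0,T;L_p)\hookrightarrow H^{(\be-1)/2}_q(0,T;L_p)$ for $\be-1\le 0$. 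The subscript $0$ (closure of $C^\infty_c(\R_+)$) is preserved because $\na\cdot$ maps $C^\infty_c$ tensors to $C^\infty_c$ vector fields and is continuous. The divergence-free hypothesis ${\rm div}\,f=0$ transfers verbatim.

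With this lemma in hand the argument is immediate: given $F\in L_q(0,T;H^\be_{p,0}(\R_+))$ and $v_0\in B^{1+\be-\frac2q}_{pq,0}(\R_+)$ with ${\rm div}\,(\na\cdot F)=0$ and ${\rm div}\,v_0=0$, apply Theorem \ref{maintheo-stokes} with $\al=1+\be$ and $f=\na\cdot F$ to obtain a unique $v\in H^{1+\be,\frac{1+\be}{2}}_{pq}(\R_+\times(0,T))$ satisfying \eqref{CK-est-100}, and then chain with the lemma to replace $\|f\|_{H^{\be-1,\frac{\be-1}{2}}_{pq,0}}$ by $c\|F\|_{L_q(0,T;H^\be_{p,0})}$, giving \eqref{CK-est-300}. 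For the pressure: the condition $1+1/p<\al$ in Theorem \ref{maintheo-stokes} reads $\be>1/p$, and \eqref{CK-est-150} with $\al-1=\be$ gives $p\in L_q(0,T;H^\be_p(\R_+))$ together with \eqref{CK-est-310}, again after substituting the lemma's bound. The homogeneous estimates \eqref{CK-est-350}–\eqref{CK-est-360} follow in exactly the same way from \eqref{CK-est-200} and \eqref{CK-est-250}, using the homogeneous version of the divergence lemma; the $T$-independence of $c$ is inherited from Theorem \ref{maintheo-stokes} and the $T$-independence in the lemma. The main (and essentially only) obstacle is establishing the embedding $L_q(0,T;H^\be_{p,0})\hookrightarrow H^{\be-1,\frac{\be-1}{2}}_{pq,0}$ uniformly in $T$, i.e. checking the negative-index endpoint behaviour of these anisotropic spaces and the preservation of the boundary-vanishing condition — but this is a routine consequence of the definitions recalled in Section 2 and standard interpolation, requiring no new ideas.
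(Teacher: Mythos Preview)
Your proposal is correct and matches the paper's own proof essentially line for line: set $\al=1+\be$, use that $\na\cdot$ maps $L_q(0,T;H^\be_{p,0}(\R_+))$ into $L_q(0,T;H^{\be-1}_{p,0}(\R_+))$, invoke the embedding $L_q(0,T;H^{\be-1}_{p,0})\hookrightarrow H^{\be-1,\frac{\be-1}{2}}_{pq,0}$ (recorded in the paper as the remark after Proposition~\ref{iterated}), and then apply Theorem~\ref{maintheo-stokes}. The pressure and homogeneous cases follow exactly as you describe.
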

Although the proof of Theorem \ref{maintheo-stokes-g} is rather
straightforward, the details will be, for clarity, presented in
section 5.
\\
\\
\begin{remark}
In Theorem \ref{maintheo-stokes-g}, in case that $p<n/\beta$, we
observe due to Sobolev imbedding that
\begin{equation}\label{CK-May30-10}
\| p\|_{L_q(0,T; L_{\tilde{p}}(\R_+))}\leq c \|
F\|_{L_q(0,T;H^{\beta}_{p,0}(\R_+))}+c \|
v_0\|_{{B}^{1+\beta-\frac{2}{q}}_{pq,0}(\R_+)},
\end{equation}
where $1/\tilde{p}=1/p-\beta/n$. Compared the result in \cite{KS} to
ours, $F$ was assumed only in $L^p_{x,t}((0,T)\times\R_+)$ in
\cite{KS}, which allows an example that the pressure is not even in
$L^p_{x,t}((0,T)\times\R_+)$. In contrast, Theorem
\ref{maintheo-stokes-g} shows that if $F$ has a bit better
regularity in spatial variables, the pressure $p$ can be found in
$L^p_{x,t}((0,T)\times\R_+)$ with suitable choice of $q$ and
$\tilde{p}$ in \eqref{CK-May30-10}.\qed
\end{remark}
\\
\\
As an application of the estimates of Stokes system, %we improve
%regularity conditions of the Navier-Stokes equations in terms of
%scaling invariant functionals with scaled factors near boundary. To
%be more precise,
we consider the incompressible Navier-Stokes
equations:
\begin{equation}\label{nse-10-march22}
(NS)\,\,\left\{
\begin{array}{cl}
\displaystyle\frac{\partial v}{\partial t} + (v \cdot \nabla ) v-
\Delta v=-\nabla p, \quad\displaystyle \mbox{div } v=0&\qquad\mbox{
in }\,\, \Omega\times [0,T)
\\
\vspace{-3mm}\\
\displaystyle v(x,0)=v_0(x)&\qquad\mbox{ in }\,\, \Omega,
\end{array}\right.
\end{equation}
where  $v$ and $p$ are the flow velocity and the scalar pressure,
respectively. The initial data satisfy the compatibility condition,
i.e., ${\rm{div }}\,v_0=0$ and no slip boundary condition is imposed
for velocity $v$ at the boundary $\partial\Omega$, namely
\begin{equation}\label{nse-20-march22}
 v(x,t)=0,\qquad x\in \partial\Omega.
\end{equation}
In case that $\Omega=\mathbb{R}^3$, the following a priori estimate
of Calder\'on-Zygmund type is well-known:
\begin{equation}\label{nse-30-march22}
\norm{p}_{L^q(\mathbb{R}^3)}\leq
C\norm{v}^2_{L^{2q}(\mathbb{R}^3)},\qquad 1<q<\infty.
\end{equation}
Therefore, when $\Omega=\mathbb{R}^3$, it is straightforward that
weak solutions of \eqref{nse-10-march22} satisfy
\begin{equation}\label{nse-40-march22}
\norm{p}_{L^q(\mathbb{R}^3\times I)}\leq
C\norm{v}^2_{L^{2q}(\mathbb{R}^3\times I)},\qquad 1<q\leq
\frac{5}{3}.
\end{equation}
Weak solutions are defined in section 5 (see Definition
\ref{weak-def}).
%
%{\sidenote{\color{blue}Kyungkeun: ...definition of suitable weak
%solution?}}
%
However, it is not clear whether or not
\eqref{nse-30-march22}-\eqref{nse-40-march22} is valid for the case
that $\Omega$ has non-empty boundaries with no-slip condition
\eqref{nse-20-march22}. Instead, the following estimate is known for
the gradient of pressure (see e.g. \cite{So} and \cite{GS}):
\[
\norm{\nabla p}_{L^{m}(0,T;L^{l}(\mathbb{R}^3_+))}\leq
C\norm{v\nabla v}_{L^{m}(0,T;L^{l}(\mathbb{R}^3_+))}+C\|
v_0\|_{{B}^{1-\frac{2}{q}}_{pq,0}(\mathbb{R}^3_+)}
\]
\begin{equation}\label{nse-ck10-june12}
\leq C=C(\norm{u}_{L^{\infty}(0,T;L^{2}(\mathbb{R}^3_+))},
\norm{\nabla u}_{L^{2}(0,T;L^{2}(\mathbb{R}^3_+))}, \|
v_0\|_{{B}^{1-\frac{2}{q}}_{pq,0}(\mathbb{R}^3_+)}),
\end{equation}
where $3/l+2/m=4$ and $1<m<2$. In the case that
$\Omega=\mathbb{R}^3_+$, however, with the aid of estimates of the
Stokes system in Theorem \ref{maintheo-stokes}, we can improve the
estimate \eqref{nse-ck10-june12} in a half-space.
% more preciely
%\begin{equation}\label{nse-50-march22}
%\| p\|_{L^q(0,T; L^{\tilde{p}}(\mathbb{R}^3_+))}
%%\le c \|u^2\|_{L_q(0,T;H^{\beta}_{p,0}(\mathbb{R}^3_+))}
%\leq C=C(\norm{u}_{L^{\infty}(0,T;L^{2}(\mathbb{R}^3_+))},
%\norm{\nabla u}_{L^{2}(0,T;L^{2}(\mathbb{R}^3_+))}, \|
%v_0\|_{{B}^{1-\frac{2}{q}}_{pq,0}(\mathbb{R}^3_+)}),
%\end{equation}
%\begin{equation*}
%\beta>\frac{1}{p},\qquad
%\frac{1}{\tilde{p}}=\frac{1}{p}-\frac{\beta}{3},\qquad
%\frac{3}{\tilde{p}}+\frac{2}{q}=3,\qquad
%\frac{3}{2}<\tilde{p}<3,\qquad 1<q<2.
%\end{equation*}
%As far as we know, the estimate \eqref{nse-50-march22} is new for
%the Navier-Stokes equations in a half-space.
More precisely, we obtain the following:
\begin{theo}\label{main-nse-est}
Let $1<p\le 3/2$, $1/p<\beta\le 1$ and $1<q<2$ with
$3/p+2/q=3+\beta$. Assume that $v_0\in
{B}^{1+\beta-\frac{2}{q}}_{pq,0}(\mathbb{R}^3_+)$ with ${\rm{div}}\,
v_0 =0$ in the sense of distributions. Suppose that $v$ is a weak
solution of the \eqref{nse-10-march22} and $p$ the associated
pressure. Then,
\begin{equation}\label{nse-120-march22}
\| v\|_{L^q(0,T; H^{1+\beta}_p(\mathbb{R}^3_+))} +\| p\|_{L^q(0,T;
H^{\beta}_p(\mathbb{R}^3_+))}\le c \|
\abs{v}^2\|_{L^q(0,T;H^{\beta}_{p,0}(\mathbb{R}^3_+))}+c \|
v_0\|_{{B}^{1+\beta-\frac{2}{q}}_{pq,0}(\mathbb{R}^3_+)}.
\end{equation}
Furthermore, due to Sobolev imbedding,
\begin{equation}\label{nse-121-march22}
\| p\|_{L^q(0,T; L^{\tilde{p}}(\mathbb{R}^3_+))}\leq c \|
\abs{v}^2\|_{L^q(0,T;H^{\beta}_{p,0}(\mathbb{R}^3_+))}+c \|
v_0\|_{{B}^{1+\beta-\frac{2}{q}}_{pq,0}(\mathbb{R}^3_+)},
\end{equation}
where $1/\tilde{p}=1/p-\beta/3$ and $3/\tilde{p}+2/q=3$. The
righthand sides in \eqref{nse-120-march22} and
\eqref{nse-121-march22} are bounded by
$C=C(\norm{u}_{L^{\infty}(0,T;L^{2}(\mathbb{R}^3_+))}, \norm{\nabla
u}_{L^{2}(0,T;L^{2}(\mathbb{R}^3_+))}, \|
v_0\|_{{B}^{1-\frac{2}{q}}_{pq,0}(\mathbb{R}^3_+)})$.
\end{theo}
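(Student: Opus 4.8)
The plan is to derive Theorem \ref{main-nse-est} from Theorem \ref{maintheo-stokes-g} by treating the Navier--Stokes system \eqref{nse-10-march22} in $\mathbb{R}^3_+$ as a Stokes system with forcing $f=-\mathrm{div}(v\otimes v)$, that is $F=-v\otimes v$ in the notation of Theorem \ref{maintheo-stokes-g}. First I would note that since $v$ is a weak solution, it is divergence free, so $\mathrm{div}\, f = -\mathrm{div}\,\mathrm{div}(v\otimes v)$ is legitimate as a distribution and the compatibility $\langle f,\nabla\phi\rangle=0$ needed to invoke Theorem \ref{maintheo-stokes-g} holds; similarly $\mathrm{div}\, v_0=0$ is assumed. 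One also must check that $F=-v\otimes v$ lies in $L_q(0,T;H^\beta_{p,0}(\mathbb{R}^3_+))$: this is exactly the quantity $\||v|^2\|_{L^q(0,T;H^\beta_{p,0})}$ appearing on the right of \eqref{nse-120-march22}, and because $v$ vanishes on $\partial\mathbb{R}^3_+$ (no-slip), $|v|^2$ vanishes there too, so membership in the ``$0$'' subspace $H^\beta_{p,0}$ is consistent — I would spell this out since the range $1/p<\beta\le 1$ is precisely what makes the closure space $H^\beta_{p,0}$ meaningful (trace considerations).

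Granting that $F\in L_q(0,T;H^\beta_{p,0}(\mathbb{R}^3_+))$ and $v_0\in B^{1+\beta-2/q}_{pq,0}(\mathbb{R}^3_+)$, Theorem \ref{maintheo-stokes-g} (the inhomogeneous case, with the extra hypothesis $\beta>1/p$ that is assumed here) produces a unique solution $\bar v\in H^{1+\beta,\frac{1+\beta}{2}}_{pq}(\mathbb{R}^3_+\times(0,T))$ with associated pressure $\bar p\in L_q(0,T;H^\beta_p(\mathbb{R}^3_+))$ satisfying
\begin{equation}\label{nse-plan-10}
\|\bar v\|_{H^{1+\beta,\frac{1+\beta}{2}}_{pq}}+\|\bar p\|_{L_q(0,T;H^\beta_p)}\le c\,\|\,|v|^2\|_{L_q(0,T;H^\beta_{p,0})}+c\,\|v_0\|_{B^{1+\beta-2/q}_{pq,0}}.
\end{equation}
The next step is an identification argument: I must show $(v,p)=(\bar v,\bar p)$, i.e. that the weak solution of \eqref{nse-10-march22} coincides with the Stokes solution built with this forcing. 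This follows from the uniqueness clause in Theorem \ref{maintheo-stokes-g}, but to apply it I need to verify that $v$ itself is a distributional solution of the Stokes system with data $(f,v_0)$ in the sense of the Remark after Theorem \ref{maintheo-stokes} — that is, $v$ tested against divergence-free $\psi$ and against $\nabla\phi$ reproduces the right identities with $f=\mathrm{div}F$. That is immediate from the weak formulation of Navier--Stokes together with the definition of weak solution in Definition \ref{weak-def} (referenced as being in section 5). Once $v=\bar v$, the norm bound \eqref{nse-plan-10} is exactly \eqref{nse-120-march22} after noting $\|v\|_{L^q(0,T;H^{1+\beta}_p)}\le\|v\|_{H^{1+\beta,\frac{1+\beta}{2}}_{pq}}$. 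The estimate \eqref{nse-121-march22} then follows by the Sobolev embedding $H^\beta_p(\mathbb{R}^3_+)\hookrightarrow L^{\tilde p}(\mathbb{R}^3_+)$ with $1/\tilde p=1/p-\beta/3$, valid since $p\le 3/2<3/\beta$, applied inside the $L^q_t$ norm; the scaling relations $3/p+2/q=3+\beta$ and $3/\tilde p+2/q=3$ are just bookkeeping consistent with these choices.

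It remains to justify the final sentence, that the right-hand sides are controlled by the energy norms of $v$ together with $\|v_0\|_{B^{1-2/q}_{pq,0}}$. The term $\|v_0\|$ is a hypothesis, so the real content is bounding $\||v|^2\|_{L^q(0,T;H^\beta_{p,0}(\mathbb{R}^3_+))}$ by $\|v\|_{L^\infty_t L^2_x}$ and $\|\nabla v\|_{L^2_t L^2_x}$ under the constraint $3/p+2/q=3+\beta$, $1<p\le3/2$, $1<q<2$. Here I would interpolate: write $\||v|^2\|_{H^\beta_p}\lesssim\|v\|_{H^{\beta}_{2p}}\|v\|_{L^\infty}$-type products are too crude, so instead I would use the fractional Leibniz (Kato--Ponce) inequality to get $\||v|^2\|_{H^\beta_p}\lesssim\|v\|_{L^{r_1}}\|v\|_{H^\beta_{r_2}}$ with $1/p=1/r_1+1/r_2$, then bound $\|v\|_{H^\beta_{r_2}}$ by interpolating between $L^2$ and $H^1$ (Gagliardo--Nirenberg, $\|v\|_{\dot H^\beta_{r_2}}\lesssim\|v\|_{L^2}^{1-\theta}\|\nabla v\|_{L^2}^{\theta}$ for appropriate $\theta,r_2$ in $3$D) and likewise $\|v\|_{L^{r_1}}$, and finally integrate in time using Hölder with the energy bounds $v\in L^\infty_t L^2_x\cap L^2_t\dot H^1_x$, which in $3$D gives $v\in L^s_t L^\ell_x$ for $2/s+3/\ell=3/2$. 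Matching exponents so that the space–time integrability of $|v|^2$ reaches $L^q_t H^\beta_{p,x}$ is exactly where the numerology $3/p+2/q=3+\beta$, $q<2$, $p\le 3/2$ comes from, and I expect this interpolation/Hölder bookkeeping — making sure all the Sobolev and Gagliardo--Nirenberg exponents are admissible and the $\theta\in[0,1]$ constraints are met — to be the main technical obstacle, though it is entirely routine in spirit. The upgrade of $v$ from merely ``weak'' to having $|v|^2\in L^q_tH^\beta_{p,x}$ and then the bootstrap to $v\in L^q_tH^{1+\beta}_{p,x}$ is thus the heart of the matter; everything else is a direct citation of Theorem \ref{maintheo-stokes-g}.
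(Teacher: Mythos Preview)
There is a genuine gap in your argument. You assert that the compatibility condition $\langle f,\nabla\phi\rangle=0$ holds for $f=-\mathrm{div}(v\otimes v)$, i.e.\ that $\mathrm{div}\,\mathrm{div}(v\otimes v)=0$. This is false: using $\mathrm{div}\,v=0$ one computes $\partial_i\partial_j(v_iv_j)=(\partial_i v_j)(\partial_j v_i)$, which does not vanish in general. Since Theorem~\ref{maintheo-stokes-g} (like Theorem~\ref{maintheo-stokes}) is stated under the hypothesis $\mathrm{div}\,f=0$ in the sense of distributions, you cannot invoke it directly with $F=-v\otimes v$.

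The paper repairs exactly this point via Lemma~\ref{decompositon-helmholtz}: one performs a Helmholtz-type decomposition $\mathrm{div}(v\otimes v)=-\nabla\pi+w$ with $\mathrm{div}\,w=0$, where $\pi$ solves the Dirichlet problem \eqref{nse-60}, and one has the bound $\|\pi\|_{\dot H^\beta_p}+\|w\|_{\dot H^{\beta-1}_{p,0}}\le C\|v\otimes v\|_{\dot H^\beta_{p,0}}$. One then applies Theorem~\ref{maintheo-stokes} to the Stokes system with the \emph{divergence-free} forcing $w$; the pressure produced is $\hat p=p-\pi$, and combining the Stokes estimate for $\hat p$ with the elliptic estimate for $\pi$ yields \eqref{nse-120-march22}. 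Your outline is otherwise on the right track: the identification of the weak Navier--Stokes solution with the Stokes solution, the Sobolev embedding giving \eqref{nse-121-march22}, and the interpolation argument bounding $\||v|^2\|_{L^q_tH^\beta_{p,x}}$ by the energy norms (the paper's Lemma~\ref{interpolation-june13}, essentially the fractional Leibniz estimate you sketch) are all as you describe. The missing ingredient is the Helmholtz step that makes the forcing admissible for the linear theorem.
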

The proof of Theorem \ref{main-nse-est} will be given in section 5.
This paper is organized as follows. In section 2, we introduce some
function spaces. Theorem \ref{maintheo-stokes} will be proved in
section 3 and section 4 by treating non-homogeneous case and
non-zero initial data separately. Section 5 is devoted to proofs of
Theorem \ref{maintheo-stokes-g} and Theorem \ref{main-nse-est}.

\section{Preliminaries}

In this section, we recall some function spaces, remind some known
results used later and provide proofs of preliminary results useful
for our purpose. We start with introducing function spaces. As a
notational convenience, if there exists a constant $c>0$ such that
$cA\leq B\leq c^{-1}A$, then we write $A\approx B$.

\subsection{Function spaces}\label{spaces} \setcounter{equation}{0}

%We remind first the  Sobolev spaces.
%{\sidenote{\color{blue}Kyungkeun: ...Add Besov space
%somewhere?}}

$\bullet$\,\,(Sobolev and Besov spaces in  ${\mathbb R}^{n}$)\,\,
%\subsection{Anisotropic Sobolev spaces in ${\mathbb R}^{n+1}$}
For $\al \in {\mathbb R}$, we consider  distributions $g_\al$ and
$G_\al$, whose Fourier transforms in ${\mathbb R}^{n}$ are given as
follows: For $\xi  \in {\mathbb R}^n$
\begin{align*}
\widehat{G_\al} (\xi) =(1 + 4\pi^2 |\xi|^2)^{-\frac{\al}{2}},\qquad
  \widehat{ g_{\al}} (\xi) = ( 4\pi^2 |\xi|^2)^{-\frac{\al}{2}}.
\end{align*}
For $\al \in {\mathbb R}$ and $ 1 \leq p \leq \infty $ we define the
generalized Sobolev space $H^{\al }_p ({\mathbb R}^{n})$ and the
homogeneous Sobolev space $\dot H^{\al }_p ({\mathbb R}^{n})$ by
\begin{align*}
H^{\al}_p({\mathbb R}^{n}) &= \{ f \in {\mathcal S}'({\mathbb
R}^{n}) \, | \,\,\|f\|_{ H^{\al}_p({\mathbb R}^{n})}:=\| G_{-\al} *
f
\|_{L^p({\mathbb R}^{n})} <\infty \},\\
\dot H^{\al}_p({\mathbb R}^{n}) &  = \{ f \in {\mathcal S}'({\mathbb
R}^{n}) \, | \,\|f\|_{\dot H^{\al}_p({\mathbb R}^{n})} : = \|
g_{-\al} * f \|_{L^p({\mathbb R}^{n})}<\infty\},
\end{align*}
 where
$*$ is a convolution in ${\mathbb R}^{n}$ and ${\mathcal
S}^{'}({\mathbb R}^{n})$
 is the dual space of the Schwartz space
${\mathcal S}({\mathbb R}^{n})$.
%Similarly, generalized homogeneous
%Sobolev space $\dot H^{\al}_p ({\mathbb R}^{n})$ is introduced as
%\[
%\dot H^{\al}_p({\mathbb R}^{n})  = \{ f \in {\mathcal
%S}'({\mathbb R}^{n}) \, | \,\|f\|_{\dot
%H^{\al}_p({\mathbb R}^{n})} : = \| g_{-\al} * f
%\|_{L^p({\mathbb R}^{n})}<\infty\}.
%\]
%with norms
%\begin{align*}
%\|f\|_{H^{\al,\frac12\al}_p({\mathbb R}^{n+1})} : &= \| H_{-\al} * f  \|_{L^p({\mathbb R}^{n+1})},\\
%\|f\|_{\dot H^{\al,\frac12\al}_p({\mathbb R}^{n+1})} : & = \|
%h_{-\al} * f  \|_{L^p({\mathbb R}^{n+1})},
%\end{align*}
We note that for non-negative integer $k$ and $1 \le p \le \infty$,
\begin{align}\label{sepa-sobolev}
H^{k}_p ({\mathbb R}^{n})& = \{ f \in L^p ({\mathbb R}^{n}) \, | \,
\sum_{0 \leq l \leq k}  D_x^{l} f \in L^p ({\mathbb R}^{n})  \},\\
\dot H^{ k}_p ({\mathbb R}^{n}) &  = \{ f \in L_{{\rm loc}}^p
({\mathbb R}^{n}) \, | \, \sum_{  l = k} D_x^{l} f \in L^p ({\mathbb
R}^{n}) \}.
\end{align}
%{\sidenote{\color{blue}Kyungkeun: ...Check if $1\leq p, \, q<\infty$
%for Besov space is to be $1\leq p, \, q\le \infty$?....}}
Let $\al
\in {\mathbb R}$ and $1\leq p, \, q<\infty$. The Besov space and the
homogeneous Besov space in ${\mathbb R}^n$, denoted by
$B_{pq}^\al({\mathbb R}^n)$ and $\dot B_{pq}^\al({\mathbb R}^n)$,
are defined as follows, respectively:
\begin{align*}
B_{pq}^\al({\mathbb R}^n) &= \{ f \,| \, \| f
\|_{B_{pq}^\al({\mathbb
R}^n)} := \| f * \psi\|_{L^p} + [ \sum_{1 \leq k < \infty} (2^{\al k} \| f * \phi_k\|_{L^p})^q ]^\frac1q < \infty \},\\
\dot B_{pq}^\al({\mathbb R}^n) & = \{ f \,| \, \| f \|_{ \dot
B_{pq}^\al({\mathbb R}^n)}  :
       =  [ \sum_{ -\infty < k < \infty} (2^{\al k} \| f * \phi_k\|_{L^p})^q ]^\frac1q < \infty \},
\end{align*}
where $\psi$ and $\phi$ are functions in Schwartz space in ${\mathbb
R}^n$ such that $\phi_k(x) = 2^{kn} \phi(2^k x)$ with ${\rm supp} \,
\hat \phi(\xi)= \{ \xi \, | \, 2^{-2} < |\xi | < 2^2 \}$ and
\[
%\vspace{2mm}
%{\rm supp} \, \hat \phi(\xi)= \{ \xi \, | \, 2^{-2} \leq |\xi | \leq 2^2 \}, \\
\hat \psi(\xi) + \sum_{ 1 \leq  k < \infty } \hat \phi(2^{-k}\xi )
=1 \,\,\mbox{ for }\,\xi \in {\mathbb R}^n,  \qquad    \sum_{
-\infty < k < \infty } \hat \phi(2^{-k}\xi ) =1 \,\,\mbox{ for }\,
\xi  \neq 0.
\]
%Similarly, we define the Besov spaces $B_{pq}^\al({\mathbb
%R}^{n-1})$ and $\dot B_{pq}^\al({\mathbb R}^{n-1})$ in ${\mathbb
%R}^{n-1}$, respectively.
%{\sidenote{\color{blue}Kyungkeun: ...Is it
%necessary to define Besov spaces in ${\mathbb R}^{n-1}$?....}}
\\
\\
The following properties of complex interpolation and norm
equivalence can be found in \cite[Theorem 6.3.2, Theorem 6.4.5]{BL}
and \cite[Theorem 4.17]{BS}.
\begin{prop}\label{prop2}
Let $1 < p, \, q < \infty$ and $\al_1, \al, \al_2 \in {\mathbb R}$
with $\al_1\le \al\le \al_2$.  %Then, we have the following:
\begin{itemize}
\item[(i)]
 $\dot H^\al_p ({\mathbb R}^{n}) \cap L^p ({\mathbb
R}^{n}) = H^\al_p ({\mathbb R}^{n})$ and $\| f\|_{H^\al_p} \approx
\| f\|_{L^p} + \| f\|_{\dot H^\al_p}$ for $\al > 0$.
\item[(ii)] (Complex interpolation)\,\,Let $\theta\in (0,1)$ with $\al = (1-\te) \al_0 + \te
\al_1$.
\begin{align*}
[ H^{\al_0}_p ,  H^{\al_1}_p ]_{\te} = H^{\al}_{p}, \qquad\qquad [
\dot H^{\al_0}_p ,  \dot H^{\al_1}_p ]_{\te} =
 \dot H^{\al}_{p},\\
 [ B^{\al_0}_{pq} ,  B^{\al_1}_{pq} ]_{\te} =
B^{\al}_{pq}, \qquad\qquad [ \dot B^{\al_0}_{pq} ,  \dot
B^{\al_1}_{pq} ]_{\te} =
 \dot B^{\al}_{pq}.
\end{align*}
\item[(iii)]
Let $k < \al< k+1$ for non-negative integer $k$. Then,
\[
\| f \|_{B^\al_{pq}(\R)}\thickapprox \| f\|_{H^k_p (\R)} +
\sum_{|\be|=k}\Big( \int_{\R} ( \int_{\R} \frac{|D^\be f(x) -D^\be
f(y)|^p}{|x -y|^{n + \frac{p}{q} \al}} dy)^{\frac{q}{p}}
dx\Big)^{\frac1q},
\]
\[
\| f \|_{\dot B^\al_{pq}(\R)} \thickapprox  \sum_{|\be|=k}\Big(
\int_{\R} ( \int_{\R} \frac{|D^\be f(x) -D^\be f(y)|^p}{|x -y|^{n +
\frac{p}{q} \al}} dy)^{\frac{q}{p}} dx\Big)^{\frac1q},
\]
%\begin{align*}
%\| f \|_{B^\al_{pq}(\R)} & \thickapprox \| f\|_{H^k_p (\R)}
%+ \sum_{|\be|=k}\Big( \int_{\R} ( \int_{\R} \frac{|D^\be f(x) -D^\be f(y)|^p}{|x -y|^{n + \frac{p}{q} \al}} dy)^{\frac{q}{p}} dx\Big)^{\frac1p},\\
%\| f \|_{\dot B^\al_{pq}(\R)} & \thickapprox  \sum_{|\be|=k}\Big( \int_{\R} ( \int_{\R} \frac{|D^\be f(x) -D^\be f(y)|^p}{|x -y|^{n + \frac{p}{q} \al}} dy)^{\frac{q}{p}} dx\Big)^{\frac1p},
%\end{align*}
where $\be = (\be_1, \be_2, \cdots, \be_n) \in   ({\mathbb N} \cup \{ 0 \})^n$ and $D^\be = D^{\be_1}_{x_1} D^{\be_2}_{x_2} \cdots D^{\be_n}_{x_n}$.
\end{itemize}
\end{prop}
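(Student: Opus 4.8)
The plan is to recover (i)--(iii) from the classical theory; as already signalled, full details are in \cite[Theorem 6.3.2, Theorem 6.4.5]{BL} and \cite[Theorem 4.17]{BS}, so here I only sketch the mechanisms. For (i) I would argue purely with Fourier multipliers. Writing $\La=(-\De)^{1/2}$ and $\langle D\rangle=(I-\De)^{1/2}$, the assertion is equivalent, for $1<p<\infty$ and $\al>0$, to the $L^p(\R)$-boundedness of the three operators $\La^\al\langle D\rangle^{-\al}$, $\langle D\rangle^{-\al}$ and $\langle D\rangle^{\al}(I+\La^\al)^{-1}$, whose symbols are (up to the $2\pi$-normalisation) $\big(|\xi|^2/(1+|\xi|^2)\big)^{\al/2}$, $(1+|\xi|^2)^{-\al/2}$ and $(1+|\xi|^2)^{\al/2}/(1+|\xi|^\al)$; indeed the first two control $\|f\|_{L^p}+\|\La^\al f\|_{L^p}\le c\|f\|_{H^\al_p}$ and the third controls the reverse inequality. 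Each symbol is smooth away from $\xi=0$, has finite limits at $\xi=0$ and $\xi=\infty$, and satisfies the Mikhlin--H\"ormander bounds $|\xi|^{|\ga|}|\pa^\ga m(\xi)|\le C$ uniformly (near the origin the only non-smooth contribution is of size $|\xi|^\al\to0$ after the required weight), so a smooth dyadic splitting into low and high frequencies makes the verification routine. This yields the identity $H^\al_p(\R)=L^p(\R)\cap\dot H^\al_p(\R)$ with equivalent norms.

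For (ii) I would invoke Calder\'on's complex interpolation theorem along the Bessel-potential scale: since $\langle D\rangle^{\be}$ is an isometric isomorphism of $H^{\al+\be}_p(\R)$ onto $H^{\al}_p(\R)$ and the analytic family $z\mapsto\langle D\rangle^{\,(1-z)\al_0+z\al_1-\al}$ has admissible growth on the strip $\{0\le\mathrm{Re}\,z\le1\}$, one obtains $[H^{\al_0}_p,H^{\al_1}_p]_\te=H^\al_p$ with $\al=(1-\te)\al_0+\te\al_1$; the homogeneous statement is the same computation carried out modulo polynomials with $\langle D\rangle$ replaced by $\La$. For the Besov scale I would use the standard retraction: the map $f\mapsto(f*\phi_k)_k$, together with the extra component $f*\psi$ in the inhomogeneous case, identifies $B^\al_{pq}(\R)$, resp. $\dot B^\al_{pq}(\R)$, with a complemented subspace of a weighted $\ell^q$-space of $L^p(\R)$-valued sequences; since $[\ell^q_{\al_0}(L^p),\ell^q_{\al_1}(L^p)]_\te=\ell^q_\al(L^p)$ is elementary, transporting this back through the retraction yields the four displayed identities.

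For (iii) I would set $\al=k+\si$ with $0<\si<1$ and $|\be|=k$, and compare the Littlewood--Paley seminorm of $f$ with the difference integral on the right-hand side. A change of variables $y=x+h$ and passage to polar coordinates rewrites the inner integral as a quantity of the type $\int_0^\infty t^{-1-\frac{p}{q}\si}\,\omega_p(D^\be f,t)^p\,dt$, where $\omega_p(\cdot,t)$ is the first-order $L^p$ modulus of continuity; the classical fact that this quantity is equivalent to the homogeneous seminorm $\|D^\be f\|_{\dot B^\si_{pq}(\R)}$, hence to $\|f\|_{\dot B^{k+\si}_{pq}(\R)}$, then finishes the homogeneous case, and adding $\|f\|_{H^k_p(\R)}$ restores the low frequencies for the inhomogeneous one. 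The constraint $k<\al<k+1$ is precisely what makes first-order (rather than higher-order) differences of $D^\be f$ adequate.

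I expect the only real difficulty to be bookkeeping: keeping the homogeneous versus inhomogeneous scales --- and the polynomial ambiguity attached to the homogeneous ones --- consistent throughout, and, in (iii), matching the Gagliardo exponent $n+\frac{p}{q}\al$ in the statement to the Besov index $\al$ once the modulus-of-continuity description has been unfolded. No new idea beyond the cited references is needed.
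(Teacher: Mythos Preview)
Your proposal is fine. The paper does not give any proof of this proposition at all: it simply states the result and refers the reader to \cite[Theorem 6.3.2, Theorem 6.4.5]{BL} and \cite[Theorem 4.17]{BS}, exactly the sources you cite. Your sketch --- Mikhlin--H\"ormander multipliers for (i), the Bessel-potential isomorphism plus the sequence-space retraction for (ii), and the modulus-of-continuity description for (iii) --- is the standard route taken in those references, so there is nothing to compare; you have supplied more than the paper does.
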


\noindent $\bullet$\,\,(Sobolev space, Besov space and their dual
spaces in $\R_+$)\,\,
%\subsection{Anisotropic Sobolev spaces in ${\mathbb R}^n_+ \times I$}
Let $r f$ be a restriction over ${\mathbb R}^n_+$ of the function
$f$ defined in ${\mathbb R}^n$.  For $\al \geq 0$, we define
function spaces in a half space as follows:
\begin{align*}
H^{\al}_p ({\mathbb R}^n_+)
 : = \bket{r f \, | \, f \in H^{\al }_p({\mathbb
 R}^n)},\qquad
 \dot H^{\al}_p ({\mathbb R}^n_+)
 : = \bket{ r f \, | \, f \in \dot H^{\al }_p({\mathbb R}^n)},\\
 B^{\al}_{pq} ({\mathbb R}^n_+)
 : = \bket{ r f \, | \, f \in B^{\al }_{pq}({\mathbb
 R}^n)},\qquad
 \dot B^{\al}_{pq} ({\mathbb R}^n_+)
 : = \bket{ r f \, | \, f \in \dot B^{\al }_{pq}({\mathbb R}^n)}
\end{align*}
with their norms $\| f\|_{X ({\mathbb R}^n_+) }: = \inf \| F\|_{X
({\mathbb R}^n)  }$ for all $F\in X({\mathbb R}^n)$ with $r F=f$,
where $X = H^{\al}_p,\, \dot H^{\al}_p,\, B^{\al}_{pq}$ and $\dot
B^{\al}_{pq}$ . Here we note that for non-negative integer $k$ and
$1 < p < \infty$ (see \cite[Chapter 2]{JK})
\begin{align}\label{CK10-april04}
H^{k}_p ({\mathbb R}^n_+) & =  \{ f \, | \, \sum_{ 0 \leq   l \leq k }
\|  D_x^{l} f\|_{L^p({\mathbb R}^n_+)} < \infty \},\\
\dot H^{ k}_p ({\mathbb R}^n_+) & = \{ f \, | \, \sum_{ l = k } \|
D_x^{l} f\|_{L^p({\mathbb R}^n_+)} < \infty \}.
\end{align}
%
%(See  Chapter 2 and their applications for homogeneous Sobolev space
%in \cite{JK}) {\sidenote{\color{blue}Kyungkeun: ...May we skip the
%sentence (see Chapter 2...) ?....}}
%\\
%\\
Following arguments in \cite[Chapter 2]{JK}, for $ \al > 0$ and
integers $k_1, k_2\geq 0$ we observe that
\begin{align}\label{CK20-april04}
\begin{array}{l}\vspace{2mm}
\bkt{H^{k_1}_p ({\mathbb R}^n_+), H^{ k_2}_p ({\mathbb R}^n_+)}_\te
= H^{\al}_p ({\mathbb R}^n_+),\quad \bkt{\dot H^{ k_1}_p ({\mathbb
R}^n_+), \dot H^{ k_2}_p ({\mathbb R}^n_+)}_\te = \dot H^{\al}_p
({\mathbb R}^n_+),
\end{array}
\end{align}
where $\al = \te k_1 + (1 -\te) k_2$. For later use, we recall an
interpolation result in \cite[section 1.18.4]{T}.
%
%{\sidenote{\color{blue}Kyungkeun: ...Do we need to specify $I$ in
%Proposition?....}}

\begin{prop}\label{prop-1}
Let $I=(0,T)$, $ \al_1, \,\, \al_2 \in {\mathbb R}, \,\, 0 < \te< 1,
\,\, \al =
\te \al_1 + (1 -\te) \al_2$ and $1 < p <\infty$. %Suppose that either
%$\calU:=I$ or $\calU:={\mathbb R}^n_+$.
%Then, we have the following:
\[
\bkt{L^p(I;X_1),
L^p(I;X_2)}_{\theta}=L^p(I;\bkt{X_1;X_2}_{\theta}),
\]
where $(X_1, X_2):=(H^{\al_1 }_p ({\mathbb R}^n_+), H^{\al_2 }_p
({\mathbb R}^n_+))$ or $(X_1, X_2):=(\dot H^{\al_1 }_p ({\mathbb
R}^n_+), \dot H^{\al_2 }_p ({\mathbb R}^n_+))$.
\end{prop}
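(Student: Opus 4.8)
The plan is to deduce Proposition~\ref{prop-1} from the general principle that complex interpolation commutes with the formation of vector-valued $L^p$-spaces. Concretely, I would invoke the following classical statement (Calder\'on; see also \cite[Theorem 5.1.2]{BL}): if $\bar A=(A_0,A_1)$ is a compatible Banach couple and $1<p<\infty$, then $(L^p(I;A_0),L^p(I;A_1))$ is again a compatible Banach couple and
\[
[L^p(I;A_0),L^p(I;A_1)]_\theta=L^p(I;[A_0,A_1]_\theta),\qquad 0<\theta<1,
\]
with equivalence (indeed equality) of norms. Applying this with $\bar A=(H^{\al_1}_p(\R_+),H^{\al_2}_p(\R_+))$ — a compatible couple because both spaces embed continuously into a common space of distributions on $\R_+$ — gives the inhomogeneous assertion at once; the homogeneous assertion follows in the same way once one fixes a common ambient Hausdorff space for the couple $(\dot H^{\al_1}_p(\R_+),\dot H^{\al_2}_p(\R_+))$, e.g.\ restrictions to $\R_+$ of tempered distributions modulo polynomials (equivalently, working with the Lizorkin subclass on which the Riesz potentials act), so that both homogeneous spaces sit inside one topological vector space and the couple is compatible.

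For completeness I would also sketch the proof of the cited commutation lemma. The substantive inclusion is $L^p(I;[A_0,A_1]_\theta)\hookrightarrow[L^p(I;A_0),L^p(I;A_1)]_\theta$: given $f\in L^p(I;[A_0,A_1]_\theta)$ one uses the definition of the complex method to produce, for each $t$, an $\bar A$-valued map $z\mapsto F(t,z)$ holomorphic on the strip $\{0<\mathrm{Re}\,z<1\}$, bounded and continuous up to the boundary, with $F(t,\theta)=f(t)$ and $\max_{j=0,1}\sup_s\|F(t,j+is)\|_{A_j}\le(1+\epsilon)\|f(t)\|_{[A_0,A_1]_\theta}$; the key point is that this can be arranged measurably in $t$, so that $F$ defines an element of $\mathcal F(L^p(I;A_0),L^p(I;A_1))$ and hence $\|f\|_{[L^p(I;A_0),L^p(I;A_1)]_\theta}\le(1+\epsilon)\|f\|_{L^p(I;[A_0,A_1]_\theta)}$. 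The reverse inclusion is softer: if $F\in\mathcal F(L^p(I;A_0),L^p(I;A_1))$ represents $f$, then for a.e.\ $t$ the slice $z\mapsto F(z)(t)$ lies in $\mathcal F(\bar A)$ and represents $f(t)$, and combining the pointwise estimate $\|f(t)\|_{[A_0,A_1]_\theta}\le\max_{j}\|F(j+i\cdot)(t)\|_{L^\infty(\mathbb R;A_j)}$ with Minkowski's integral inequality in $L^p(I)$ and optimizing over $F$ yields $\|f\|_{L^p(I;[A_0,A_1]_\theta)}\le\|f\|_{[L^p(I;A_0),L^p(I;A_1)]_\theta}$.

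The main obstacle is precisely the measurable-dependence step in the first inclusion: one must choose the holomorphic extensions $F(t,\cdot)$ so that $t\mapsto F(t,\cdot)$ is Bochner measurable into $\mathcal F(\bar A)$ and the resulting boundary traces are $L^p$-integrable. For simple functions $f=\sum_k\chi_{E_k}a_k$ with $a_k\in[A_0,A_1]_\theta$ this is transparent (take $F(t,z)=\sum_k\chi_{E_k}(t)G_k(z)$ with $G_k\in\mathcal F(\bar A)$ a near-optimal extension of $a_k$), and a density/limiting argument in $L^p(I;[A_0,A_1]_\theta)$ — using completeness of $\mathcal F(\bar A)$ and the fact that $\|\cdot\|_{\mathcal F(\bar A)}$ dominates the boundary suprema — upgrades it to general $f$. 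In the concrete setting of Proposition~\ref{prop-1} one can in fact bypass the abstract lemma altogether: using the explicit interpolation formulas already recorded in Proposition~\ref{prop2}(ii) and \eqref{CK20-april04} together with the Bessel/Riesz potential operators, one writes down the extension $F(t,z)$ by an explicit operator acting on $f(t)$, which renders the dependence on $t$ manifestly measurable and makes the required $L^p$-bounds immediate.
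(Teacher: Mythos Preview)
Your proposal is correct and, if anything, goes well beyond what the paper does: the paper does not prove Proposition~\ref{prop-1} at all but simply records it as a known interpolation result, citing \cite[section~1.18.4]{T}. Your approach---citing the equivalent statement in \cite{BL} and then sketching Calder\'on's argument for the commutation of complex interpolation with vector-valued $L^p$---is entirely in the same spirit, just with a different standard reference and considerably more detail than the paper supplies.
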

%
%\noindent $\bullet$\,\,(Dual spaces in ${\mathbb R}^n_+$)\,\, %%

We denote by $H_{p,0}^{-\al}({\mathbb R}^n_+) $, $\dot
H_{p,0}^{-\al} ({\mathbb R}^n_+)$,  $B_{pq,0}^{-\al}({\mathbb
R}^n_+) $ and $\dot B_{pq,0}^{-\al} ({\mathbb R}^n_+)$ the dual
spaces of $ H_{p'}^\al({\mathbb R}^n_+)$, $\dot H_{p'}^\al({\mathbb
R}_+^{n})$, $ B_{p'q'}^\al({\mathbb R}^n_+)$ and $\dot
B_{p'q'}^\al({\mathbb R}_+^{n})$ with $\frac1{p} + \frac1{p'} =1$
and $\frac1q + \frac1{q'} =1$, namely
\begin{align*}
H_{p,0}^{-\al}({\mathbb R}^n_+) =  (H_{p'}^\al({\mathbb
R}^n_+))',\qquad \dot H_{p,0}^{-\al}({\mathbb R}^n_+) =(\dot
H_{p'}^\al({\mathbb R}_+^{n}))',\\
 B_{pq,0}^{-\al}({\mathbb
R}^n_+) = (B_{p'q'}^\al({\mathbb R}^n_+))',\qquad \dot
B_{pq,0}^{-\al}({\mathbb R}^n_+) =(\dot B_{p'q'}^\al({\mathbb
R}_+^{n}))'.
\end{align*}
Here we recall some results of trace theorem between Sobolev spaces
and Besov spaces.

\begin{prop}\label{prop-2}
%Suppose that either $F \in H^\al_p ({\mathbb R}^n_+)$ or $F \in
%\dot{H}^\al_p ({\mathbb R}^n_+)$.
\begin{itemize}
\item[(i)]
If $w \in H^\al_p ({\mathbb R}^n_+)$ with $ \al > \frac1p$, then
$w|_{x_n =0} \in B^{\al -\frac1p}_p({\mathbb R}^{n-1})$  and
\begin{align*}
\norm{ w\big|_{x_n =0}}_{ B^{\al -\frac1p}_p({\mathbb R}^{n-1})}
\leq c \| w \|_{ H^{\al}_p({\mathbb R}^n_+)}.
\end{align*}
\item[(ii)]
If $w \in \dot H^\al_p ({\mathbb R}^n_+)$ with $ \al > \frac1p$,
then $w\big|_{x_n =0} \in \dot B^{\al -\frac1p}_p({\mathbb
R}^{n-1})$ and
\begin{equation}\label{CK15-april11}
\norm{ w\big|_{x_n =0}}_{ \dot B^{\al -\frac1p}_p({\mathbb
R}^{n-1})} \leq c \| w \|_{ \dot H^{\al}_p({\mathbb R}^n_+)}.
\end{equation}
If $w \in \dot H^\al_p ({\mathbb R}^n_+)$ with $\al \geq 0$ and $w$
is harmonic in ${\mathbb R}^n_+$, then $w\big|_{x_n =0} \in \dot
B^{\al -\frac1p}_p({\mathbb R}^{n-1})$ and
\begin{equation}\label{CK300-april10}
\norm{ w\big|_{x_n =0}}_{\dot B^{\al-\frac1p}_p({\mathbb
R}^{n-1})}\approx \| w\|_{\dot H^\al_p({\mathbb R}^n_+)}.
%c\| w\|_{\dot H^\al_p({\mathbb R}^n_+)} \le\norm{ w\big|_{x_n
%=0}}_{\dot B^{\al-\frac1p}_p({\mathbb R}^{n-1})}\le c^{-1}\|
%w\|_{\dot H^\al_p({\mathbb R}^n_+)}.
\end{equation}
\end{itemize}
\end{prop}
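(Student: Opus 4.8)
The plan is to deduce (i) and the first assertion of (ii) from the classical trace theorems on $\R$, and to prove the harmonic equivalence \eqref{CK300-april10} by interpolating between an $\al=0$ endpoint and integer endpoints.

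For (i): since $H^\al_p(\R_+)$ is defined by restriction, choose $W\in H^\al_p(\R)$ with $rW=w$ and $\|W\|_{H^\al_p(\R)}\le 2\|w\|_{H^\al_p(\R_+)}$. As the trace onto $\{x_n=0\}$ of an extension does not depend on the chosen extension, it suffices to prove the whole-space bound $\norm{W\big|_{x_n=0}}_{B^{\al-\frac1p}_{pp}(\Rn)}\le c\,\|W\|_{H^\al_p(\R)}$ for $\al>\frac1p$. For integer $\al=k\ge1$ this is Gagliardo's trace theorem: for $k=1$ it follows from the fundamental theorem of calculus in $x_n$ and the difference description of the $B^{1-\frac1p}_{pp}$-seminorm in Proposition \ref{prop2}(iii), and one iterates for larger $k$. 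For non-integer $\al\ge1$, interpolate: by Proposition \ref{prop2}(ii), $[H^{k_1}_p(\R),H^{k_2}_p(\R)]_\te=H^\al_p(\R)$ and $[B^{k_1-\frac1p}_{pp}(\Rn),B^{k_2-\frac1p}_{pp}(\Rn)]_\te=B^{\al-\frac1p}_{pp}(\Rn)$ for suitable $\te$, so the trace stays bounded. For $\frac1p<\al<1$, estimate $\norm{W\big|_{x_n=0}}_{B^{\al-\frac1p}_{pp}(\Rn)}$ directly from Proposition \ref{prop2}(iii) with $k=0$, bounding the Gagliardo double integral of $W(\cdot,0)$ by $\|W\|_{H^\al_p(\R)}$ by the standard near/far splitting. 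The first assertion of (ii) is the same argument verbatim, with the homogeneous scales throughout and $\|W\|_{\dot H^\al_p(\R)}$ taken arbitrarily close to $\|w\|_{\dot H^\al_p(\R_+)}$.

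Turning to \eqref{CK300-april10}, write $w$ as the Poisson integral of $g:=w\big|_{x_n=0}$, i.e. $w(x',x_n)=(P_{x_n}*g)(x')$; the assertion is that the harmonic extension $P\colon g\mapsto\big((x',x_n)\mapsto(P_{x_n}*g)(x')\big)$ and the restriction to $\{x_n=0\}$ are mutually inverse isomorphisms between $\dot B^{\al-\frac1p}_{pp}(\Rn)$ and the harmonic functions in $\dot H^\al_p(\R_+)$. Since the restriction undoes $P$, it suffices to prove $\|Pg\|_{\dot H^\al_p(\R_+)}\le c\,\|g\|_{\dot B^{\al-\frac1p}_{pp}(\Rn)}$ and $\norm{w\big|_{x_n=0}}_{\dot B^{\al-\frac1p}_{pp}(\Rn)}\le c\,\|w\|_{\dot H^\al_p(\R_+)}$ for harmonic $w$. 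I would establish both at the endpoints $\al=0$ and $\al=k$ (integer $k\ge1$) and then interpolate: by \eqref{CK20-april04}, $\dot H^\al_p(\R_+)=[\dot H^{k_1}_p(\R_+),\dot H^{k_2}_p(\R_+)]_\te$ including $k_1=0$ (where $\dot H^0_p=L^p$), and by Proposition \ref{prop2}(ii), $\dot B^{\al-\frac1p}_{pp}(\Rn)=[\dot B^{k_1-\frac1p}_{pp}(\Rn),\dot B^{k_2-\frac1p}_{pp}(\Rn)]_\te$, so boundedness of $P$ and of the restriction at the integer and zero levels passes to all $\al\ge0$, harmonicity being preserved by $P$. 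For integer $\al=k\ge1$ the equivalence $\|Pg\|_{\dot H^k_p(\R_+)}\approx\|g\|_{\dot B^{k-\frac1p}_{pp}(\Rn)}$ is classical: one writes the tangential and normal derivatives of $w$ as Poisson extensions of Riesz transforms of $g$ and uses the $L^p(\Rn)$-boundedness of Riesz transforms with the $L^p$ Littlewood--Paley ($g$-function) estimate for Poisson integrals. For $\al=0$, use the Poisson-semigroup description $\|g\|_{\dot B^{-\frac1p}_{pp}(\Rn)}^p\approx\int_0^\infty t^{p}\,\|\partial_t w(\cdot,t)\|_{L^p(\Rn)}^p\,dt$ together with two elementary facts: the interior gradient estimate for harmonic functions gives $t^{p}\,\|\partial_t w(\cdot,t)\|_{L^p(\Rn)}^p\le c\,t^{-1}\!\int_{t/2}^{2t}\|w(\cdot,s)\|_{L^p(\Rn)}^p\,ds$, hence $\norm{w\big|_{x_n=0}}_{\dot B^{-\frac1p}_{pp}(\Rn)}\le c\,\|w\|_{L^p(\R_+)}$ after integrating in $t$ and using Fubini; and, conversely, $\|P_tg\|_{L^p(\Rn)}\le\int_t^\infty\|\partial_s P_sg\|_{L^p(\Rn)}\,ds$ followed by Hardy's inequality in $t$ gives $\|Pg\|_{L^p(\R_+)}\le c\,\|g\|_{\dot B^{-\frac1p}_{pp}(\Rn)}$. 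Finally, \eqref{CK15-april11} for $\al>\frac1p$ is exactly the upper bound in \eqref{CK300-april10} once one knows the restriction is well defined there.

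The main obstacle is the bookkeeping at the $\al=0$ endpoint in the homogeneous scales: one must make sense of $w\big|_{x_n=0}$, for harmonic $w\in L^p(\R_+)$, as a genuine element (modulo polynomials) of $\dot B^{-\frac1p}_{pp}(\Rn)$, justify the Poisson-semigroup characterization of homogeneous Besov spaces of negative smoothness, and verify that the complex interpolation identities invoked above remain valid with an endpoint of index $0$ (where $\dot H^0_p=L^p$). This is precisely the regime feeding the pressure estimates of Theorem \ref{maintheo-stokes}, so it cannot be avoided. The remaining ingredients---Gagliardo's trace theorem, the Riesz-transform and $g$-function bounds, Hardy's inequality, and interior estimates for harmonic functions---are standard.
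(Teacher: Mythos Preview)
Your proposal is correct and follows the same strategy as the paper, which treats this proposition as essentially known: the paper cites \cite{JW} for (i), \cite{JA} for \eqref{CK15-april11}, \cite[Lemma~2.1]{KS} for the $\al=0$ endpoint of \eqref{CK300-april10}, and then writes only ``via the argument of interpolations'' for the general $\al>0$. Your sketch fleshes out exactly these ingredients---Gagliardo trace plus interpolation for (i)--(ii), and for the harmonic equivalence the $\al=0$ endpoint (which you prove via the Poisson-semigroup description and Hardy's inequality, where the paper simply invokes Koch--Solonnikov) together with integer endpoints and complex interpolation---so there is no substantive difference in approach.
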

%%%%%%%%%%%%%%%%%
%{\sidenote{\color{blue}Kyungkeun: ...It is valid for non-homogeneous
%case, isn't it?}}
%%%%%%%%%%%%
We remark that the non-homogeneous case with $\alpha>1/p$ was
treated in \cite{JW} and the estimate \eqref{CK15-april11} of
homogeneous case was shown in \cite{JA}. When $w$ is harmonic and
$\al=0$, \eqref{CK300-april10} was proved in \cite[Lemma 2.1]{KS}.
Via the argument of interpolations, we obtain \eqref{CK300-april10}
for general case $\al>0$.
\\
\\
$\bullet$\,\,(Mixed anisotropic Sobolev spaces)\,\,
%We  define a
%fractional derivative with respect to time.
Let $0 < \si \leq 1$. For $ f \in C^\infty_0({\mathbb R})$, we
introduce a functional given as follows:
%a fractional derivative with respect to time as follows:
\begin{align*}
I_\si f(t) = \frac{1}{\Ga(\si)} \int_{-\infty}^t
\frac{f(s)}{(t-s)^{1-\si}} ds,\qquad \qquad -\infty < t <\infty,
\end{align*}
where $\Ga(\si)$ is  a Gamma function. The $\sigma-$th fractional
derivative of $f$, $D^\si_t f$, can be defined by
%%%%%%%%%%%%%%%%%
%{\sidenote{\color{blue}Kyungkeun: ...It is necessary to define
%$D^1_t f(t)$?...}}
%%%%%%%%%%%%
\begin{align*}
D^\si_t f(t) = \frac{d}{dt} I_{1 -\si} f(t)\quad \mbox{for} \quad 0
< \si<1 \quad \mbox{and} \quad D^1_t f(t) = \frac{d}{dt} f(t).
\end{align*}
By the simple computation, we have
\begin{align}\label{sum}
I_{\si_1}(I_{\si_2} f) = I_{\si_1 + \si_2} f, \qquad
 D^{\si_1}_t( D_t^{\si_2} f) = D^{\si_1 + \si_2}_t f
\end{align}
for $0 < \si_1, \, \si_2 < 1$,  $\si_1 + \si_2 \leq 1$ and $f \in
C^\infty_0 ({\mathbb R})$.
%%%%%%%%%%%%%%%%%
%{\sidenote{\color{blue}Kyungkeun: ...Is computation valid for $\si_1
%+ \si_2 =1$?...}}
%%%%%%%%%%%%
Furthermore, we have
\begin{equation}\label{CK-july21-100}
\widehat{I_{\si} f} (\tau) =  m_{\si}(\tau) \hat f(\tau),
\qquad\qquad m_{\si}(\tau) =  (a_{\si} - i b_{\si} sign (\tau))
|\tau|^{-\si},
\end{equation}
where
\begin{align*}
a_\si = \int_0^\infty \frac{\cos \, t}{t^\si} dt, \qquad\qquad
b_\si = \int_0^\infty \frac{\sin \, t}{t^\si} dt.
\end{align*}
Therefore, we note that
\[
\widehat{D^\si_t f} (\tau) = i \tau m_{1-\si}(\tau) \hat f(\tau) = i
(a_{1-\si} - i b_{1-\si} sign (\tau)) sign (\tau) |\tau|^{\si} \hat
f(\tau).
\]
We also define the dual operator $J_\si$ of $I_\si$ as follows:
\begin{align*}
J_\si f(t) =\frac{1}{\Ga (\si)}  \int_t^\infty \frac{f(s)}{(s -t)^{1
-\si}} ds, \qquad D^{* \si }_t f(t) = D_t J_{1-\si} f(t)
\end{align*}
such that
\begin{align*}
\int_{{\mathbb R}} I_\si f(t) g(t) dt = \int_{{\mathbb R}} f(t)
J_\si g(t) dt, \qquad \int_{{\mathbb R}} D_t^\si f(t) g(t) dt = -
\int_{{\mathbb R}} f(t)D^{* \si }_t g(t) dt.
\end{align*}
Using the Plancherel theorem, we get $\widehat{J_\si g}(\tau) =\bar
m_\si(\tau) \hat g(\tau)$, where $\bar m_\si$ is the  complex
conjugate of $m_\si$ in \eqref{CK-july21-100}. Rewriting $\bar m_\si
(\tau) = m_\si (\tau) \frac{\bar m_\si (\tau)}{m_\si (\tau)}$ and
noting that
\begin{align*}
\frac{\bar m_\si (\tau)}{m_\si (\tau)} =\frac{a_\si^2 -b_\si^2 +2 i
a_\si b_\si sign (\tau)  }{a_\si^2 + b_\si^2},
\end{align*}
we get
\begin{align*}
J_\si g(t) = I_\si (A_\si g + B_\si H g)(t), \quad %\mbox{and so}
\quad  D_t^{\si *} g(t) = D_t^\si  (A_{1-\si} g + B_{1-\si} H g)(t),
\end{align*}
where $A _\si = (a_\si^2 -b_\si^2)/(a_\si^2 + b_\si^2)$, $B_\si = (2
a_\si b_\si)/(a_\si^2 + b_\si^2)$ and $H$ is the Hilbert transform.
Next, we define the mixed anisotropic Sobolev spaces $ H^{\al,
\frac12 \al}_{pq}({\mathbb R}^{n+1})$ and $\dot H^{\al, \frac12
\al}_{pq}({\mathbb R}^{n+1})$ with $0 < \al \leq 2$ as follows:
\begin{align*}
 H^{\al, \frac12 \al}_{pq}({\mathbb R}^{n+1}) : & = \{ f \in L^q_t L^p_x \, | \, \| f\|^q_{ H^{\al, \frac12 \al}_{pq}({\mathbb R}^{n+1}) }:
            = \int_{{\mathbb R}}  \Big(\| D^{\frac12 \al}_t f\|^q_{L^p({\mathbb R}^n)}
                    + \| f\|^q_{H^\al_p({\mathbb R}^n)} \Big) dt < \infty \},\\
\dot H^{\al, \frac12 \al}_{pq}({\mathbb R}^{n+1}) : & = \{ f \in L^1_{loc}({\mathbb R}^{n+1}) \, | \,
                    \| f\|^q_{\dot H^{\al, \frac12 \al}_{pq}({\mathbb R}^{n+1}) }:
            = \int_{{\mathbb R}}  \Big(\| D_t^{\frac12 \al} f\|^q_{L^p({\mathbb R}^n)}
               + \| f\|^q_{\dot H^\al_p({\mathbb R}^n)} \Big) dt < \infty \}.
\end{align*}
Note that if  $ {\rm supp} \, f \subset [0,T)$ (so that $f(T) =0$),
then
\begin{align*}
J_\si f(t) =\frac{1}{\Ga (\si)}  \int_t^T \frac{f(s)}{(s -t)^{1
-\si}} ds.
\end{align*}
%%%%%%%%%%%%%%%%%
%{\sidenote{\color{blue}Kyungkeun: ...Supp f means only for time
%variable, not spatial variable, doesn't it?...What is the meaning of
%$*$?...}}
%%%%%%%%%%%%
Let $I = (0,T)$ and we denote $Q_T = \R \times I$, $Q^+_T = \R_+
\times I$, unless any confusion is to be expected. Here we define
the mixed anisotropic Sobolev spaces $ H^{\al, \frac12
\al}_{pq,*}(Q_T^+)$ and $\dot H^{\al, \frac12 \al}_{pq,*}(Q_T^+),
\,\, 0 < \al \leq 2$ in the following way: We say that $f$ is in  $
H^{\al, \frac12 \al}_{pq,*}(Q_T^+)$  ($f$ is in $\dot H^{\al,
\frac12 \al}_{pq,*}(Q_T^+)$) means ${\rm supp} \, f \subset
\overline{\R_+} \times [0, T)$ and

\begin{align*}
\| f\|^q_{ H^{\al, \frac12 \al}_{pq,*}(Q^+_T) }: &
            = \int_{I}  \Big(\| D^{*\frac12 \al}_t f\|^q_{L^p({\mathbb R}^n_+)}
                    + \| f\|^q_{H^\al_p({\mathbb R}^n_+)} \Big) dt < \infty\\
\Big(\| f\|^q_{\dot H^{\al, \frac12 \al}_{pq,*}(Q^+_T) }: &
            = \int_{I}  \Big(\| D^{*\frac12 \al}_t f\|^q_{L^p({\mathbb R}^n_+)}
                    + \| f\|^q_{\dot  H^\al_p({\mathbb R}^n_+)} \Big) dt < \infty \Big).
\end{align*}
We mean by $ H^{-\al , -\frac12 \al}_{p'q',0} (Q_T^+)$ and $ \dot
H^{-\al , -\frac12 \al}_{p'q',0} (Q_T^+)$ the dual spaces of  $
H^{\al , \frac12 \al}_{pq,*} (Q_T^+)$ and $ \dot H^{\al , \frac12
\al}_{pq,*} (Q_T^+)$, where $\frac1p + \frac{1}{p'} =1$ and $\frac1q
+ \frac{1}{q'} =1$. Similarly, we define function spaces $H^{\al,
\frac12 \al}_{pq,*}(Q_T)$,  $\dot H^{\al, \frac12 \al}_{pq,*}(Q_T) $
and their dual spaces $H^{-\al, -\frac12 \al}_{p'q',0}(Q_T)$, $ \dot
H^{-\al, -\frac12 \al}_{p'q',0}(Q_T)$, respectively.

\begin{remark}
We note that $L^q(I; C^\infty_c(\R))$ is dense subset of $H^{-\al,
-\frac12 \al}_{pq,0}(Q_T)$ and $\dot H^{-\al, -\frac12
\al}_{pq,0}(Q_T)$. We also remark that  $\dot H^{\al, \frac12
\al}_{pq,*} (\R_+ \times I) \subset L^q (I;\dot H^{\al}_p (\R_+))$,
$L^q (I;\dot H^{-\al}_p (\R_+)) \subset \dot H^{-\al , -\frac12
\al}_{pq,0} (\R_+ \times I) $ and
\begin{align*}
\| f \|_{\dot H^{-\al , -\frac12 \al}_{pq,0} (\R_+ \times I)} \leq
\| f \|_{L^q_t (I;\dot H^{-\al}_p (\R_+))}.
\end{align*}
\end{remark}

For $\phi \in \dot H^{\al,\frac12 \al}_{pq*}(Q_T)$ ($\phi \in
H^{\al,\frac12 \al}_{pq*}(Q_T))$, we define $\tilde \phi \in \dot
H^{\al, \frac12 \al}_{pq}({\mathbb R}^{n+1})$ ($\tilde \phi \in
H^{\al, \frac12 \al}_{pq}({\mathbb R}^{n+1})$ ) by
\begin{align}\label{CK-july22-100}
\tilde \phi(x,t) = \left\{ \begin{array}{ll}
\phi(x,t), & x \in {\mathbb R}^n, \,\,  0 < t <T,\\
\phi(x,-t), & x \in {\mathbb R}^n, \,\,  -T < t < 0,\\
0, & \mbox{otherwise}
\end{array}
\right.
\end{align}
and it is direct that $\| \tilde\phi\|_{\dot H^{\al, \frac12
\al}_{pq}({\mathbb R}^{n+1})  } \leq c \| \phi\|_{\dot H^{\al,
\frac12 \al}_{pq*}(Q_T)}$ ( $\| \tilde\phi\|_{ H^{\al, \frac12
\al}_{pq}({\mathbb R}^{n+1})  } \leq c \| \phi\|_{ H^{\al, \frac12
\al}_{pq*}(Q_T)}$ ). Next proposition shows that fractional
derivatives are bounded operators between anisotropic Sobolev
spaces.

\begin{prop}\label{iterated}
Let $0 \le \si_1, \, \si_2 \le 1$ and  $\si_1 + \si_2 \leq 1$.
\begin{itemize}
\item[(i)]
$D_t^{\frac12\si_1}$ and $\De^{\frac12\si_1}$ are bounded operators
from $\dot H^{\si_1+\si_2, \frac12 \si_1 + \frac12 \si_2}_{pq}({\mathbb R}^{n+1})$ to
$\dot H^{\si_2, \frac12 \si_2}_{pq}({\mathbb R}^{n+1})$.
\item[(ii)]
$D_t^{* \frac12\si_1}$  and $\De^{\frac12 \si_1}$ are bounded
operators from $\dot H^{\si_1+\si_2, \frac12 \si_1 + \frac12
\si_2}_{pq*}(Q_T)$ to $\dot H^{\si_2, \frac12 \si_2}_{pq*}(Q_T)$.
\end{itemize}
\end{prop}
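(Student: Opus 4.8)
The plan is to prove part (i) on the full space ${\mathbb R}^{n+1}$ by Fourier multiplier considerations, and then transfer to the half-line-in-time setting of part (ii) via the duality/reflection machinery already set up. For part (i), fix $f \in \dot H^{\si_1+\si_2, \frac12\si_1+\frac12\si_2}_{pq}({\mathbb R}^{n+1})$. By definition of this space, the norm is $\int_{\mathbb R}(\|D_t^{\frac12(\si_1+\si_2)}f\|^q_{L^p({\mathbb R}^n)} + \|f\|^q_{\dot H^{\si_1+\si_2}_p({\mathbb R}^n)})\,dt$. To bound $\|D_t^{\frac12\si_1}f\|_{\dot H^{\si_2,\frac12\si_2}_{pq}}$ I must control two terms: $\int \|D_t^{\frac12\si_2}D_t^{\frac12\si_1}f\|^q_{L^p}\,dt$ and $\int \|D_t^{\frac12\si_1}f\|^q_{\dot H^{\si_2}_p}\,dt$. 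For the first, the composition rule \eqref{sum} gives $D_t^{\frac12\si_2}D_t^{\frac12\si_1}f = D_t^{\frac12(\si_1+\si_2)}f$ (using $\tfrac12\si_1+\tfrac12\si_2\le\tfrac12\le 1$), so this term is literally one of the two pieces of the source norm — done. For the second term, I need that the operator $D_t^{\frac12\si_1}$ maps, for a.e.\ fixed frequency behaviour, $\dot H^{\si_1+\si_2}_p({\mathbb R}^n)$-valued functions in time into $\dot H^{\si_2}_p({\mathbb R}^n)$-valued ones, after integrating the $q$-th power in $t$. The natural route is to use a vector-valued Mihlin--H\"ormander / Marcinkiewicz multiplier theorem in the time variable: the symbol of $D_t^{\frac12\si_1}$ composed with the map $\dot H^{\si_1+\si_2}_p \to \dot H^{\si_2}_p$ (which in Fourier-in-$x$ language is multiplication by $|\xi|^{-\si_1}$, i.e.\ exactly the drop by $\si_1$ spatial derivatives) is a product of a clean homogeneous multiplier in $\tau$ of order $\si_1$ and one in $\xi$ of order $-\si_1$; the combined symbol $|\tau|^{\si_1}|\xi|^{-\si_1}$ (up to the bounded factors $a_{1-\si_1}, b_{1-\si_1}$ and $\mathrm{sign}\,\tau$) is an anisotropically homogeneous symbol of order $0$ with respect to the parabolic scaling, so $L^q_tL^p_x$ boundedness follows from the anisotropic Mihlin multiplier theorem (or from iterated application of the scalar Mihlin theorem in $\tau$ with $\dot H^{\si_2}_p({\mathbb R}^n)$ as the Banach target, which is a UMD space, so vector-valued multiplier theory applies). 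The claim for $\De^{\frac12\si_1}$ is proved the same way with the roles of $\tau$ and $\xi$ swapped: $\De^{\frac12\si_1}: \dot H^{\si_1+\si_2}_p\to\dot H^{\si_2}_p$ is bounded for each fixed $t$ by definition of homogeneous Sobolev spaces, and $D_t^{\frac12\si_2}\De^{\frac12\si_1}f$ has symbol $|\xi|^{\si_1}|\tau|^{\si_2}$ acting on $f$ whose transform is controlled by $|\tau|^{\frac12(\si_1+\si_2)} + |\xi|^{\si_1+\si_2}$ in the two norm pieces — again an order-$0$ parabolic multiplier after pairing, so Mihlin closes it.

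For part (ii), I would reduce to part (i) using the even reflection $\phi\mapsto\tilde\phi$ introduced in \eqref{CK-july22-100} together with the adjoint structure of $D^{*\si}_t$. Given $f\in\dot H^{\si_1+\si_2,\frac12\si_1+\frac12\si_2}_{pq*}(Q_T)$, so that $\mathrm{supp}\,f\subset\overline{{\mathbb R}^n_+}\times[0,T)$, I form $\tilde f$, which by the remark following \eqref{CK-july22-100} lies in $\dot H^{\si_1+\si_2,\frac12\si_1+\frac12\si_2}_{pq}({\mathbb R}^{n+1})$ with comparable norm. The key identity is that $D^{*\si}_t$, which by the computation in Section~2 equals $D^\si_t(A_{1-\si}\,\cdot\,+B_{1-\si}H\,\cdot\,)$, interacts with even reflection in a controlled way: on functions supported in $t>0$, $J_{1-\si}f(t)=\frac{1}{\Ga(1-\si)}\int_t^T\frac{f(s)}{(s-t)^{\si}}\,ds$ can be compared with the restriction of the full-line operator applied to $\tilde f$, up to Hilbert-transform terms that are themselves bounded on $L^q_tL^p_x$ (since $H$ is a Calder\'on--Zygmund operator and the target spaces are UMD). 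Concretely, $\De^{\frac12\si_1}$ acts only in $x$ so it commutes with the reflection and the estimate is immediate from part (i) restricted to the half-line. For $D^{*\frac12\si_1}_t$ I write it via the $A,B,H$ decomposition, apply part (i) to $\tilde f$ to handle the $D^{\frac12\si_1}_t$ piece, and absorb the Hilbert-transform contributions using their $L^q_tL^p_x$-boundedness, finally restricting back to $Q_T$; the support condition $\mathrm{supp}\,f\subset\overline{{\mathbb R}^n_+}\times[0,T)$ guarantees that $D^{*\frac12\si_1}_tf$ is again supported in $[0,T)$ (the dual fractional integral $J$ integrates forward in time), so the output genuinely lies in $\dot H^{\si_2,\frac12\si_2}_{pq*}(Q_T)$.

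The main obstacle I anticipate is the honest justification of the vector-valued multiplier step when one of $\si_1,\si_2$ is $0$ or when $\si_1+\si_2=1$: at those endpoints the parabolic homogeneity is still fine, but the Mihlin symbol $|\tau|^{\si_1}|\xi|^{-\si_1}$ is only bounded, not smooth, across $\{\tau=0\}$ or $\{\xi=0\}$, so I must be careful that the relevant derivative bounds $|\tau^k\partial_\tau^k m|\le C$ (and the mixed $x$-$t$ versions) actually hold — they do, because $m$ is homogeneous of degree $0$ in the parabolic dilation and smooth away from the axes, and the $\dot H^{\si_2}_p$-valued Mihlin theorem only needs derivatives in $\tau$. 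A second, more bookkeeping-level obstacle is making the reflection argument in part (ii) fully rigorous for fractional (non-integer) $\si$, since even reflection does not commute with $D^\si_t$ exactly; the cleanest fix is to do everything by duality — test against $g\in H^{-\si_2,-\frac12\si_2}_{p'q',0}(Q_T)$, move $D^{*\frac12\si_1}_t$ onto $g$ as $D^{\frac12\si_1}_t$ via the stated integration-by-parts identity $\int D^\si_tf\,g=-\int f\,D^{*\si}_tg$, and invoke part (i) together with the density of $L^q(I;C^\infty_c(\R))$ noted in the remark — thereby never needing to commute reflection past a fractional derivative.
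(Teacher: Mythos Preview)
Your overall strategy matches the paper's: Fourier multipliers for (i), then the even reflection $\phi\mapsto\tilde\phi$ together with the identity $D_t^{*\si}=D_t^\si(A_{1-\si}+B_{1-\si}H)$ for (ii). Your outline of (ii) is essentially the paper's argument --- they apply (i) to $\tilde\phi$, note that $D_t^{*\frac12\si_1}\tilde\phi=D_t^{*\frac12\si_1}\phi$ on $Q_T$ because $J_\si$ integrates forward in time, and restrict.

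The gap is in the cross term of part (i). You claim that the symbol $|\tau|^{\frac12\si_1}|\xi|^{-\si_1}$ (presumably what you meant by ``$|\tau|^{\si_1}|\xi|^{-\si_1}$'') is an $L^q_tL^p_x$-multiplier because it is parabolically homogeneous of degree $0$. This is false: degree-$0$ homogeneity does \emph{not} imply boundedness, and your symbol blows up as $|\xi|\to 0$ for fixed $\tau\neq 0$ (the model case is $|\tau|/|\xi|^2$). In your own ``obstacles'' paragraph you describe it as ``only bounded, not smooth'' --- but it is not bounded at all. Equivalently, the vector-valued statement you invoke, namely $D_t^{\frac12\si_1}:L^q_t(\dot H^{\si_1+\si_2}_p)\to L^q_t(\dot H^{\si_2}_p)$, cannot hold: a positive-order fractional time derivative is never bounded from $L^q_t(X)$ to $L^q_t(X)$, and the spatial isomorphism $\dot H^{\si_1+\si_2}_p\cong\dot H^{\si_2}_p$ does not help. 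The underlying mistake is that you are trying to control the mixed piece $\De^{\frac12\si_2}D_t^{\frac12\si_1}f$ using only the spatial half $\int\|f\|^q_{\dot H^{\si_1+\si_2}_p}dt$ of the source norm; no single half suffices.

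The missing ingredient is to use both halves of the source norm simultaneously via a factorization. The paper multiplies and divides by the full parabolic symbol:
\[
\widehat{\De^{\frac12\si_2}D_t^{\frac12\si_1}f}(\xi,\tau)
=\frac{|\xi|^{\si_2}\,i\tau\,m_{1-\frac12\si_1}(\tau)}
      {|\xi|^{\si_1+\si_2}+i\tau\,m_{1-\frac12(\si_1+\si_2)}(\tau)}
\Bigl(|\xi|^{\si_1+\si_2}+i\tau\,m_{1-\frac12(\si_1+\si_2)}(\tau)\Bigr)\hat f(\xi,\tau).
\]
The denominator has real part $|\xi|^{\si_1+\si_2}+b_{1-\frac12(\si_1+\si_2)}|\tau|^{\frac12(\si_1+\si_2)}>0$ for $(\xi,\tau)\neq 0$, so the ratio is genuinely bounded, parabolically homogeneous of degree $0$, and satisfies the Lizorkin conditions; hence it \emph{is} an $L^q_tL^p_x$-multiplier \cite{Liz}. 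The remaining factor is precisely $\widehat{\De^{\frac12(\si_1+\si_2)}f}+\widehat{D_t^{\frac12(\si_1+\si_2)}f}$, and each summand is one of the two pieces of the source norm. With this factorization inserted, the rest of your argument goes through.
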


\begin{proof}
We recall by definition that
\begin{equation}\label{CK-july21-200}
\| D_t^{\frac12 \si_1} f\|^q_{\dot H^{\si_2, \frac12 \si_2}_{pq}} =
\int_{{\mathbb R}} (\| D_t^{\frac12 \si_2} D_t^{\frac12 \si_1} f
\|^q_{L^p({\mathbb R}^n)} +  \| \De^{\frac12 \si_2} D_t^{\frac12
\si_1} f \|^q_{L^p({\mathbb R}^n)}) dt.
\end{equation}
Since $D_t^{\frac12 \si_2} D_t^{\frac12 \si_1} f = D_t^{\frac12
\si_1  +\frac12 \si_2} f $, the first term in \eqref{CK-july21-200}
is dominated by $\| f\|^q_{\dot H^{\si_1 + \si_2, \frac12 \si_1 +
\frac12 \si_2}_{pq}} $. On the other hand, we note that
\[
{\mathcal F}(  \De^{\frac12 \si_2} D_t^{\si_1} f)(\xi, \tau) = i\tau
|\xi|^{\si_2}  m_{1-\si_1}(\tau) \hat f(\xi, \tau)
\]
\begin{equation}\label{CK-july21-300}
=\frac{ i\tau |\xi|^{\si_2} m_{1-\si_1}(\tau) }{ |\xi|^{\si_1 +
\si_2}  + i\tau m_{1-\si_1 - \si_2}(\tau)} ( |\xi|^{\si_1 + \si_2} +
i\tau m_{1-\si_1 - \si_2}( \tau)) \hat f.
\end{equation}
Here, ${\mathcal F}$ is the Fourier transform in ${\mathbb R}^{n+1}$.
We note that $\frac{i\tau |\xi|^{\si_2} m_{1-\si_1}(\tau) }{
|\xi|^{\si_1 + \si_2} +i\tau m_{1-\si_1 - \si_2}( \tau)} $ is a
$L^q_tL^p_x$-multiplier (see e.g. \cite[ Theorem A]{Liz}), and thus
the term in \eqref{CK-july21-300} is dominated by $\| f\|^q_{\dot
H^{\si_1 + \si_2, \frac12 \si_1 + \frac12 \si_2}_{pq}} $. Hence, $
D^{\frac12 \si_1}$ is a bounded operators from $\dot H^{\si_1+\si_2,
\frac12 \si_1 + \frac12 \si_2}_{pq}$ to $\dot H^{\si_2, \frac12
\si_2}_{pq}$. Similarly, we can see that $ \De^{\frac12 \si_1}$
bounded operators from $\dot H^{\si_1+\si_2, \frac12 \si_1 + \frac12
\si_2}_{pq}$ to $\dot H^{\si_2, \frac12 \si_2}_{pq}$. This completes
the proof of $(i)$.

For $\phi \in \dot H^{\si_1+\si_2, \frac12 \si_1 + \frac12
\si_2}_{pq*}(Q_T)$, we let $\tilde \phi  \in \dot H^{\si_1+\si_2,
\frac12 \si_1 + \frac12 \si_2}_{pq}({\mathbb R}^{n+1})$ as defined
in \eqref{CK-july22-100}. Then, %by proposition \ref{iterated},
via the result (i), we obtain
\[
\|D_t^{* \frac12\si_1} \tilde \phi\|_{ \dot H^{ \si_2,  \frac12
\si_2}_{pq}({\mathbb R}^{n+1}) } \leq c \|\tilde \phi\|_{ \dot
H^{\si_1+\si_2, \frac12 \si_1 + \frac12 \si_2}_{pq}({\mathbb
R}^{n+1})}  \leq c \|  \phi\|_{ \dot H^{\si_1+\si_2, \frac12 \si_1 +
\frac12 \si_2}_{pq*}(Q_T)}.
\]
Since $D_t^{* \frac12\si_1} \tilde \phi (x,t) = D_t^{* \frac12\si_1}
\phi (x,t)$ for $(x,t) \in Q_T$, we have
\begin{align*}
\|D_t^{* \frac12\si_1} \phi\|_{ \dot H^{ \si_2,  \frac12
\si_2}_{pq*}(Q_T) }  \leq c \|  \phi\|_{ \dot H^{\si_1+\si_2,
\frac12 \si_1 + \frac12 \si_2}_{pq*}(Q_T)}.
\end{align*}
This complete the proof of $(ii)$ and thus we deduce the
proposition.
\end{proof}

%\begin{prop}
%$D_t^{* \frac12\si_1}$  and $\De^{\frac12 \si_1}$ are bounded
%operators from $\dot H^{\si_1+\si_2, \frac12 \si_1 + \frac12
%\si_2}_{pq*}(Q_T)$ to $\dot H^{\si_2, \frac12 \si_2}_{pq*}(Q_T)$.
%\end{prop}
%\begin{proof}
%For $\phi \in \dot H^{\si_1+\si_2, \frac12 \si_1 + \frac12
%\si_2}_{pq*}(Q_T)$, we let $\tilde \phi  \in \dot H^{\si_1+\si_2,
%\frac12 \si_1 + \frac12 \si_2}_{pq}({\mathbb R}^{n+1})$ as defined
%in \eqref{CK-july22-100}. Then, %by proposition \ref{iterated},
%via the result (i), we obtain
%\[
%\|D_t^{* \frac12\si_1} \tilde \phi\|_{ \dot H^{ \si_2,  \frac12
%\si_2}_{pq}({\mathbb R}^{n+1}) } \leq c \|\tilde \phi\|_{ \dot
%H^{\si_1+\si_2, \frac12 \si_1 + \frac12 \si_2}_{pq}({\mathbb
%R}^{n+1})}  \leq c \|  \phi\|_{ \dot H^{\si_1+\si_2, \frac12 \si_1 +
%\frac12 \si_2}_{pq*}(Q_T)}.
%\]
%Since $D_t^{* \frac12\si_1} \tilde \phi (x,t) = D_t^{* \frac12\si_1}
%\phi (x,t)$ for $(x,t) \in Q_T$, we have
%\begin{align*}
%\|D_t^{* \frac12\si_1} \phi\|_{ \dot H^{ \si_2,  \frac12
%\si_2}_{pq*}(Q_T) }  \leq c \|  \phi\|_{ \dot H^{\si_1+\si_2,
%\frac12 \si_1 + \frac12 \si_2}_{pq*}(Q_T)}.
%\end{align*}
%This complete the proof of $(ii)$ and thus we deduce the
%proposition.
%\end{proof}

\begin{remark}
With the aid of duality argument, we also conclude that  $D_t^{
\frac12\si_1}$ and $\De^{\frac12 \si_1}$ are bounded operators from
$\dot H^{-\si_2, -\frac12 \si_2}_{pq,0}(Q_T)$ to $\dot
H^{-\si_1-\si_2, -\frac12 \si_1 - \frac12 \si_2}_{pq,0}(Q_T)$.
\end{remark}
%%%%%%%%%%%%%%%%%
%{\sidenote{\color{blue}Kyungkeun: ...any reason of $\Ga(\cdot -x,
%t-\cdot )$, instead $\Ga(x-\cdot, t-\cdot )$ ?...}}
%%%%%%%%%%%%

\begin{prop}\label{prop-3}
Let $ 0 \leq \al \leq 2$,  $f \in \dot H^{\al -2, \frac12 \al
-1}_{pq,0} (Q_T )$ and
%\begin{align*}
$Tf(x,t): = < f, \Ga(x-\cdot, t-\cdot )>$.
%\end{align*}
Then, $Tf \in \dot H^{\al, \frac12 \al}_{pq} (Q_T )$ and the
following estimate holds:
\begin{align*} %\label{u-j}
\| Tf\|_{\dot H^{\al, \frac12 \al}_{pq} (Q_T )}
  & \leq c  \|  f\|_{\dot H^{\al-2, \frac12 \al-1}_{pq,0} (Q_T )}.
\end{align*}
\end{prop}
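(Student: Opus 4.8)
The plan is to establish the estimate first in the full space $\mathbb{R}^{n+1}$ via Fourier analysis and a multiplier argument, and then transfer it to $Q_T = \mathbb{R}^n \times (0,T)$ by the usual reflection/extension device already set up in the excerpt. Recall that $\Gamma$ denotes the heat kernel on $\mathbb{R}^n$, so $Tf$ is the caloric potential of $f$; when $f$ is supported in $\{t>0\}$ the convolution $<f,\Gamma(x-\cdot,t-\cdot)>$ is exactly the solution of the (scalar) heat equation $u_t-\Delta u = f$ with zero data at $t=0$. So what must be shown is the maximal-regularity-type bound
\[
\| Tf\|_{\dot H^{\al,\frac12\al}_{pq}(Q_T)} \le c\,\|f\|_{\dot H^{\al-2,\frac12\al-1}_{pq,0}(Q_T)}.
\]

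First I would reduce to $\alpha=2$. By Proposition \ref{iterated}(i), the operators $D_t^{\frac12\si}$ and $\Delta^{\frac12\si}$ map $\dot H^{\al,\frac12\al}_{pq}(\mathbb{R}^{n+1})$ boundedly to $\dot H^{\al-\si,\frac12\al-\frac12\si}_{pq}(\mathbb{R}^{n+1})$, and by the Remark after that proposition the same fractional operators map $\dot H^{-\si_2,-\frac12\si_2}_{pq,0}(Q_T)$ to $\dot H^{-\si_1-\si_2,-\frac12\si_1-\frac12\si_2}_{pq,0}(Q_T)$; since $T$ commutes with constant-coefficient Fourier multipliers on the relevant (extended) function, intertwining $T$ with $\Delta^{1-\frac12\al}$ (or a combination of $D_t$ and $\Delta$) reduces the general $0\le\alpha\le2$ case to the endpoint $\alpha=2$, i.e. to the classical statement that $f\in L^q_t L^p_x$ implies $u_t,\nabla^2 u\in L^q_t L^p_x$ with a bound by $\|f\|_{L^q_tL^p_x}$. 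For that endpoint, extend $f$ to $\tilde f$ on $\mathbb{R}^{n+1}$ (it already has support in $\{t\ge 0\}$, so simply extend by zero), compute $\widehat{Tf}(\xi,\tau) = (i\tau + 4\pi^2|\xi|^2)^{-1}\hat{\tilde f}(\xi,\tau)$, and observe that $\tau(i\tau+4\pi^2|\xi|^2)^{-1}$ and $|\xi|^2(i\tau+4\pi^2|\xi|^2)^{-1}$ are parabolic Mikhlin–Lizorkin multipliers on $L^q_tL^p_x$ (this is exactly the situation covered by the Lizorkin-type multiplier theorem cited as \cite[Theorem A]{Liz}, used already in the proof of Proposition \ref{iterated}). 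This gives $\|Tf\|_{\dot H^{2,1}_{pq}(\mathbb{R}^{n+1})}\le c\|\tilde f\|_{L^q_tL^p_x(\mathbb{R}^{n+1})}$, and restricting to $Q_T$ and using $\|\tilde f\|_{L^q_tL^p_x(\mathbb{R}^{n+1})}=\|f\|_{L^q_tL^p_x(Q_T)}$ (together with the inclusion $L^q(I;\dot H^{-\al}_p)\subset \dot H^{\al-2,\frac12\al-1}_{pq,0}$ noted in the Remark) closes the endpoint case.

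The one genuinely delicate point is the appearance of the \emph{dual} space $\dot H^{\al-2,\frac12\al-1}_{pq,0}(Q_T)$ on the right-hand side: a general $f$ there is a distribution, not a function, so "$<f,\Gamma(x-\cdot,t-\cdot)>$" must be justified and the $\alpha<2$ bound cannot be obtained by brute restriction. The way around this is again the reduction just described: write $f = D_t^{\frac12(2-\al)}g_1 + \Delta^{\frac12(2-\al)}g_2$ with $g_1,g_2\in \dot H^{0,0}_{pq,0}(Q_T) = L^q_tL^p_x(Q_T)$ controlled in norm by $\|f\|_{\dot H^{\al-2,\frac12\al-1}_{pq,0}}$ (such a decomposition exists because the fractional operators are, up to the $A_\si,B_\si,H$ corrections, invertible on these scales), push the operators past $T$, and apply the $\alpha=2$ estimate to each $Tg_i$. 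I expect the bookkeeping of the Hilbert-transform correction terms (the $A_\si g+B_\si Hg$ business from the definition of $J_\si$ and $D_t^{*\si}$) in matching $D_t^{*}$ on $Q_T$ with $D_t$ on the reflected function to be the fussiest part, but it is routine given Proposition \ref{iterated} and the density of $L^q(I;C_c^\infty(\mathbb{R}^n))$ in the dual spaces. No interior boundary ($x_n=0$) issues arise here since $Q_T$ is a full space in $x$; the half-space subtleties enter only in the later sections.
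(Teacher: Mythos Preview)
Your outline is correct, but it takes a genuinely different route from the paper's. Both arguments anchor at the $\alpha=2$ endpoint $T:L^q_tL^p_x\to\dot H^{2,1}_{pq}$ (you sketch it via Lizorkin multipliers; the paper simply cites \cite{MS}). Where they diverge is in reaching general $\alpha$. You propose to represent $f\in\dot H^{\alpha-2,\frac12\alpha-1}_{pq,0}$ as $D_t^{\frac12(2-\alpha)}g_1+\Delta^{\frac12(2-\alpha)}g_2$ with $g_i\in L^q_tL^p_x$ (a Hahn--Banach type decomposition of the dual space), commute the fractional operators past $T$, and apply the $\alpha=2$ bound to each $Tg_i$. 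The paper instead first establishes the \emph{opposite} endpoint $\alpha=0$ by a direct \emph{duality} argument: it introduces the adjoint potential
\[
T^*g(y,s)=\int_s^T\!\int_{{\mathbb R}^n}\Ga(x-y,t-s)\,g(x,t)\,dx\,dt,
\]
observes that $T^*:L^{p'q'}(Q_T)\to\dot H^{2,1}_{p'q',*}(Q_T)$ is bounded by the same \cite{MS} estimate, and then for $f$ in the dense class $L^q(I;C_c^\infty)$ computes $\|Tf\|_{L^{pq}}=\sup_{\|g\|=1}\langle f,T^*g\rangle\le c\,\|f\|_{\dot H^{-2,-1}_{pq,0}}$. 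With $T:\dot H^{-2,-1}_{pq,0}(Q_T)\to L^{pq}(Q_T)$ in hand, the general $\alpha$ follows by writing $D_t^{\frac12\alpha}Tf=T(D_t^{\frac12\alpha}f)$ (and likewise for $\Delta^{\frac12\alpha}$) and invoking the Remark after Proposition~\ref{iterated} on the \emph{source} side. The paper's duality route sidesteps the decomposition entirely and gives a clean density definition of $Tf$ for distributional $f$; your route is more constructive once the representation is granted, but you then have to check that the (non-unique) choice of $g_1,g_2$ produces a well-defined $Tf$, and the $D_t$-versus-$D_t^{*}$ bookkeeping you flag as ``fussy'' is precisely the work the paper's duality absorbs in one line.
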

\begin{proof}
For the case of $f \in L^{pq}_{x,t}(Q_T)$, it was shown in \cite{MS}
that
%P. Marimonti and V. A. Solonnikov \cite{MS} showed that
$Tf \in \dot H^{2, 1}_{pq,0} (Q_T )$ and
\begin{align*}
\int_0^T \bke{\|D^2_x Tf(\cdot,t)\|^q_{L^p({\mathbb R}^n)}   + \|D_t
Tf(\cdot,t)\|^q_{L^p({\mathbb R}^n)}}dt  \leq c \int_0^T
               \|f(\cdot,t)\|^q_{L^p({\mathbb R}^n)} dt.
\end{align*}
With the same argument, we define a bounded operator $T^*: L^{pq}
(Q_T) \ri \dot H^{2, 1}_{pq*} (Q_T )$ by
\begin{align*}
%T^*: L^{pq} (Q_T)  \ri \dot H^{2, 1}_{pq*} (Q_T ), \quad
T^* g(y,s): = \int_s^T \int_{{\mathbb R}^n} \Ga(x-y, t-s) g(x,t)
dxdt.
\end{align*}
Since $\dot H^{-2, -1}_{pq,0} (Q_T)  = ( \dot H^{2, 1}_{p'q'*} (Q_T
) )^*$, for $f \in  L^{q}(I; \R)$, we have via H\"older's equality
\[
\| Tf\|_{L^{pq}(Q_T)} = \sup_{ \|g \|_{ L^{p'q'}(Q_T)} =1} \int \int_{Q_T}
Tf(x,t) g(x,t) dxdt
\]
\[
= \sup_{\|g \|_{ L^{p'q'}(Q_T)} =1} \int  \int_{Q_T} f(x,t) T^* g(x,t)
dxdt
\]
\[
\leq  \sup_{\|g \|_{ L^{p'q'}(Q_T)} =1} \|T^*g\|_{\dot H^{2, 1}_{p'q'*}
(Q_T)}  \|f\|_{ H^{-2, -1}_{pq,0} (Q_T)}
\]
\[
\leq c \sup_{\|g \|_{ L^{p'q'}(Q_T)} =1} \|g\|_{L^{p'q'} (Q_T)}  \|f\|_{
H^{-2, -1}_{pq,0} (Q_T)}\leq  c  \|f\|_{ H^{-2, -1}_{pq,0} (Q_T)}.
\]
%
%\begin{align*}
%\| Tf\|_{L^{pq}(Q_T)} &  = \sup_{ g \in L^{p'q'}(Q_T)} \int  \int_{Q_T} Tf(x,t) g(x,t) dxdt\\
%              &  = \sup_{ g \in L^{p'q'}(Q_T)} \int  \int_{Q_T} f(x,t) T^* g(x,t) dxdt\\
%               &  \leq  \sup_{ g \in L^{p'q'}(Q_T)} \|T^*g\|_{\dot H^{2, 1}_{p'q'*} (Q_T)}  \|f\|_{ H^{-2, -1}_{pq,0} (Q_T)}\\
%             &  \leq c \sup_{ g \in L^{p'q'}(Q_T)} \|g\|_{L^{p'q'} (Q_T)}  \|f\|_{ H^{-2, -1}_{pq,0} (Q_T)}\\
%      &  \leq  c  \|f\|_{ H^{-2, -1}_{pq,0} (Q_T)}.
%\end{align*}
Since $L^q(I; \R)$ is dense in $\dot H^{-2, -1}_{pq,0} (Q_T)$, it
follows that for all $f \in  \dot H^{-2, -1}_{pq,0} (Q_T)$
\[
\| Tf\|_{L^{pq}(Q_T)} \leq  c  \|f\|_{ H^{-2, -1}_{pq,0} (Q_T)}.
\]
Noting that $D^{\frac12\al}_t T f = T D^{\frac12 \al}_t f$ and
$\De^{\frac12\al} T f = T \De^{\frac12 \al} f$,  by Proposition \ref{iterated}, we get
\[
\int_0^T \| D^{\frac12 \al}_t  Tf \|^q_{L_x^p({\mathbb R}^n)} dt
\leq c  \| D^{\frac12 \al}_t f\|^q_{H^{-2, -1}_{pq,0}(Q_T)}\leq  c
\|  f\|^q_{H^{ \al-2,\frac12 \al -1}_{pq,0}(Q_T)},
\]
\[
\int_0^T \|  \De^{\frac12 \al}  Tf \|^q_{L^p({\mathbb R}^n)} dt\leq
c  \| \De^{\frac12 \al} f\|^q_{H^{-2, -1}_{pq,0}(Q_T)}\leq  c  \|
f\|^q_{H^{ \al-2,\frac12 \al -1}_{pq,0} (Q_T)}.
\]
This completes the proof.
%
%\begin{align*}
%\int_0^T \| D^{\frac12 \al}_t  Tf \|^q_{L_x^p({\mathbb R}^n)} dt & = \int_0^T \|  T  D^{\frac12 \al}_tf \|^q_{L_x^p({\mathbb R}^n} dt\\
%                    & \leq c  \| D^{\frac12 \al}_t f\|^q_{H^{-2, -1}_{pq,0}(Q_T)}\\
%                    & \leq  c  \|  f\|^q_{H^{ \al-2,\frac12 \al -1}_{pq,0}(Q_T)},\\
%\int_0^T \|  \De^{\frac12 \al}  Tf \|^q_{L^p({\mathbb R}^n)} dt & = \int_0^T \|  T  \De^{\frac12 \al} f \|^q_{L_x^p} dt\\
%                    & \leq c  \| \De^{\frac12 \al} f\|^q_{H^{-2, -1}_{pq,0}(Q_T)}\\
%                    & \leq  c  \|  f\|^q_{H^{ \al-2,\frac12 \al -1}_{pq,0} (Q_T)}.
%\end{align*}
\end{proof}

%%%%%%%%%%%%%%%%%
%{\sidenote{\color{blue}Kyungkeun: ...Is it like a proposition or
%just a comment?...}}
%%%%%%%%%%%%

\begin{remark}
For $f \in \dot H^{-\al,-\frac12 \al}_{pq,0}(Q^+_T)$ ($f \in
H^{-\al,-\frac12 \al}_{pq,0}(Q^+_T))$, we define $\tilde f \in \dot
H^{-\al, -\frac12 \al}_{pq,0}(Q_T)$ ($\tilde f \in H^{-\al, -\frac12
\al}_{pq,0}(Q_T)$ ) by
\begin{align*}
< \tilde f, \phi>  =< f, \phi|_{Q^+_T}>, \quad \phi \in \dot
H^{-\al, -\frac12 \al}_{pq,0}(Q_T) ( \phi \in H^{-\al, -\frac12
\al}_{pq,0}(Q_T) )
\end{align*}
and it is immediate that $\| \tilde f\|_{\dot H^{\al, \frac12
\al}_{pq}(Q_T) } \leq c \| f\|_{\dot H^{\al, \frac12
\al}_{pq*}(Q^+_T)}$
 ( $\| \tilde f\|_{ H^{\al, \frac12 \al}_{pq}(Q_T)  } \leq c \| f\|_{ H^{\al, \frac12 \al}_{pq*}(Q^+_T)}$ ).
Therefore, by proposition \ref{prop-3}, we have
\begin{align*}
\| T \tilde f\|_{\dot H^{\al, \frac12 \al}_{pq} (Q_T^+)} \leq  \| T\tilde f \|_{\dot H^{\al, \frac12 \al}_{pq} (Q_T)} \leq c \| \tilde f \|_{\dot H^{\al -2, \frac12 \al -1}_{pq, 0} (Q_T)} \leq c  \|  f \|_{\dot H^{\al -2, \frac12 \al -1}_{pq, 0} (Q^+_T)}
\end{align*}
for $f \in \dot H^{\al -2, \frac12 \al -1}_{pq, 0} (Q^+_T)$.
\end{remark}

%%%%%%%%%%%%%%%%%%%%%%%%%%%%%%%%%%%%%%%%%%%%%%%%%%%%%%%%%%%%%%%%%%%%%%%%%%%%%%%%%%%%%%%
%%%%%%%%%%%%%%%%%%%%%%%%%%%%%%%%%%%%%%%%%%%%%%%%%%%%%%%%%%%%%%%%%%%%%%%%%%%%%%%%%%%%%%%
%%%%%%%%%%%%%%%%%%%%%%%%%%%%%%%%%%%%%%%%%%%%%%%%%%%%%%%%%%%%%%%%%%%%%%%%%%%%%%%%%%%%%%%

\section{%Solution formula of the Stokes system in ${\mathbb R}^n_+\times (0,\infty)$
Stokes system with $f\neq 0$ and $v_0=0$} \label{kernal}
\setcounter{equation}{0}

In this section, we consider the Stokes system with nonhomogeneous
external force and zero initial data in ${\mathbb R}^n_+\times
(0,\infty)$, namely
\begin{equation}\label{C10K-april7}
%\left\{
%\begin{array}{ll}
%& v_t - \nu\De v + \na p =f, \qquad {\rm div} \, v =0,\\
%\vspace{-3mm}\\
%& v|_{t =0} =0, \qquad v|_{x_n =0} =0
%\end{array}
%\right. \quad\mbox{ in }\,\,\quad Q_T:=\R_+ \times [0,T)
v_t - \De v + \na p =f, \qquad {\rm div} \, v =0 \qquad \mbox{ in
}\,\,Q^+_T:=\R_+ \times [0,T)
\end{equation}
\begin{equation}\label{C20K-april7}
v(x,0)=0\quad \mbox{ and }\quad v(x,t)=0,\quad x\in\partial\R_+=\Rn.
\end{equation}
As mentioned before, we then study the homogeneous Stokes system
non-zero initial data, i.e. $f=0$ and $v_0\neq 0$ in a half space
next section, and by combining both cases we can obtain the desired
estimates, since Stokes system is linear.
%Since the proofs are exactly same, we only prove the case of $f \in
%\dot H^{\al -2, \frac12 \al -1}_{p,0} ({\mathbb R}_+^n \times I)$.
%We note that since $C^\infty_c ({\mathbb R}^n_+ \times I)$ is dense
%subspace of $\dot H^{\al -2, \frac12 \al -1}_{p,0} ({\mathbb R}_+^n
%\times I)$, we assume without loss of generality that $f \in
%C^\infty_c ({\mathbb R}^n_+ \times I) $ with ${\rm div} \, f =0$,
%and we then perform a priori estimates.
For simplicity, we denote by $E(x)$ and $\Ga(x,t)$ the fundamental
solutions of the Laplace equation and the heat equation,
respectively, that is,
\begin{align*}
E(x) = \frac1{2\pi} \log \, |x| \quad \mbox{if}\,\,\,
n=2,\qquad\quad E(x) = - \frac1{(n-2)\om_n |x|^{n-2}} \quad
\mbox{if} \,\,\, n > 2,
\end{align*}
where $\om_n$ is the area of the unit sphere in ${\mathbb R}^n$ and
\begin{align*}
\Ga(x,t) &= (4\pi t)^{-\frac{n}{2}} e^{-\frac{|x|^2}{4t}} \chi_{t >
0}.
\end{align*}
We recall that it was shown in \cite{So} that if the boundary data $
f$ is in $ L^q(I; C^\infty_c(\R_+))$, then the system
\eqref{C10K-april7}-\eqref{C20K-april7} has the following solution
formulae:
\begin{equation}\label{expression-v}
v (x,t) =\int_0^t \int_{{\mathbb R}^n_+} {\mathcal G}(x,y, t-\tau)
f(y,\tau) dyd\tau,
\end{equation}
\begin{equation}\label{expression-p}
p(x,t) =\int_0^t \int_{{\mathbb R}^n_+} {\mathcal P}(x,y, t-\tau)
\cdot f(y,\tau)dyd\tau,
\end{equation}
%\begin{align}\label{expression}
%v (x,t) & =\int_0^t \int_{{\mathbb R}^n_+} {\mathcal G}(x,y, t-\tau) f(y,\tau) dyd\tau,\\
% p(x,t) & =\int_0^t \int_{{\mathbb R}^n_+} {\mathcal P}(x,y,
%t-\tau) \cdot f(y,\tau)dyd\tau.
%\end{align}
where the matrix ${\mathcal G} = (G_{ij})_{1 \leq i,j \leq n} $ and
the vector ${\mathcal P} = (P_i)_{1 \leq i \leq n}$ are given as
\begin{equation}\label{formulas-v}
G_{ij}= \de_{ij} (\Ga(x-y, t) - \Ga(x-y^*,t )) + 4(1 -\de_{jn})
\frac{\pa}{\pa x_j} \int_0^{x_n} \int_{{\mathbb R}^{n-1}}
            \frac{\pa E(x-z)}{\pa x_i} \Ga(z -y^* , t) dz,
\end{equation}
\[
P_j(x,y,t)=4 (1 - \de_{jn}) \frac{\pa }{\pa x_j}\Big[ \int_{{\mathbb
R}^{n-1}} \frac{\pa E(x' - z', x_n)}{\pa x_n} \Ga(z' -y', y_n,t) dz'
\]
\begin{equation}\label{formulas-p}
+\int_{{\mathbb R}^{n-1}} E(x' -z',x_n) \frac{\pa \Ga(z'-y', y_n,
t)}{\pa y_n} dz'\Big].
\end{equation}
Here $x'$ stands for $(x_1, \cdots, x_{n-1})$ and $x^* = (x', -x_n)$
indicates the point symmetric to $x$ with respect to the plane $x_n
= 0$. Since $C_c^\infty(Q_T^+)$ is dense subset of $H^{\al-2,
-\frac12 \al -1}_{pq,0}(Q_T^+)$, we may assume that $f $ is in
$C_c^\infty(Q_T^+)$ such that the solution $(v,p)$ of system
\eqref{C10K-april7}-\eqref{C20K-april7} are represented by
\eqref{formulas-v} and \eqref{formulas-p}. Using the formula
\eqref{expression-v} of $v$ and formula \eqref{expression-p} of $p$
, we will prove the following estimates:
\begin{equation}\label{C30K-april7}
\| v\|_{H^{\al, \frac12 \al}_{pq} (\R_+ \times (0,T))} + \| p\|_{L^q
((0,T); H^{\al-1}_{p} (\R_+  ))}  \leq c \| f\|_{H^{\al-2, \frac12
\al-1}_{pq,0} ({\mathbb R}^n_+ \times (0,T))},
\end{equation}
\begin{equation}\label{C40K-april7}
\| v\|_{\dot H^{\al, \frac12 \al}_{pq} (\R_+ \times (0,T))} +\|
p\|_{L^q ((0,T); \dot H^{\al-1}_{p} (\R_+)) }\leq c \| f\|_{\dot
H^{\al-2, \frac12 \al-1}_{pq,0} ({\mathbb R}^n_+ \times (0,T))}.
\end{equation}
Since the way of proofs for above estimates are similar, we consider
only the case of \eqref{C40K-april7}. We start with the estimate of
velocity field.

\subsection{Estimates of velocity fields}\label{velocity}
For convenience, for a measurable function $g$ in $\mathbb R^n_+$ we
define
\begin{equation}\label{C50K-april7}
\calI g(x',x_n) := \int_{{\mathbb R}^{n-1}} E(x' -y', 0) g(y',x_n)
dy'.
\end{equation}
From the formula \eqref{expression-v} and the functional
\eqref{C50K-april7}, we decompose $v_i$ as follows:
\begin{equation}\label{C60K-april7}
v_i(x,t) = u_i(x,t) + 4
    \frac{\pa }{\pa x_i} w(x,t)
         -4\de_{in} \calI \sum_{1 \leq j \leq n-1} \frac{\pa}{\pa x_j}  {\mathcal U}^*
         f_j(x,t),\qquad i=1,\cdots,n,
\end{equation}
where $u_i$, $w$ and ${\mathcal U}^* f_j$ are defined by
\begin{equation}\label{C70K-april7}
u_i(x,t) :=\int_0^t \int_{{\mathbb R}^n_+} (\Ga(x-y, t-\tau)
-\Ga(x-y^*, t-\tau) )f_i(y, \tau) dy d\tau,
\end{equation}
\begin{equation}\label{C80K-april7}
w(x,t) := \int_0^{x_n} \int_{{\mathbb R}^{n-1}} E(x-y) \sum_{1 \leq
j \leq n-1} \frac{\pa}{\pa y_j} {\mathcal U}^* f_j(y,t) dy,
\end{equation}
\begin{equation}\label{C90K-april7}
{\mathcal U}^* f_j(x,t) := \int_0^t \int_{{\mathbb R}^n_+}\Ga(x-y^*,
t-\tau) f_j(y, \tau) dy d\tau.
%\\
%\calI g(x',x_n) & = \int_{{\mathbb R}^{n-1}} E(x' -y', 0) g(y',x_n)
%dy'.
\end{equation}
%Here we only show the case of the estimate \eqref{

We consider separately the above terms and first estimate $u_i$ for
$i=1,\cdots,n$.

\begin{lemm}\label{lemma-1}
Let $0\leq \al \leq 2$ and $0<T<\infty$. Suppose that $f\in H^{\al
-2, \frac\al2 -1}_{pq,0}(\R_+ \times (0,T))$ with ${\rm{div}}\, f
=0$ in the sense of distributions. If $u_i$ is given in
\eqref{C70K-april7} for $i=1,\cdots,n$, then
\begin{align*} %\label{u-j}
\| u_i\|_{\dot H^{\al, \frac12 \al}_{pq} ({\mathbb R}^n_+ \times
(0,T))} & \leq c  \|  f_i\|_{\dot H^{\al-2, \frac12 \al-1}_{pq,0}
({\mathbb R}^n_+ \times (0,T))}.
\end{align*}
\end{lemm}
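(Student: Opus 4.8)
The plan is to reduce the estimate to the known Stokes‐operator / heat‐kernel bounds by recognizing $u_i$ as an odd reflection of a whole‐space heat potential. Concretely, fix $i$ and set $g_i = f_i$ extended to all of $\mathbb R^n$ by odd reflection across $\{x_n=0\}$, i.e. $g_i(x',x_n) = f_i(x',x_n)$ for $x_n>0$ and $g_i(x',x_n) = -f_i(x',-x_n)$ for $x_n<0$ (applied in the distributional sense, interpreting $f_i$ via its class in $\dot H^{\al-2,\frac12\al-1}_{pq,0}$). Then, writing $\Ga(x-y,t-\tau)-\Ga(x-y^*,t-\tau)$ as the Green kernel for the heat equation with Dirichlet data on the half‐space, one checks that
\[
u_i(x,t)=\int_0^t\int_{\mathbb R^n}\Ga(x-y,t-\tau)\,g_i(y,\tau)\,dy\,d\tau \quad\text{for }x_n>0,
\]
that is, $u_i = (T\tilde g_i)\big|_{Q_T^+}$ where $T$ is the whole‐space heat potential of Proposition \ref{prop-3} and $\tilde g_i$ is the time‐symmetric extension of $g_i$. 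The first step is therefore to make this identity precise at the level of distributions and to verify that odd reflection is a bounded map $\dot H^{\al-2,\frac12\al-1}_{pq,0}(Q_T^+)\to\dot H^{\al-2,\frac12\al-1}_{pq,0}(Q_T)$ for $0\le\al\le2$; since $\al-2\in[-2,0]$ this is a statement about negative‐order spaces, handled by duality from the (elementary) boundedness of even/odd extension on the dual positive‐order spaces $\dot H^{2-\al}_{p'}(\mathbb R^n_+)$, combined with the time‐reflection bound already recorded in the remark after Proposition \ref{prop-3}.

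Once the identity $u_i = (T\tilde g_i)\big|_{Q_T^+}$ is established, the conclusion is immediate: by Proposition \ref{prop-3},
\[
\|T\tilde g_i\|_{\dot H^{\al,\frac12\al}_{pq}(Q_T)}\le c\,\|\tilde g_i\|_{\dot H^{\al-2,\frac12\al-1}_{pq,0}(Q_T)}\le c\,\|f_i\|_{\dot H^{\al-2,\frac12\al-1}_{pq,0}(Q_T^+)},
\]
and restriction to $Q_T^+$ only decreases the norm (by definition of the half‐space spaces as restriction spaces, as in the remark following Proposition \ref{prop-3}). This gives exactly
\[
\|u_i\|_{\dot H^{\al,\frac12\al}_{pq}(\mathbb R^n_+\times(0,T))}\le c\,\|f_i\|_{\dot H^{\al-2,\frac12\al-1}_{pq,0}(\mathbb R^n_+\times(0,T))}.
\]
Since $f\in C^\infty_c(Q_T^+)$ may be assumed by density, all manipulations are justified classically and then extended by continuity.

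The main obstacle I anticipate is the reflection step: verifying that the half‐space Green kernel $\Ga(x-y,t-\tau)-\Ga(x-y^*,t-\tau)$ integrated against $f_i$ over $\mathbb R^n_+$ genuinely coincides with the full‐space heat potential of the odd extension $g_i$ — this uses $\Ga(x-y^*,\cdot)=\Ga(x^*-y,\cdot)$ and a change of variables $y\mapsto y^*$ — together with the fact that odd extension is bounded on the relevant negative‐order anisotropic spaces with a constant independent of $T$ (needed for the homogeneous statement). The time variable causes no trouble because the kernel is a function of $t-\tau$ and the spaces are defined by integrating a spatial norm in $t$; the only subtlety is keeping track of the $D_t^{\frac12\al}$ component, which commutes with $T$ (as noted in the proof of Proposition \ref{prop-3}) and with the spatial reflection. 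Note also that the divergence‐free hypothesis on $f$ is \emph{not} needed for this particular lemma — $u_i$ is estimated componentwise — so it can be ignored here and invoked only later when the terms $w$ and $\calI\,\mathcal U^*f_j$ are combined.
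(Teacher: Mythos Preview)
Your proposal is correct and follows essentially the same approach as the paper: write $u_i$ as the restriction to $Q_T^+$ of the whole-space heat potential $T$ applied to the odd spatial extension of $f_i$ (the paper writes this odd extension as $\tilde f_i - \tilde f_i^*$, with $\tilde f_i$ the zero extension and $f^*(y)=f(y',-y_n)$), then invoke Proposition~\ref{prop-3} together with the boundedness of extension/reflection on the negative-order spaces. One minor correction: no time-symmetric extension is needed here, since Proposition~\ref{prop-3} already acts on $Q_T=\mathbb R^n\times(0,T)$; the only extension involved is spatial, from $Q_T^+$ to $Q_T$, exactly as in the remark following Proposition~\ref{prop-3}.
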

\begin{proof}
Let $I=(0,T)$. We note that since $L^q(I; C^\infty_c ({\mathbb
R}^n_+ ))$ is dense subspace of $L^q(I;\dot H^{\al -2}_{p,0}
({\mathbb R}_+^n ))$, we assume without loss of generality that $f
\in L^q(I; C^\infty_c ({\mathbb R}^n_+ )) $ with ${\rm div} \, f
=0$, and we then perform a priori estimates. Let   $f \in L^q(I;
C^\infty_c ({\mathbb R}^n_+ ))$ and $\tilde f$ be a zero extension
of $f$. Then, for $(x,t) \in {\mathbb R}^n_+ \times (0,T)$
\begin{align*}
u_i(x,t) &=\int_0^t \int_{{\mathbb R}^n} \Big( \Ga(x-y, t-\tau) -
\Ga(x-y^*, t-\tau) \Big)
              \tilde f_i(y, \tau) dy d\tau\\
            &=\int_0^t \int_{{\mathbb R}^n} \Ga(x-y, t-\tau) \Big( \tilde f_i(y', y_n, \tau)
                            - \tilde f^*_i(y', y_n, \tau) \Big)
                             dy d\tau
   := T(\tilde f - \tilde f^*),
\end{align*}
where $ f^*(y) = f(y', -y_n)$. Hence, from proposition \ref{prop-3},
we have
\[
\| u_i\|_{\dot H^{\al, \frac12 \al}_{pq} ({\mathbb R}^n_+ \times
(0,T))}=\| T ( \tilde f_i -\tilde f^*_i)\|_{\dot H^{\al,
\frac12 \al}_{pq} ({\mathbb R}^n \times (0,T))}
%\]
%\[
\leq c  \| \tilde f_i -\tilde f^*_i\|_{\dot H^{\al-2, \frac12
\al-1}_{pq,0} ({\mathbb R}^n \times (0,T))}
\]
\begin{equation}\label{U-*}
\leq c  \| \tilde f_i \|_{\dot H^{\al-2, \frac12 \al-1}_{pq,0}
({\mathbb R}^n \times (0,T))}\leq c  \|  f_i\|_{\dot H^{\al-2,
\frac12 \al-1}_{pq,0} ({\mathbb R}^n_+ \times (0,T))}.
\end{equation}
%
%\begin{align}\label{U-*}
%\notag \| u_j\|_{\dot H^{\al, \frac12 \al}_p ({\mathbb R}^n_+ \times
%(0,T))}
%  & =\| {\mathcal U} ( \tilde f_j -\tilde f^*_j)\|_{\dot H^{\al, \frac12 \al}_p ({\mathbb R}^n \times (0,T))}\\
%  & \leq c  \| \tilde f_j -\tilde f^*_j\|_{\dot H^{\al-2, \frac12 \al-1}_p ({\mathbb R}^n \times (0,T))}\\
% \notag & \leq c  \|  f_j\|_{\dot H^{\al-2, \frac12 \al-1}_{p,0} ({\mathbb R}^n_+ \times (0,T))}.
%\end{align}
This
completes the proof.
%Hence, by \eqref{U-*}, we get
%\begin{align*}
%\notag \| u_j\|_{\dot H^{\al, \frac12 \al}_p ({\mathbb R}^n_+ \times (0,T))}
%  & \leq 2\| {\mathcal U} (\tilde f_j -\tilde f_j^*)\|_{\dot H^{\al, \frac12 \al}_p ({\mathbb R}^n \times (0,T))}\\
%  & \leq c  \| \tilde f_i\|_{\dot H^{\al-2, \frac12 \al-1}_p ({\mathbb R}^n \times (0,T))}\\
% \notag & \leq c  \|  f_i\|_{\dot H^{\al-2, \frac12 \al-1}_{p,0} ({\mathbb R}^n_+ \times (0,T))}.
%\end{align*}
%Similarly, we get \eqref{u-j}.
\end{proof}

\begin{lemm}\label{lemma-2}
Let the assumption in Lemma \ref{lemma-1} hold. If $w$ is given in
\eqref{C80K-april7}, then for $1 \leq i \leq n$
\begin{align}\label{v-w}
\|\frac{\pa }{\pa x_i} w\|_{\dot H^{\al,\frac12 \al}_{pq}({\mathbb
R}^n_+\times (0,T))} \leq c \sum_{1 \leq j \leq n-1} \| f_j\|_{\dot
H^{\al-2,\frac12 \al-1}_{pq,0}({\mathbb R}^n_+\times (0,T))}.
\end{align}
\end{lemm}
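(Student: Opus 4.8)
The key structural observation is that $w$, defined in \eqref{C80K-april7}, is built by applying the Newtonian-potential-type operator $\int_0^{x_n}\int_{\Rn}E(x-y)(\cdot)\,dy$ to the quantity $\sum_{1\le j\le n-1}\frac{\pa}{\pa y_j}{\mathcal U}^*f_j$, which in turn is a convolution of the heat kernel (reflected across $x_n=0$) with $f_j$. My plan is to first rewrite $w$ so that the solid-integral-in-$x_n$ operator $\int_0^{x_n}$ combines with the Newtonian potential $E$ in the $x'$-variables to give (up to a harmless error supported on the boundary) the full Newtonian potential in $\R$; concretely, $\frac{\pa}{\pa x_n}\big[\int_0^{x_n}\int_{\Rn}E(x-y)g(y)\,dy\big]$ and the tangential derivatives reproduce second-order Riesz-type operators in $x'$ acting on $g$, while $\De'\int_0^{x_n}\int_{\Rn}E(x-y)g\,dy = \int_0^{x_n}\int_{\Rn}\De' E\, g$, which by the relation $\De E = \de$ reduces to $g$ plus a vertical integral of $\pa_n^2 E$. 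The upshot I expect is an identity of the schematic form $\frac{\pa}{\pa x_i}w = R_{ij}\,{\mathcal U}^*f_j + (\text{boundary/harmonic correction})$, where $R_{ij}$ is a bounded singular integral operator (a product of Riesz transforms) in the tangential variables acting only on the values at $x_n$, plus possibly one extra vertical integration against a Poisson-type kernel.

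Second, I would pass everything through the machinery already set up in Section 2. The term ${\mathcal U}^*f_j$ is exactly $T(\tilde f_j^*)$-type object: it is the heat potential $T$ applied to the reflected extension of $f_j$, so by Proposition \ref{prop-3} together with the reflection remark following it, $\|{\mathcal U}^*f_j\|_{\dot H^{\al,\frac12\al}_{pq}(Q_T^+)}\le c\|f_j\|_{\dot H^{\al-2,\frac12\al-1}_{pq,0}(Q_T^+)}$. Then the tangential singular-integral operators $R_{ij}$ are classical $L^p_x$-multipliers commuting with $D_t^{\frac12\al}$ and with $\De'$, hence bounded on $\dot H^{\al,\frac12\al}_{pq}$ by the same multiplier theorem (e.g.\ \cite[Theorem A]{Liz}) invoked in the proof of Proposition \ref{iterated}; since they act only in $x'$ they also respect the half-space structure (even/odd extension in $x_n$). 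The vertical correction term — an integral of the form $\int_0^{x_n}K(x_n-z,\,\cdot)\,dz$ against a Poisson/heat-type kernel — should be handled as in the treatment of $\calI$ and the kernel estimates alluded to after \eqref{C90K-april7}; one shows it maps $\dot H^{\al-2,\frac12\al-1}$ boundedly into $\dot H^{\al,\frac12\al}$ by reducing to the one-dimensional Hardy-type / Mikhlin estimate in $x_n$.

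The main obstacle I anticipate is making the first step — the algebraic reduction of $\frac{\pa}{\pa x_i}w$ to tangential Riesz operators applied to ${\mathcal U}^*f_j$ — fully rigorous, in particular controlling the boundary terms produced when one differentiates $\int_0^{x_n}$ and integrates by parts in $y'$, and checking that the divergence-free hypothesis ${\rm div}\,f=0$ is what makes the would-be worst term (the one with $i=n$ or with a full tangential Laplacian landing on $E$) cancel or telescope. I expect the role of ${\rm div}\,f=0$ here to be exactly analogous to its role in Lemma \ref{lemma-1}'s reflected-extension identity: it is needed so that the surviving kernel is a genuine Calderón–Zygmund operator rather than something of lower order with a bad boundary contribution. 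Once that identity is in hand, the estimate \eqref{v-w} follows by composing the bounded maps described above. For the non-homogeneous ($H$ rather than $\dot H$) version one adds the $L^p$-norm term and argues identically using Proposition \ref{prop2}(i).
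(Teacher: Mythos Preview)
Your overall strategy---expressing $\partial_i w$ through bounded singular-integral operators acting on $\mathcal{U}^*f_j$, then invoking Proposition~\ref{prop-3}---matches the paper in spirit, but the execution in the paper is tighter and one of your expectations is wrong.

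Rather than hunting for an explicit identity $\partial_i w = R_{ij}\,\mathcal{U}^*f_j + (\text{correction})$ and checking multiplier conditions directly on $\dot H^{\al,\frac12\al}_{pq}$, the paper observes that for each $t$ the function $w$ solves the Dirichlet problem $\Delta w=\operatorname{div} F$ in $\mathbb{R}^n_+$, $w|_{x_n=0}=0$, with $F_j=-\tfrac12\,\mathcal{U}^*f_j$ for $j\le n-1$ and $F_n=\sum_{j\le n-1}R'_j\,\mathcal{U}^*f_j$. Standard elliptic estimates for this problem give $\|D_x^k w\|_{L^p(\mathbb{R}^n_+)}\le c\sum_j\|\mathcal{U}^*f_j\|_{\dot H^{k-1}_p(\mathbb{R}^n_+)}$ for the integer orders $k=1,2,3$; the fractional spatial regularity then follows by interpolation (Proposition~\ref{prop-1}), and the time regularity by commuting $D_t^{\al/2}$ past the spatial construction and applying the $k=1$ case. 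This bypasses any direct anisotropic multiplier verification.

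The step you flag as the main obstacle---the boundary correction---surfaces in the paper precisely at $k=3$ with all three derivatives normal: one integration by parts leaves the boundary term $D_{x_n}\int_{\mathbb{R}^{n-1}}P_{x_n}(x'-y')\operatorname{div} F(y',0,t)\,dy'$, and this is controlled via the trace inequality of Proposition~\ref{prop-2}, not a one-dimensional Hardy-type argument in $x_n$. If you pursue your route, that is where the care is needed.

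One correction: your expectation that $\operatorname{div} f=0$ is essential here is mistaken. The proof of Lemma~\ref{lemma-2} treats each $f_j$, $1\le j\le n-1$, separately and invokes no cancellation among components; no ``worst term'' needs to telescope. The divergence-free hypothesis enters only later, in the pressure analysis (see the remark following Lemma~\ref{lemma-1-2}).
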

\begin{proof}
We note that $w=0$ on $\{x_n=0\}$ and $w$ solves for each $t$
\begin{equation}\label{CK100-april7}
\De w(x,t)= -\frac12 \sum_{1 \leq j \leq n-1} \frac{\pa}{\pa x_j}
{\mathcal U}^* f_j(x,t)
      + \frac{\pa }{\pa x_n}  \calI \sum_{1 \leq j \leq n-1} \frac{\pa}{\pa x_j} {\mathcal U}^* f_j(x,t)
      \qquad \mbox{ in }\,\,{\mathbb R}^n_+.
\end{equation}
%Note that $w$ satisfies the following equation
%\begin{align*}
%\De w(x,t)& = -\frac12 \sum_{1 \leq j \leq n-1} \frac{\pa}{\pa x_j} {\mathcal U}^* f_j(x,t)
%      + \frac{\pa }{\pa x_n}  I \sum_{1 \leq j \leq n-1} \frac{\pa}{\pa x_j} {\mathcal U}^* f_j(x,t)
%      \qquad x \in {\mathbb R}^n_+,\\
%w|_{x_n =0} & =0.
%\end{align*}
We denote by $R^{'}=(R^{'}_1,\cdots, R^{'}_{n-1})$  the Riesz
transforms in ${\mathbb R}^{n-1}$ and we set
\[
F_j: = -\frac12 {\mathcal U}^*f_j,\qquad j=1\cdots, n-1,
\]
\begin{equation}\label{C100K-april7}
F_n: = \calI\sum_{1 \leq j \leq n-1} \frac{\pa}{\pa x_j} {\mathcal
U}^* f_j(x,t) = \sum_{1 \leq j \leq n-1}R^{'}_j {\mathcal U}^*
f_j(x',x_n,t).
\end{equation}
%
%Let $F_j: = -\frac12 {\mathcal U}^*f_j$ for $1 \leq j \leq n-1$ and
%$F_n: = I\sum_{1 \leq j \leq n-1} \frac{\pa}{\pa x_j} {\mathcal U}^*
%f_j(x,t) = \sum_{1 \leq j \leq n-1}R^{'}_j {\mathcal U}^*
%f_j(x',x_n,t)$, where $R^{'}$   are Riesz transforms in ${\mathbb
%R}^{n-1}$.
Then, the right-hand side of \eqref{CK100-april7} is equal to
$\rm{div}\, F$. By the representation formula of Poisson problem for
Lapalce equation in ${\mathbb R}^n_+$, we have
\begin{align*}
w(x,t) & = -\int_{{\mathbb R}^n_+} (E(x-y) - E(x-y^*) ) div \, F(y,t) dy\\
        & = \int_{{\mathbb R}^n_+} D_y (E(x-y) - E(x-y^*) ) \cdot F(y,t) dy.
\end{align*}
%Note that
%\begin{align}\label{riesz}
%\notag I \sum_{1 \leq j \leq n-1} \frac{\pa}{\pa x_j} {\mathcal U}^* f_j(x,t)
% & = \sum_{1 \leq j \leq n-1}\int_{{\mathbb R}^{n-1}}
%      \frac{\pa E(x' -y',0)}{\pa y_j} {\mathcal U}^* f_j(y',x_n,t) dy'\\
%    & = \sum_{1 \leq j \leq n-1}R_j {\mathcal U}^* f_j(x',x_n,t),
%\end{align}
%where $R_j$ are Riesz transform in ${\mathbb R}^{n-1}$.
Next, we show that for $k=1, 2, 3$
%\[
%\int_0^T\| \na^kw(\cdot, t)\|^p_{L^p({\mathbb R}^n_+)}  dt\leq c
%\int_0^T   \sum_{1 \leq j \leq n} \|\na^{k-1}F_j(\cdot,
%t)\|^p_{L^p({\mathbb R}^n_+)}
%\]
\begin{equation}\label{k-w}
\int_0^T\| D_x^kw(\cdot, t)\|^q_{L^p({\mathbb R}^n_+)}  dt\leq c
%\leq
%c \sum_{1 \leq j \leq n-1} \int_0^T \|\na^{k-1} {\mathcal U}^*
%f_j(\cdot, t)\|^p_{L^p({\mathbb R}^n_+)} dt\leq c
\sum_{1 \leq j \leq n-1} \int_0^T \| {\mathcal U}^* f_j(\cdot,
t)\|^q_{\dot H^{k-1}_p({\mathbb R}^n_+)} dt.
\end{equation}
Indeed, in case that $k=1, 2$, we observe that
\[
\int_0^T\| D_x^kw(\cdot, t)\|^q_{L^p({\mathbb R}^n_+)}  dt\leq c
\int_0^T   \sum_{1 \leq j \leq n} \|\na^{k-1}F_j(\cdot,
t)\|^q_{L^p({\mathbb R}^n_+)}
\]
\begin{equation}\label{CK25-april11}
\leq c \sum_{1 \leq j \leq n-1} \int_0^T \|D_x^{k-1} {\mathcal U}^*
f_j(\cdot, t)\|^q_{L^p({\mathbb R}^n_+)} dt\leq c \sum_{1 \leq j
\leq n-1} \int_0^T \| {\mathcal U}^* f_j(\cdot, t)\|^q_{\dot
H^{k-1}_p({\mathbb R}^n_+)} dt.
\end{equation}
When $k=3$ and $(l,m,i)\neq (n,n,n)$, we have
\[
\int_0^T\| D_{lmi}w(\cdot, t)\|^q_{L^p({\mathbb R}^n_+)}  dt\leq c
\int_0^T \sum_{1 \leq j \leq n} \|D_x^{2}F_j(\cdot,
t)\|^q_{L^p({\mathbb R}^n_+)}
\]
\begin{equation}\label{CK35-april11}
\leq c \sum_{1 \leq j \leq n-1} \int_0^T \|D_x^{2} {\mathcal U}^*
f_j(\cdot, t)\|^q_{L^p({\mathbb R}^n_+)} dt\leq c \sum_{1 \leq j
\leq n-1} \int_0^T \| {\mathcal U}^* f_j(\cdot, t)\|^q_{\dot
H^{2}_p({\mathbb R}^n_+)} dt.
\end{equation}
It remains to estimate the case that $(l,m,i)=(n,n,n)$. We note
first that
\begin{align*}
D_{nnn} w(x,t) & = D_{nn} \int_{{\mathbb R}^n_+} D_{y_n}(E(x-y) + E(x-y^*) )\,{\rm div}  \, F(y,t) dy\\
   & = - D_{nn}\int_{{\mathbb R}^n_+}(E(x-y) + E(x-y^*) )   D_{y_n} \,{\rm div} \,  F(y,t) dy\\
    & \quad + D_{nn} \int_{{\mathbb R}^{n-1}} E(x'-y', x_n) \,{\rm div}  \, F(y',0,t) dy': =J_1 + J_2.
\end{align*}
As in the above cases, we estimate the term $J_1$ as follows:
\begin{equation}\label{CK45-april11}
\int_0^T\|  J_1 (\cdot, t)\|^q_{L^p({\mathbb R}^n_+)}  dt \leq
\int_0^T\|  D_{y_n} \,{\rm div} \,  F(\cdot,t)   \|^q_{L^p({\mathbb
R}^n_+)} dt\leq c \sum_{1 \leq j \leq n-1} \int_0^T \| {\mathcal
U}^* f_j(\cdot, t)\|^q_{\dot H^{2}_p({\mathbb R}^n_+)} dt.
\end{equation}
On the other hand, we rewrite $J_2$ as
\[
J_2(x,t) =  D_n \int_{{\mathbb R}^{n-1}} P_{x_n} (x'-y') \,{\rm div}
\, F(y',0,t) dy',
\]
where $P_{x_n}$ is the Poisson Kernel in a half-space. With the aid
of estimates in proposition \ref{prop-2},
\[
\int_0^T\|   J_2 (\cdot, t)\|^q_{L^p({\mathbb R}^n_+)}  dt \leq
\int_0^T\|  \,{\rm div} \,  F(\cdot,0,t)   \|^q_{\dot B^{1
-\frac1p}_p({\mathbb R}^{n-1})}  dt \leq  \int_0^T\|   F(\cdot,0,t)
\|^q_{\dot B^{2 -\frac1p}_p({\mathbb R}^{n-1})}  dt
\]
\begin{equation}\label{CK55-april11}
\leq  \int_0^T\|   F(\cdot,t)   \|^q_{\dot H^{2}_p({\mathbb
R}_+^{n})}  dt \leq c \sum_{1 \leq j \leq n-1} \int_0^T \| {\mathcal
U}^* f_j(\cdot, t)\|^q_{\dot H^{2}_p({\mathbb R}^n_+)} dt.
\end{equation}
Summing up \eqref{CK25-april11}-\eqref{CK55-april11}, we obtain
\eqref{k-w}. Using \eqref{k-w} and Proposition \ref{prop-1}, we get
for $i=1, \cdots, n$
\[
\int_0^T\|\frac{\pa }{\pa x_i} w(\cdot, t)\|^q_{\dot
H^\al_p({\mathbb R}^n_+)} dt\leq c \sum_{1 \leq j \leq n-1} \int_0^T
\| {\mathcal U}^* f_j(\cdot, t)\|^q_{\dot H^\al_p({\mathbb R}^n_+)}
dt
\]
\begin{equation}\label{w-sobolev}
%\leq c \sum_{1 \leq j \leq n-1} \| {\mathcal U}^* f_j\|^q_{\dot
%H^{\al,\frac12 \al}_{pq}({\mathbb R}^n_+ \times (0,T))}
\leq c \sum_{1 \leq j \leq n-1} \|  f_j\|^q_{\dot H^{\al-2, \frac12
\al-1}_{pq,0} ({\mathbb R}^n_+ \times (0,T))}.
\end{equation}
For the time regularity of $w$,  we have
\begin{align*}
D^{\frac{\al}2}_t  w(x,t) = \int_0^{x_n} \int_{{\mathbb R}^{n-1}}
E(x-y) \sum_{1 \leq j \leq n-1} \frac{\pa}{\pa y_j}
D^{\frac{\al}2}_t {\mathcal U}^* f_j(y,t) dy.
\end{align*}
Applying the above argument, for $1 \leq i \leq n$  we get
\begin{align}\label{t-w}
\notag \int_0^T\|D^{\frac{\al}2}_t \frac{\pa}{\pa x_i}
w\|^q_{L^p({\mathbb R}^n_+)} dt
& \leq c  \sum_{1 \leq j \leq n-1} \int_0^T   \|  D^{\frac{\al}2}_t  {\mathcal U}^* f_j(x,\cdot)\|^q_{L^p ({\mathbb R}^n_+)} dx  \\
& \leq c \sum_{1 \leq j \leq n-1} \|  f_j\|^q_{\dot H^{\al-2,\frac12
\al -1}_{pq,0}}.
\end{align}
Combining \eqref{w-sobolev} and  \eqref{t-w}, we obtain \eqref{v-w}.
This completes the proof.
%\begin{align}\label{v-w}
%\|\frac{\pa }{\pa x_i} w\|_{\dot H^{\al,\frac12 \al}_p({\mathbb R}^n_+\times (0,T))}
% \leq c \sum_{1 \leq j \leq n-1} \| f_j\|_{\dot H^{\al-2,\frac12 \al-1}_{p,0}({\mathbb R}^n_+\times (0,T))}.
%\end{align}
\end{proof}

\begin{lemm}\label{lemma-3}
Let the assumption in Lemma \ref{lemma-1} hold. If ${\mathcal U}^*
f_j$ is given in \eqref{C90K-april7}, then
\begin{align}\label{v-3}
\|   \calI \sum_{1 \leq j \leq n-1} \frac{\pa }{\pa x_j} {\mathcal
U}^* f_j(\cdot,t) \|_{\dot H^{\al,\frac12 \al}_{pq} ({\mathbb R}^n_+
\times I)} \leq c  \sum_{1 \leq j \leq n-1}\|  f_j \|_{\dot
H^{\al-2,\frac12 \al-1}_{pq,0} ({\mathbb R}^n_+ \times I)}.
\end{align}
\end{lemm}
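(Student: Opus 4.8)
**Proof plan for Lemma 3.6 (the estimate for $\calI \sum \partial_{x_j} \calU^* f_j$).**

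The plan is to reduce the estimate to properties of the operator $\calU^* f_j$ defined in \eqref{C90K-april7} and the tangential smoothing operator $\calI$ defined in \eqref{C50K-april7}. First I observe that $\calU^* f_j(x,t)$ is, after reflecting the $y_n$-variable, nothing but $T(\tilde f_j^*)$ restricted to $\R_+$, where $T$ is the heat potential of Proposition \ref{prop-3} and $\tilde f_j^*$ is the zero extension of the reflection $f_j^*(y',y_n) = f_j(y',-y_n)$. Hence Proposition \ref{prop-3} (together with the Remark following it, which transfers the estimate from $Q_T$ to $Q_T^+$) gives
\[
\| \calU^* f_j\|_{\dot H^{\al,\frac12\al}_{pq}(\R_+ \times I)} \le c \| f_j\|_{\dot H^{\al-2,\frac12\al-1}_{pq,0}(\R_+\times I)}.
\]
In particular $\calU^* f_j \in L^q(I; \dot H^{\al}_p(\R_+))$ and, since $\calU^* f_j$ is built from the heat kernel of $y^*$, for each fixed $t$ the function $x \mapsto \calU^* f_j(x,t)$ extends to a function on all of $\R$ that is smooth across $\{x_n=0\}$; this is what will let me commute $\calI$ and tangential derivatives freely.

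Next I would analyze $\calI \sum_{j} \partial_{x_j}\calU^* f_j$. Writing $g := \sum_{j=1}^{n-1} \partial_{x_j}\calU^* f_j$, the operator $\calI g(x',x_n) = \int_{\Rn} E(x'-y',0)\, g(y',x_n)\,dy'$ acts only in the tangential variables $x'$, and on the Fourier side in $x'$ it is a Fourier multiplier of order $-1$ (of homogeneity degree $-1$ in $\xi'$). Combined with the tangential derivative $\partial_{x_j}$, $j \le n-1$, the composite $\calI \partial_{x_j}$ is a zeroth-order tangential multiplier — essentially a tangential Riesz transform, as was already used in \eqref{C100K-april7}. The key point is that such a tangential multiplier is bounded on $L^p(\R)$ and, more relevantly, on $\dot H^{\al}_p(\R_+)$ uniformly, and that it commutes with both $D_t^{\frac\al2}$ and with $\Delta^{\frac\al2}$; one can make this precise by extending to $\R$, applying the Mikhlin–Hörmander / Lizorkin multiplier theorem (as invoked for Proposition \ref{iterated}), and restricting. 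Therefore
\[
\| \calI \sum_j \partial_{x_j}\calU^* f_j\|_{\dot H^{\al,\frac12\al}_{pq}(\R_+\times I)} \le c \sum_j \| \calU^* f_j\|_{\dot H^{\al,\frac12\al}_{pq}(\R_+\times I)},
\]
and chaining this with the $\calU^*$ estimate above yields \eqref{v-3}.

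The main obstacle I anticipate is the interplay between the \emph{non-homogeneous half-space} Sobolev norm $\dot H^{\al}_p(\R_+)$ and the tangential multiplier $\calI\partial_{x_j}$: a priori a tangential operator need not map $\dot H^{\al}_p(\R_+)$ to itself, because functions on $\R_+$ have no canonical extension and the half-space norm is an infimum over extensions. Here, however, the specific structure of $\calU^* f_j$ saves the day — its kernel $\Ga(x-y^*,t-\tau)$ is smooth in $x$ across the boundary, so $\calU^* f_j(\cdot,t)$ has a \emph{distinguished} smooth extension to $\R$ on which the tangential multiplier acts boundedly, and this extension controls the half-space norm. I would therefore spend most of the write-up making this extension argument explicit (for both the spatial $\dot H^{\al}_p$ norm and, after applying $D_t^{\frac\al2}$, the time-regularity part), rather than on the multiplier bound itself, which is standard. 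As in Lemma \ref{lemma-2}, the case $\al$ being possibly $2$ and the presence of three spatial derivatives in intermediate estimates can be handled by the interpolation result Proposition \ref{prop-1} together with Proposition \ref{prop2}(ii), reducing everything to integer-order statements.
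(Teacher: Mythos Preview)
Your proposal is correct and follows essentially the same approach as the paper: both recognize that $\calI\,\partial_{x_j}$ is the tangential Riesz transform $R'_j$ (this is exactly \eqref{C100K-april7}), bound $\calU^* f_j$ in $\dot H^{\al,\frac12\al}_{pq}$ via Proposition~\ref{prop-3}, and commute the spatial operator with $D_t^{\frac{\al}{2}}$ for the time part. The paper handles your ``main obstacle'' (boundedness of the tangential multiplier on the half-space Sobolev scale) more economically than your proposed smooth-extension argument: it simply proves the $L^p(\R_+)$ bound at integer orders $k=0$ and $k=2$ (where the half-space norm is just the $L^p$ norm of derivatives and $R'_j$ acts fiberwise in $x'$) and then invokes Proposition~\ref{prop-1} to interpolate---exactly the fallback you mention in your last sentence.
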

\begin{proof}
Since for $1 \leq j \leq n-1$,  $ D_x^2 \calI  \frac{\pa }{\pa x_j}
{\mathcal U}^* f_j(x,t)=  \calI
        \frac{\pa }{\pa x_j} D_x^2  {\mathcal U}^* f_j(x,t)  $,
recalling \eqref{C100K-april7}, we have
\begin{align}\label{3-2-v}
\int_0^T \| D_x^2 \calI \sum_{1 \leq j \leq n-1} \frac{\pa }{\pa
x_j} {\mathcal U}^* f_j(\cdot,t) \|^q_{L^p ({\mathbb R}^n_+)} dt &
\leq c \sum_{1 \leq j \leq n-1} \int_0^T \|  D_x^2  {\mathcal U}^*
f_j(\cdot,t) \|^q_{L^p ({\mathbb R}^n_+)} dt,
\end{align}
\begin{align}\label{3-0-v}
\int_0^T \|  \calI \sum_{1 \leq j \leq n-1} \frac{\pa }{\pa x_j}
{\mathcal U}^* f_j(\cdot,t) \|^q_{L^p ({\mathbb R}^n_+)} dt & \leq c
\sum_{1 \leq j \leq n-1} \int_0^T \|  {\mathcal U}^* f_j(\cdot,t)
\|^q_{L^p ({\mathbb R}^n_+)} dt.
\end{align}
Via \eqref{3-2-v}, \eqref{3-0-v},  Proposition \ref{prop-1} and
Lemma \ref{lemma-1}, we obtain
\begin{align}
\int_0^T \|  \calI \sum_{1 \leq j \leq n-1} \frac{\pa }{\pa x_j}
{\mathcal U}^* f_j(\cdot,t) \|^q_{\dot H^{\al}_p ({\mathbb R}^n_+)}
dt & \leq c \sum_{1 \leq j \leq n-1} \int_0^T \|  {\mathcal U}^*
f_j(\cdot,t) \|^q_{\dot H^{\al }_{p} ({\mathbb R}^n_+)} dt\\
& \leq c \sum_{1 \leq j \leq n-1} \| f_j\|^q_{ \dot H^{\al
-2,\frac12 \al -1}_{pq,0} ({\mathbb R}^n_+ \times (0,T))}.
\end{align}
For the time regularity, note that $1 \leq j \leq n-1$,
$D^{\frac{\al}2}_t \calI  \frac{\pa }{\pa x_j} {\mathcal U}^*
f_j(x,t)= \calI  \frac{\pa }{\pa x_j} D^{\frac{\al}2}_t {\mathcal
U}^* f_j(x,t) $, recalling \eqref{C100K-april7} and Lemma
\ref{lemma-1}, we have
\begin{align}\label{3-2-v-06-12}
 \int_0^T \| D^{\frac{\al}2}_t \calI \sum_{1 \leq j \leq n-1}
\frac{\pa }{\pa x_j} {\mathcal U}^* f_j(\cdot,t) \|^q_{L^p ({\mathbb
R}^n_+)} dt & \leq c \sum_{1 \leq j \leq n-1}
        \int_0^T \| D^{\frac{\al}2}_t  {\mathcal U}^* f_j(\cdot,t) \|^q_{L^p ({\mathbb R}^n_+)} dt\\
& \leq c\sum_{1 \leq j \leq n-1} \| f_j\|^q_{ \dot H^{\al -2,\frac12
\al -1}_{pq,0} ({\mathbb R}^n_+ \times (0,T))}.
\end{align}
This completes the proof.
\end{proof}
Therefore, with the aid of Lemma \ref{lemma-1}, Lemma \ref{lemma-2}
and Lemma \ref{lemma-3}, we conclude that
\begin{equation}\label{C110K-april7}
\| v\|_{\dot H^{\al, \frac12 \al}_{pq} (\R_+ \times (0,T))} \leq c
\| f\|_{\dot H^{\al -2, \frac12 \al -1}_{pq,0}(\R_+ \times (0,T)) }.
\end{equation}
Following similar procedure, we can show that
\begin{equation}\label{C200K-april10}
\| v\|_{H^{\al, \frac12 \al}_{pq} (\R_+ \times (0,T))} \leq c \|
f\|_{H^{\al -2, \frac12 \al -1}_{pq,0}(\R_+ \times (0,T)) }.
\end{equation}
Since its verification of \eqref{C200K-april10} is almost the same,
we omit the detail.

\subsection{Estimates of pressure} \label{pressure}
We estimate the pressure term. Recalling the formula
\eqref{expression-p}-\eqref{formulas-p}, we split $p$ in two terms,
i.e. $p(x,t) = p_1 (x,t) + p_2(x,t)$, where
\begin{equation}\label{CK10-april8}
p_1(x,t) =4 \sum_{1 \leq j \leq n-1}\int_{{\mathbb R}^{n-1}}
\frac{\pa^2 E(x'-z',x_n)}{\pa x_j \pa x_n} \int_0^t \int_{{\mathbb
R}^n_+} \Ga(z' -y', y_n, t -\tau) f_j(y, \tau) dyd\tau dz',
\end{equation}
\begin{equation}\label{CK20-april8}
p_2(x,t) =4 \sum_{1 \leq j \leq n-1}\int_{{\mathbb R}^{n-1}}
\frac{\pa E(x'-z',x_n)}{ \pa x_j} \int_0^t \int_{{\mathbb R}^n_+}
\frac{\pa \Ga(z' -y', y_n, t-\tau)}{\pa y_n}f_j(y,\tau) dyd\tau dz'.
\end{equation}

\begin{lemm}\label{lemm-1-1}
Let the assumption in Lemma \ref{lemma-1} hold. If $p_1$ is given in
\eqref{CK10-april8}, then
\begin{align}
\| p_1\|_{ L^q((0,T); \dot H^{\al-1}_p ({\mathbb R}^n_+))} & \notag\leq c   \|  f \|_{\dot H^{\al-2,\frac12
\al-1}_{pq,0} ({\mathbb R}^n_+ \times I)}.
\end{align}
\end{lemm}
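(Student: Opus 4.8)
\textbf{Proof plan for Lemma \ref{lemm-1-1}.}
The strategy is to recognize the $z'$-integral inside \eqref{CK10-april8} as essentially a single-layer (Newtonian) potential operator acting on a one-dimensional family of functions, and to reduce the estimate to the velocity-type bounds already available. First I would rewrite $p_1$ by pulling the time–space convolution with $\Ga$ out: set $g_j(z',y_n,t):=\int_0^t\int_{\Rn}\Ga(z'-y',y_n,t-\tau)f_j(y',y_n,\tau)\,dy'd\tau$, integrated also over $y_n\in(0,\infty)$, so that $p_1(x,t)=4\sum_{j\le n-1}\int_{\Rn}\frac{\pa^2 E(x'-z',x_n)}{\pa x_j\pa x_n}\,G_j(z',t)\,dz'$, where $G_j(z',t)=\int_0^\infty g_j(z',y_n,t)\,dy_n$. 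The key point is that $\frac{\pa^2 E}{\pa x_j\pa x_n}$, viewed with the extra variable $x_n$ playing the role of the distance to the boundary, is exactly (up to constants and Riesz transforms in $\Rn$) the kernel of the harmonic extension / Poisson-type operator, so the map $G_j\mapsto p_1(\cdot,x_n,\cdot)$ gains essentially no derivatives but is bounded $\dot B^{s}_p(\Rn)\to \dot H^{s+1/p}_p(\R_+)$ by Proposition \ref{prop-2}(ii) (the harmonic-function version \eqref{CK300-april10}).

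Concretely, the plan is: (1) observe that $p_1(\cdot,t)$ is harmonic in $\R_+$ for each fixed $t$ (it is built from $E$ against a surface-type source), and its trace on $\{x_n=0\}$ is, modulo tangential Riesz transforms, $\sum_j R'_j D_{z_n}\big|_{z_n=0}$-type data coming from $g_j$; hence by \eqref{CK300-april10}, $\|p_1(\cdot,t)\|_{\dot H^{\al-1}_p(\R_+)}\approx \|p_1(\cdot,0,t)\|_{\dot B^{\al-1-1/p}_p(\Rn)}$. (2) Bound the boundary term using the trace estimate \eqref{CK15-april11}: it is controlled by $\sum_j\|\mathcal U^* f_j(\cdot,t)\|_{\dot H^{\al-1}_p(\R_+)}$ or, after using the one extra $y_n$-derivative structure of \eqref{CK10-april8}, by $\sum_j\|\mathcal U^* f_j(\cdot,t)\|_{\dot H^{\al}_p(\R_+)}$ — this is the same object estimated in Lemma \ref{lemma-1}. (3) Integrate in $t$ with exponent $q$, apply Proposition \ref{prop-1} to commute $L^q_t$ with the interpolation/fractional spaces, and invoke the bound $\int_0^T\|\mathcal U^*f_j(\cdot,t)\|^q_{\dot H^\al_p(\R_+)}\,dt\le c\|f_j\|^q_{\dot H^{\al-2,\frac12\al-1}_{pq,0}}$ which follows exactly as in the proof of Lemma \ref{lemma-1} (since $\mathcal U^* f_j = T(\tilde f_j^*)$ up to reflection, and Proposition \ref{prop-3} applies). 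Combining, one gets $\|p_1\|_{L^q(0,T;\dot H^{\al-1}_p(\R_+))}\le c\|f\|_{\dot H^{\al-2,\frac12\al-1}_{pq,0}(\R^n_+\times I)}$.

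The main obstacle I anticipate is the bookkeeping at the boundary when $\al-1-1/p\le 0$, i.e. when the Besov index of the trace is nonpositive: then one cannot use the ordinary trace theorem Proposition \ref{prop-2}(i), and must instead lean on the harmonic-extension equivalence \eqref{CK300-april10}, which is stated for $\al\ge 0$ but requires care that the relevant harmonic function genuinely has the claimed $\dot H^{\al-1}_p(\R_+)$ regularity and that its boundary data lies in the right (possibly negative-order) homogeneous Besov space. A secondary technical point is justifying the interchange of the $z'$-integration (the $E$-convolution) with the $\pa^2/\pa x_j\pa x_n$ differentiation and with the fractional time derivative $D_t^{\al/2}$; this is legitimate on the dense class $f\in L^q(I;C_c^\infty(\R^n_+))$, where all integrals converge absolutely, and then one passes to the limit by the density of this class in $\dot H^{\al-2,\frac12\al-1}_{pq,0}$. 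Once these two issues are handled, the remaining estimates are routine applications of the Mikhlin–Lizorkin multiplier theorem (already cited in the proof of Proposition \ref{iterated}) together with Lemma \ref{lemma-1}.
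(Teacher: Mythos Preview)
Your proposal is correct and follows essentially the same route as the paper: rewrite $p_1$ as the Poisson extension $P_{x_n}$ applied to the tangential derivative of the boundary trace $Uf_j=\mathcal U f_j|_{y_n=0}$ (the paper introduces $\mathcal U f_j$ rather than $\mathcal U^* f_j$, but they coincide on $\{y_n=0\}$ and satisfy the same bulk estimates), use the harmonic-extension equivalence \eqref{CK300-april10} to pass from $\dot H^{\al-1}_p(\R_+)$ to $\dot B^{\al-1-1/p}_p(\Rn)$, then the trace inequality \eqref{CK15-april11} to reach $\|\mathcal U f_j\|_{\dot H^{\al}_p(\R_+)}$, and finally the caloric-potential bound from Proposition \ref{prop-3}. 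Your anticipated obstacle about $\al-1-1/p\le 0$ is handled exactly as you suggest, via \eqref{CK300-april10}; the paper implicitly works with $\al\ge 1$ here since the pressure estimate in Theorem \ref{maintheo-stokes} is only claimed for $\al>1+1/p$.
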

\begin{proof}
We note that
\begin{align}\label{pressure-p-1}
p_1(x,t) = 4 \sum_{1 \leq j \leq n-1}  P_{x_n} \frac{\pa}{\pa x_j} U
f_j(x',t),
\end{align}
where $P_{x_n}$ is the Poisson integral of the Laplace equation and
\begin{align*}
U f_j(y',t) & = \int_0^t \int_{{\mathbb R}^n_+}
                  \Ga(z' -y', z_n, t)  f_j(z, \tau) dz d\tau.
\end{align*}
Hence, by the properties of Poisson integral of Laplace equation
(see Proposition \ref{prop-2}), for $ \al \geq 1$,
\begin{align}
\notag \int_0^T \| p_1(\cdot, t)\|^q _{ \dot H^{\al-1}_p ({\mathbb
R}^n_+)} dt
 &\leq c \sum_{1 \leq j \leq n-1} \int_0^T
            \| \frac{\pa}{\pa x_j} U f_j(\cdot, t) \|^q_{ \dot B^{\al-1 -\frac1p}_p ({\mathbb R}^{n-1})} dt\\
 & \leq c \sum_{1 \leq j \leq n-1} \int_0^T
             \| U f_j (\cdot, t)\|^q_{\dot B_p^{\al -\frac1p} ({\mathbb R}^{n-1}) } dt.
\end{align}
Let ${\mathcal U} f_j(y,t) = \int_0^t \int_{{\mathbb R}^n_+}
                 \Ga(z' -y',z_n -y_n, t)  f_j(z, \tau) dz d\tau$
such that ${\mathcal U}f_j|_{y_n =0} = Uf_j$. Using Proposition
\ref{prop-2}  and Proposition \ref{prop-1}, we obtain
\[
\int_0^T \| p_1\|^q_{\dot H_p^{\al-1 }( {\mathbb R}^n_+ )}  dt\leq
\sum_{1 \leq j \leq n-1} \int_0^T \|   U f_j\|^q_{\dot B_p^{ \al
-\frac1p}( {\mathbb R}^{n-1})}dt\leq c \sum_{1 \leq j \leq n-1}
\int_0^T \|   {\mathcal U} f_j\|^q_{\dot H_p^{ \al }( {\mathbb
R}^n_+)}dt
\]
\begin{equation}\label{pressure-sobolev}
%\leq c \sum_{1 \leq j \leq n-1} \|{\mathcal U} f_j\|^q_{\dot H_{pq}^{
%\al,\frac12 \al }( {\mathbb R}^n_+ \times (0,T) )}
\leq c \sum_{1 \leq j \leq n-1} \|  f_j\|^q_{ \dot H_{pq,0}^{
\al-2,\frac12 \al -1}( {\mathbb R}^n_+ \times I ) )}.
\end{equation}
%Noting $D^{\frac{\al}2}_t  p_1 = 4 \sum_{1 \leq j \leq n-1} P_{x_n}
%\frac{\pa}{\pa x_j} D^{\frac{\al}2}_t U f_j(x',t)$ and following the
%similar calculations, we have
%\begin{align}\label{030507}
%\notag \int_0^T \|D^{\frac{\al}2}_t  p_1\|^q_{L^p ( {{\mathbb
%R}^n_+})} dt \leq c \sum_{1 \leq j \leq n-1} \int_0^T \|
%D^{\frac{\al}2}_t
%{\mathcal U}f_j\|^q_{\dot H_p^1 ({\mathbb R}^n_+ )}   dt\\
%= c  \sum_{1 \leq j \leq n-1} \|  D^{\frac12 \al}_t f_j\|^q_{\dot
%H_{pq}^{-1,-\frac12} ({\mathbb R}^n_+  \times I)} \leq c  \sum_{1
%\leq j \leq n-1} \|   f_j\|^q_{\dot H_{pq}^{\al -1,\frac12 \al
%-\frac12 } ({\mathbb R}^n_+  \times I)}.
%\end{align}
%Taking $\al =1$ in \eqref{pressure-sobolev}, we get
%\begin{align*}
%\int_{{\mathbb R}^n_+} \|    p_1\|^q_{L^p (0,T)}   dx \leq \sum_{1
%\leq j \leq n-1} \int_0^T \|   {\mathcal U} f_j\|^q_{\dot H_p^1(
%{\mathbb R}^n_+)}dt \leq \sum_{1 \leq j \leq n-1}
%      \|   {\mathcal U} f_j\|^q_{\dot H_{pq}^{1,\frac12}( {\mathbb R}^n_+ \times I)}.
%\end{align*}
%Using Proposition \ref{prop-1}, we obtain
%\begin{align}\label{pressure-p-1-t}
%\int_{{\mathbb R}^n_+} \|    p_1\|^q_{\dot H^{\frac12 \al
%-\frac12}_p (0,T)}   dx \leq \sum_{1\leq j \leq n-1} \| {\mathcal U}
%f_j\|^q_{\dot H_{pq}^{ \al, \frac12 \al } ({\mathbb R}^n_+ \times
%(0,T))} \leq \sum_{1\leq j \leq n-1} \|   f_j\|^{p}_{\dot H_{pq,0}^{
%\al-2, \frac12 \al-1 } ({\mathbb R}^n_+ \times (0,T))}.
%\end{align}
%We deduce the lemma from \eqref{pressure-sobolev}  and
%\eqref{030507}.
This completes the proof.
\end{proof}

\begin{lemm}\label{lemma-1-2}
Let the assumption in Lemma \ref{lemma-1} hold. If $p_2$ is given in
\eqref{CK20-april8}, then for $\al > 1 + \frac1p$,
\begin{align} \label{pressure-p-2}
  \|  p_2\|_{ L^q((0,T); \dot H^{\al-1}_p ({\mathbb R}^n_+))}
   \leq c  \sum_{1\leq j \leq n-1} \|  f_j \|_{\dot H^{\al-2,\frac12 \al-1}_{pq,0} ({\mathbb R}^n_+ \times I)}.
\end{align}
\end{lemm}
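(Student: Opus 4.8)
The plan is to analyze the kernel structure of $p_2$ exactly as was done for $p_1$ in Lemma \ref{lemm-1-1}, but now accounting for the extra normal derivative $\pa/\pa y_n$ falling on the heat kernel, which is the source of the more restrictive hypothesis $\al>1+\frac1p$. First I would rewrite
\[
p_2(x,t) = 4 \sum_{1 \leq j \leq n-1}  P_{x_n} \frac{\pa}{\pa x_j} V f_j(x',t),
\qquad
V f_j(y',t) := \int_0^t \int_{{\mathbb R}^n_+}
                  \frac{\pa \Ga(z' -y', z_n, t-\tau)}{\pa z_n}\, f_j(z, \tau) \, dz \, d\tau,
\]
so that $p_2$ is again the Poisson integral of a tangential trace. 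Note that $V f_j = -\frac{\pa}{\pa y_n}\big|_{y_n=0}\widetilde{\mathcal U} f_j$ where $\widetilde{\mathcal U} f_j(y,t) = \int_0^t\int_{{\mathbb R}^n_+}\Ga(z'-y', z_n+y_n, t-\tau)f_j(z,\tau)\,dz\,d\tau$ — equivalently, after an integration by parts in $z_n$ that produces a boundary term at $z_n=0$ which vanishes because $\Ga$ is replaced by its even/odd reflection appropriately — but the cleanest route is simply to recognize $Vf_j$ as the normal trace of the gradient of ${\mathcal U}^*f_j$ from \eqref{C90K-april7}. The key point I want is that $Vf_j = \pa_n({\mathcal U}^* f_j)|_{y_n=0}$ up to sign and reflection, so that $\|Vf_j(\cdot,t)\|_{\dot B^{\al-1-\frac1p}_p({\mathbb R}^{n-1})}$ is controlled by $\|\na {\mathcal U}^*f_j(\cdot,t)\|_{\dot B^{\al-1-\frac1p}_p({\mathbb R}^{n-1})}$, which by the trace estimate in Proposition \ref{prop-2}(ii) is bounded by $\|\na{\mathcal U}^*f_j(\cdot,t)\|_{\dot H^{\al-1}_p({\mathbb R}^n_+)} \le c\|{\mathcal U}^*f_j(\cdot,t)\|_{\dot H^{\al}_p({\mathbb R}^n_+)}$, provided $\al-1>\frac1p$, i.e. $\al>1+\frac1p$ — this is exactly where the hypothesis enters.

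The remaining steps mirror Lemma \ref{lemm-1-1}. Using the $\dot B\to\dot B$ boundedness of $P_{x_n}$ composed with the trace (Proposition \ref{prop-2}(ii), the harmonic case \eqref{CK300-april10}) and absorbing the tangential derivative $\pa/\pa x_j$ into a loss of one Besov derivative, I get
\[
\int_0^T \|p_2(\cdot,t)\|^q_{\dot H^{\al-1}_p({\mathbb R}^n_+)}\,dt
\;\le\; c\sum_{1\le j\le n-1}\int_0^T \|Vf_j(\cdot,t)\|^q_{\dot B^{\al-1-\frac1p}_p({\mathbb R}^{n-1})}\,dt
\;\le\; c\sum_{1\le j\le n-1}\int_0^T \|{\mathcal U}^*f_j(\cdot,t)\|^q_{\dot H^{\al}_p({\mathbb R}^n_+)}\,dt.
\]
Then I invoke Lemma \ref{lemma-1} (applied to the reflected kernel, exactly as ${\mathcal U}^*f_j$ was treated there, or directly the computation $\|{\mathcal U}^* f_j\|_{\dot H^{\al,\frac12\al}_{pq}}\le c\|f_j\|_{\dot H^{\al-2,\frac12\al-1}_{pq,0}}$ that appears in \eqref{U-*}) together with the embedding $\dot H^{\al,\frac12\al}_{pq,*}(\R_+\times I)\subset L^q(I;\dot H^\al_p(\R_+))$ to conclude
\[
\int_0^T \|{\mathcal U}^*f_j(\cdot,t)\|^q_{\dot H^{\al}_p({\mathbb R}^n_+)}\,dt
\;\le\; c\,\|f_j\|^q_{\dot H^{\al-2,\frac12\al-1}_{pq,0}({\mathbb R}^n_+\times I)},
\]
which chains together to give \eqref{pressure-p-2}.

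The main obstacle, and the place requiring care, is the justification that applying $\pa/\pa y_n$ to $\Ga$ inside $Vf_j$ and then taking the trace at $y_n=0$ really does cost only one spatial derivative and no time derivative — in other words, that $Vf_j$ genuinely equals (a reflection of) the normal trace of $\na{\mathcal U}^*f_j$ rather than something worse. Concretely one must check that the order of differentiation in $z_n$, convolution, and restriction to $\{z_n=0\}$ can be interchanged, and that the resulting object is $\pa_n{\mathcal U}^*f_j|_{y_n=0}$; this is where the reflection $z_n\mapsto -z_n$ built into ${\mathcal U}^*$ (the kernel $\Ga(x-y^*,t)$) is used, and it is the reason the hypothesis must be $\al>1+\frac1p$ rather than $\al>\frac1p$ as in Lemma \ref{lemm-1-1}. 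Once this identification is made, everything else is a routine repetition of the argument for $p_1$, using Proposition \ref{prop-1}, Proposition \ref{prop-2}, and Lemma \ref{lemma-1}.
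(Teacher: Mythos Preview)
Your overall strategy matches the paper's: write $p_2$ as a Poisson extension of a boundary datum, identify that datum as the trace of $\partial_n$ applied to a heat potential of $f_j$, and invoke Proposition~\ref{prop-2} at level $\al-1$, which forces $\al-1>\frac1p$. The identification $Vf_j=\partial_n({\mathcal U}^* f_j)|_{y_n=0}$ (equivalently $-\partial_n{\mathcal U}f_j|_{y_n=0}$ in the paper's notation) is correct, and the final chain through $\|{\mathcal U}^*f_j\|_{\dot H^\al_p}$ is exactly what the paper does.

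However, your decomposition formula $p_2 = 4\sum_j P_{x_n}\frac{\partial}{\partial x_j} Vf_j$ is wrong, and this makes your derivative count inconsistent. In $p_1$ the kernel is $\partial_j\partial_n E$, and since $\partial_n E(\cdot,x_n)$ \emph{is} the Poisson kernel $P_{x_n}$, one legitimately gets $P_{x_n}\partial_j Uf_j$. In $p_2$ the kernel is only $\partial_j E$, and on the Fourier side $\widehat{\partial_j E(\cdot,x_n)}(\xi')=c\,i\xi_j|\xi'|^{-1}e^{-x_n|\xi'|}$, so convolution with $\partial_j E(\cdot,x_n)$ equals $P_{x_n}\circ R'_j$ with the \emph{tangential Riesz transform} $R'_j$, not $P_{x_n}\circ\partial_j$. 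This is precisely the representation \eqref{pressure-p-22} in the paper. With $R'_j$ (bounded on $\dot B$ spaces) in place of $\partial_j$, the step $\|p_2\|_{\dot H^{\al-1}_p}\le c\|Vf_j\|_{\dot B^{\al-1-\frac1p}_p}$ is justified with no derivative loss; your phrase ``absorbing the tangential derivative $\partial/\partial x_j$ into a loss of one Besov derivative'' would instead produce $\|Vf_j\|_{\dot B^{\al-\frac1p}_p}$, which is one order too strong and would ultimately require $\|{\mathcal U}^*f_j\|_{\dot H^{\al+1}_p}$, not available from the hypothesis. Once you replace $\partial_j$ by $R'_j$ in the representation, the rest of your argument goes through verbatim and coincides with the paper's proof.
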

\begin{proof}
As before, we denote by $P_{x_n}$ the Poisson integral of the
Laplace equation in ${\mathbb R}^n_+$ and $R^{'}_j$ indicates Riesz
transform in ${\mathbb R}^{n-1}$. We then rewrite $p_2$ as
\begin{align} \label{pressure-p-22}
p_2(x,t) = 4 \sum_{1 \leq j \leq n-1}  P_{x_n} R^{'}_j   U_2
f_j(x',t),
\end{align}
where
\begin{align*}
U_2 f_j(y',t) & = \int_0^t \int_{{\mathbb R}^n_+}
                  \frac{\pa}{\pa z_n}\Ga(z' -y', z_n, t)  f_j(z, \tau) dz d\tau.
\end{align*}
We note that  $U_2f_j={\mathcal U}_2 f_j|_{y_n =0}$, where
\begin{align*}
{\mathcal U}_2 f_j(y,t)  =- \frac{\pa}{\pa y_n} {\mathcal U}
f_j(y,t)  = \int_0^t \int_{{\mathbb R}^n_+} \frac{\pa}{\pa
z_n}\Ga(z' -y',z_n - y_n, t)  f_j(z, \tau) dz d\tau.
\end{align*}
Via the properties of Poisson integral and Proposition \ref{prop-2},
for $ \al > 1+\frac1p$ we have
\[
\int_0^T \|  p_2(\cdot, t)\|^q _{ \dot H^{\al-1}_p ({\mathbb
R}^n_+)} dt\le c \sum_{1 \leq j \leq n-1} \int_0^T \|   U_2
f_j(\cdot, t) \|^q_{\dot B^{\al-1 -\frac1p}_p ({\mathbb R}^{n-1})}
dt
\]
\[
%\le c \sum_{1 \leq j \leq n-1} \|   {\mathcal U}_2 f_j(\cdot, t)
%\|^q_{\dot H^{\al-1, \frac12 \al-\frac12  }_{pq} ({\mathbb R}^n_+
%\times (0,T))}
\leq c \sum_{1 \leq j \leq n-1}\|  f_j \|_{\dot H^{\al-2,\frac12 \al-1}_{pq,0} ({\mathbb R}^n_+ \times I)}.
\]
%\begin{align}
%\notag \int_0^T \|  p_2(\cdot, t)\|^q _{ \dot H^{\al-1}_p ({\mathbb
%R}^n_+)} dt
% &\leq c \sum_{1 \leq j \leq n-1} \int_0^T
%            \|   U_2 f_j(\cdot, t) \|^q_{\dot B^{\al-1 -\frac1p}_p ({\mathbb R}^{n-1})} dt\\
%&\leq c \sum_{1 \leq j \leq n-1}
%            \|   {\mathcal U}_2 f_j(\cdot, t) \|^q_{\dot H^{\al-1, \frac12 \al-\frac12  }_{pq} ({\mathbb R}^n_+ \times (0,T))}\\
%&\leq c \sum_{1 \leq j \leq n-1}
%            \|   f_j(\cdot, t) \|^q_{\dot H^{\al-2, \frac1{2} \al-1 }_{pq} ({\mathbb R}^n_+ \times (0,T))}.
%\end{align}
This completes the proof.
\end{proof}

\begin{remark}
In case $1\le \alpha\le 1+\frac1p$, the pressure $p_2$
%in \eqref{CK20-april8}
is decomposed to $p_2 = Q + \partial_t P$ such
that
\begin{equation}\label{CK-july24-10}
\| Q\|_{ L^q((0,T); \dot H^{\al-1}_p ({\mathbb R}^n_+))} \leq c   \|
f \|_{\dot H^{\al-2,\frac12 \al-1}_{pq,0} ({\mathbb R}^n_+ \times
I)},
\end{equation}
\begin{equation}\label{CK-july24-20}
\| P\|_{L^q (0,T; \dot H^{\al +1}_p(\R_+))}  \leq c \| f\|_{  \dot
H^{\al-2, \frac12 \al -1}_p(\R_+ \times I))}.
\end{equation}
Indeed, since $div \, f =0$, we get
\begin{align}\label{p-2decompose}
\begin{array}{ll}\vspace{2mm}
p_2(x,t) &=4 \sum_{1 \leq j \leq n-1}\int_{{\mathbb R}^{n-1}}
\frac{\pa E(x'-z',x_n)}{ \pa x_j} \int_0^t \int_{{\mathbb R}^n_+}
\frac{\pa \Ga(z' -y', y_n, t-\tau)}{\pa y_n}f_j(y,\tau) dyd\tau
dz'.\\
\vspace{2mm} & =4  \int_{{\mathbb R}^{n-1}}
  E(x'-z',x_n)  \int_0^t \int_{{\mathbb R}^n_+}
\frac{\pa \Ga(z' -y', y_n, t-\tau)}{\pa y_n} D_{y_n}f_n(y,\tau)
dyd\tau dz'\\ \vspace{2mm} &=4  \int_{{\mathbb R}^{n-1}}
  E(x'-z',x_n)  \int_0^t \int_{{\mathbb R}^n_+}
\De' \Ga(z' -y', y_n, t-\tau) f_n(y,\tau) dyd\tau dz'\\
\vspace{2mm} &  +4 D_t \int_{{\mathbb R}^{n-1}}
  E(x'-z',x_n)  \int_0^t \int_{{\mathbb R}^n_+}
 \Ga(z' -y', y_n, t-\tau) f_n(y,\tau) dyd\tau dz'\\
\vspace{2mm} &  := Q + \partial_t P.
\end{array}
\end{align}
The estimate \eqref{CK-july24-10} is  similar to
the lemma \ref{lemm-1-1} and we skip its details. For the second term
$P$, we estimate
\[
\| P\|_{L^q (0,T; \dot H^{\al +1}_p(\R_+))} \leq  c  \| P|_{x_n
=0}\|_{L^q (0,T; \dot B^{\al +1 -\frac1p}_p(\Rn))}\leq c \|  U
f_n\|_{L^q (0,T; \dot B^{\al  -\frac1p}_p(\Rn))}
\]
\begin{equation}\label{CK-july24-30}
\leq  c\| {\mathcal U} f_n\|_{L^q (0,T; \dot H^{\al}_p(\R_+))}\leq  c\| f_n\|_{
\dot H^{\al-2, \frac12 \al -1}_p(\R_+ \times I))}.
\end{equation}
However, the estimate \eqref{CK-july24-30} of $\partial_t P$ is not
available and thus neither is the pressure $p$. This is what was
essentially shown in \cite[Theorem 1.5]{KS} for the case $\alpha=1$.
Our estimates of the pressure above, however, implies that if $f$
has a bit better regularity, i.e. $f\in \dot H^{\al-2,\frac12
\al-1}_{pq,0} ({\mathbb R}^n_+ \times I)$ with $\alpha> 1+\frac1p$,
the pressure $p$ can be controlled in terms of $f$.
\end{remark}

\section{Stokes system with $f=0$ and $v_0\neq 0$}
\setcounter{equation}{0}

In this section, we consider the Stokes system with homogeneous
external force and non-zero initial data in ${\mathbb R}^n_+\times
(0,\infty)$, namely
\begin{equation}\label{C10K-april9}
v_t - \nu\De v + \na p =0, \qquad {\rm div} \, v =0 \qquad \mbox{ in
}\,\,Q^+_T:=\R_+ \times [0,T),
\end{equation}
\begin{equation}\label{C20K-april9}
v(x,0)=v_0\quad \mbox{ and }\quad v(x,t)=0,\quad x\in\partial\R_+ = {\mathbb R}^{n-1}.
\end{equation}
We review a solution representation of
\eqref{C10K-april9}-\eqref{C20K-april9} formulated by S. Ukai (see
\cite{U}). Let $R = (R', R_n)$ and $S = (S_1, \cdots , S_{n-1})$ be
Riesz's operators in ${\mathbb R}^n$ and ${\mathbb  R}^{n-1}$,
respectively, that is,
\begin{align*}
R_i f(x) &= c_n \int_{{\mathbb R}^n} \frac{x_i -y_i}{|x -y|^{n+1}}
f(y) dy,
\qquad x \in {\mathbb R}^n, \quad  1 \leq i \leq n,\\
S_i f(x', x_n) &= c_{n-1} \int_{{\mathbb R}^{n-1}} \frac{x_i
-y_i}{|x' -y'|^{n}} f(y', x_n) dy',  \qquad x' \in {\mathbb
R}^{n-1}, \quad  1 \leq i \leq n-1.
\end{align*}
The functional operator $V_1$ and $V_2$ are defined by
\begin{equation}\label{CK300-april9}
V_1 v_0(x) =  -(S \cdot v_0(\cdot, x_n) )    (x') + v^n_0(x),\qquad
V_2 u_0(x) = v_0^{'} (x) +  ( S v_0^n (\cdot, x_n) )(x').
\end{equation}
Further, let $\ga$ be the restriction operator  from ${\mathbb
R}^n_+$ to ${\mathbb R}^{n-1}$, namely $\ga g = g|_{{\mathbb
R}^{n-1}}$. For a given function $g:{\mathbb R}^n_+\rightarrow
{\mathbb R} $ we define a functional operator $U$ by
\begin{align*}
U g(x) = r R^{'} \cdot S ( R^{'} \cdot S + R_n ) eg(x), \qquad x \in
{\mathbb R}^n_+,
\end{align*}
where $r$ be the restriction operator from ${\mathbb R}^n$ to
${\mathbb R}^n_+$
%, $\ga $ the
%restriction operator from ${\mathbb R}^n_+$ to ${\mathbb R}^{n-1}$
and  $e$ the zero extension operator from ${\mathbb R}^n_+$ over
${\mathbb R}^n$, i.e.
\begin{equation}\label{CK100-april9}
rf = f|_{{\mathbb R}^n_+}, \qquad
%\ga f = f|_{{\mathbb R}^{n-1}},\qquad
ef = \left\{\begin{array}{ll}
f, & \mbox{for} \quad x_n > 0,\\
0, & \mbox{for} \quad x_n < 0.
\end{array}
\right.
\end{equation}
We also define the  integral operators $D$ and $E(t) $by
\begin{align*}
D g(x) & = c_n\int_{{\mathbb R}^{n-1}} \frac{x_n}{(|x' -y'|^2 + x_n^2)^{\frac{n}2}}  g(y') dy',\\
E(t) g(x) & = \int_{{\mathbb R}^n_+} \Big(\Ga(x-y,t) - \Ga(x-y^*, t)
\Big) g(y) dy.
\end{align*}
Then, the solution of \eqref{C10K-april9}-\eqref{C20K-april9} is
represented by
\begin{equation}\label{ukai-formula-v}
v'(x,t) = E(t) V_2 u_0 - SU r E(t) e V_1 u_0,\qquad v^n(x,t) = U  r
E(t) e V_1 u_0 (x,t),
\end{equation}
\begin{equation}\label{ukai-formula-p}
p (x,t) = - D \ga \pa_n E(t) e V_1 u_0.
\end{equation}
We denote, for simplicity, by $X$ one of function spaces
$H^\be_p({\mathbb R}^n)$, $\dot H^\be_p({\mathbb R}^n)$,
$B^\be_p({\mathbb R}^n)$ and $\dot B^\be_p({\mathbb R}^n)$. We then
note that the following functionals are bounded operators:
\begin{equation}\label{rs}
R_i : X\rightarrow X,\,\,\,i=1,\cdots,n,\quad\quad S_i: X\rightarrow
X,\,\,\,i=1,\cdots,n-1.
\end{equation}
Similarly we mean by $Y_+$ one of $H^\be_{p,0} ({\mathbb R}^n_+)$,
$B^\be_{p,0} ({\mathbb R}^n_+)$, $\dot H^\be_{p,0} ({\mathbb
R}^n_+)$ and $\dot B^\be_{p,0} ({\mathbb R}^n_+)$. We also observe
that
\begin{align}\label{vs}
V_1: Y_+\rightarrow Y_+, \qquad V_2: Y_+\rightarrow Y_+, \qquad U :
Y_+\rightarrow Y_+
\end{align}
are are bounded operators, where we used that $e:
B_{q,0}^\al({\mathbb R}^n_+)\ri B_{q}^\al({\mathbb R}^n)$ or  $e:
\dot B_{q,0}^\al({\mathbb R}^n_+) \ri \dot B_{q}^\al({\mathbb R}^n)$
is a bounded operator.

In next proposition we consider the heat equation with an initial
data in Besov spaces in $\mathbb R^n$.

\begin{prop}\label{E's}
Let $\al\ge 0$. Suppose that $u_0 \in \dot
B_{pq}^{\al-\frac2q}({\mathbb R}^n)$. For each $(x,t)\in {\mathbb
R}^n \times (0, \infty)$ we define
\begin{align}\label{u-1}
u(x,t) := \left\{ \begin{array}{ll}
< u_0(\cdot), \Ga(x -\cdot,t)>, \qquad\;\;\;\textrm{if}\;\;  0\le \al <\frac2q ,\\
\\
\int_{{\mathbb R}^n} \Ga(x-y,t)  u_0(y)\;
dy,\qquad\textrm{if}\;\;\frac2q \leq \al.
\end{array}
\right.
\end{align}
Then $ u \in   \dot H^{\al, \frac12 \al}_{pq,*} ( {\mathbb R}^n \times (0, \infty) ) )$ and the
following estimate is satisfied:
\begin{align}\label{estimate 1}
\| u \|_{\dot {H}^{\al, \frac12 \al}_{pq,*} ( {\mathbb R}^n \times (0,\infty))}
\le c\, \| u_0\|_{\dot B_{pq}^{\al-\frac2q} ({\mathbb R}^n) }.
\end{align}
%where $c$ is independent of $u_0$.
If $ u_0 \in B_{pq}^{\al-\frac2q} ({\mathbb R}^n)$, then $u \in
H^{\al,\frac12 \al}_{pq,*} ( {\mathbb R}^n \times (0,\infty) )$ and
\begin{align}\label{estimate 2}
\| u \|_{ {H}^{\al, \frac12 \al}_{pq,*} ( {\mathbb R}^n \times (0,\infty))} \le
c\, \| u_0\|_{ B_{pq}^{\al-\frac2q} ({\mathbb R}^n) }.
\end{align}
\end{prop}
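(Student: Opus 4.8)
The plan is to use a Littlewood--Paley decomposition of $u_0$ to reduce the entire statement to one classical parabolic trace estimate for the heat semigroup. Write $s:=\al-\frac12\cdot\frac4q=\al-\frac2q$ and fix a dyadic decomposition $u_0=\sum_{k\in{\mathbb Z}}u_0*\phi_k$, each $u_0*\phi_k$ having Fourier support in $\{2^{k-2}<|\xi|<2^{k+2}\}$; by the definition of the homogeneous Besov norm and density of $C_c^\infty({\mathbb R}^n)$ one has $\| u_0\|^q_{\dot B^{s}_{pq}({\mathbb R}^n)}\approx\sum_k\big(2^{sk}\| u_0*\phi_k\|_{L^p}\big)^q$. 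For each $t>0$ the series $\sum_k\Ga(\cdot,t)*(u_0*\phi_k)$ converges to $u(\cdot,t)$ as defined in \eqref{u-1} (an honest Bochner integral when $\al\ge\frac2q$), so it suffices to prove \eqref{estimate 1} as an a priori bound on finite sums and then pass to the limit.

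The first step is to decouple the two terms of the $\dot H^{\al,\frac12\al}_{pq,*}$-norm by showing that the heat flow $u(x,t)=\Ga(\cdot,t)*u_0$ satisfies $D_t^{*\frac12\al}u(\cdot,t)=-g_{-\al}*u(\cdot,t)$ for every $t>0$, i.e. $D_t^{*\frac12\al}u$ coincides, up to a constant, with $\De^{\frac12\al}u$ in space. After a Fourier transform in $x$ this reduces to a one-dimensional computation: for $\la>0$, the substitution $\si=t+\rho$ and the identity $\int_0^\infty\rho^{-\frac12\al}e^{-\rho\la}\,d\rho=\Ga(1-\tfrac12\al)\,\la^{\frac12\al-1}$ give $J_{1-\frac12\al}\big(e^{-\,\cdot\,\la}\chi_{(0,\infty)}\big)(t)=\la^{\frac12\al-1}e^{-t\la}$, and differentiating in $t$ yields $D_t^{*\frac12\al}\big(e^{-\,\cdot\,\la}\chi_{(0,\infty)}\big)(t)=-\la^{\frac12\al}e^{-t\la}$; inserting $\la$ proportional to $|\xi|^2$ and multiplying by $\widehat{u_0}(\xi)$ gives the claim (for $0\le\al<\frac2q$ one first argues modewise for nice data and extends by pairing against $u_0\in\dot B^{s}_{pq}$, the integral $\int_t^\infty$ converging since $\| u(\cdot,\si)\|_{L^p}\to0$ as $\si\to\infty$). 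Hence $\| u\|^q_{\dot H^{\al,\frac12\al}_{pq,*}({\mathbb R}^n\times(0,\infty))}\approx\int_0^\infty\| u(\cdot,t)\|^q_{\dot H^\al_p}\,dt$, and the proposition is reduced to $\int_0^\infty\| u(\cdot,t)\|^q_{\dot H^\al_p}\,dt\le c\,\| u_0\|^q_{\dot B^{s}_{pq}}$.

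For that estimate the plan is as follows: from Bernstein's inequality and the Gaussian decay of the heat multiplier on dyadic frequency annuli, $\| \De^{\frac12\al}\big(\Ga(\cdot,t)*u_0*\phi_j\big)\|_{L^p}\le c\,2^{\al j}e^{-c\,t\,2^{2j}}\| u_0*\phi_j\|_{L^p}$; split $(0,\infty)$ into the time shells $[2^{-2k-2},2^{-2k})$; on the $k$-th shell use the triangle inequality over $j$, write $2^{\al j}=2^{2j/q}2^{sj}$, and integrate over a set of length $\approx 2^{-2k}$ to get a bound $\big(\sum_{m\in{\mathbb Z}}c_m\,2^{s(k+m)}\| u_0*\phi_{k+m}\|_{L^p}\big)^q$ with $c_m:=2^{2m/q}e^{-c\,2^{2m}}$, which is summable over $m$; then sum over $k$ and apply Young's inequality $\ell^1*\ell^q\hookrightarrow\ell^q$ to the sequences $(c_m)_m$ and $(2^{sk}\| u_0*\phi_k\|_{L^p})_k$, obtaining $\int_0^\infty\| u(\cdot,t)\|^q_{\dot H^\al_p}\,dt\le c\sum_k\big(2^{sk}\| u_0*\phi_k\|_{L^p}\big)^q\approx c\,\| u_0\|^q_{\dot B^{s}_{pq}}$, which is \eqref{estimate 1}. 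For \eqref{estimate 2} one decomposes $u_0=u_0*\psi+\sum_{j\ge1}u_0*\phi_j$ and uses $\|\cdot\|_{H^\al_p}\approx\|\cdot\|_{L^p}+\|\cdot\|_{\dot H^\al_p}$: the high-frequency part is handled exactly as above (now $k$ is bounded above), while for the bounded-frequency remainder $g=u_0*\psi\in L^p$ the elementary bound $\| \Ga(\cdot,t)*g\|_{H^\al_p}+\| D_t^{*\frac12\al}\big(\Ga(\cdot,t)*g\big)\|_{L^p}\le c\min\{1,t^{-\frac12\al}\}\| g\|_{L^p}$ gives $L^q_t$-integrability.

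The step I expect to be the main obstacle is the trace estimate just described: organising the splitting into dyadic time shells, recognising the emergent double sum as an $\ell^1*\ell^q$ convolution, and — in the range $0\le\al<\frac2q$, where $u_0$ is merely a tempered distribution modulo polynomials — justifying that the Littlewood--Paley series commutes with $D_t^{*\frac12\al}$ and with the time integration. The decoupling identity of the second paragraph, which trades the fractional time derivative for $\De^{\frac12\al}$ and thereby separates the two pieces of the anisotropic norm, is the device that renders everything else routine.
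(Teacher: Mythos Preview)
Your argument is correct and somewhat more economical than the paper's. Both rest on the same Littlewood--Paley multiplier bound $\|\Phi_j(\cdot)e^{-t|\cdot|^2}\|_{M_p}\le c\,e^{-c\,t2^{2j}}$ (this is the paper's Lemma~4.2), but you diverge in two places. First, your decoupling identity $D_t^{*\frac12\al}(e^{t\De}u_0)=-\De^{\frac12\al}(e^{t\De}u_0)$ lets you absorb the time-fractional term into the spatial one for free; the paper does not notice this and instead proves a second multiplier lemma (Lemma~4.3) for the symbol $\Phi_j(\xi)|\xi|^2\int_t^\infty(s-t)^{-\frac12\al}e^{-s|\xi|^2}\,ds$ and reruns the whole dyadic argument for that term. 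Second, to sum the pieces the paper fixes $t$, splits the $j$-sum at $2^{2j}=1/t$, inserts auxiliary exponents $a,b$, and closes with H\"older's inequality; your reorganisation into dyadic time shells followed by an $\ell^1*\ell^q$ Young's inequality is shorter and equivalent. One minor caveat on \eqref{estimate 2}: the claimed decay $\|\Ga(\cdot,t)*g\|_{H^\al_p}\le c\,t^{-\frac12\al}\|g\|_{L^p}$ for the low-frequency piece $g=u_0*\psi$ fails for the $L^p$ part of the $H^\al_p$-norm (there is no decay in $t$ for generic $g\in L^p$), so on the unbounded interval $(0,\infty)$ this piece needs more care when $\al\le 2/q$; the paper, which simply declares the inhomogeneous case ``similar'', does not address this either, and on any finite interval $(0,T)$---which is all that is used downstream---your constant bound already suffices.
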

Although Proposition \ref{E's} may be known in experts, we are not
able to find it in the literature and thus we provide the proof of
our own. We start with next lemma related to theory of multipliers.
\begin{lemm}\label{multiplier2}
Let $\Phi (\xi) = \hat \phi(2^{-1} \xi) + \hat \phi (\xi) + \hat
\phi (2\xi) $ with $\phi$ in the definition of Besov spaces. Suppose
that $\Phi_j (\xi) = \Phi(2^{-j} \xi)$ and $\rho_{tj}(\xi) = \Phi_j
( \xi) e^{-t|\xi|^2}$ for each integer $j$. Then, $\rho_{tj}( \xi)$
is a $L^p(\mathbb{R}^n)$-multiplier with the finite norm $M(t,j)$
for $1 < p <\infty$ such that for $t> 0$
\begin{equation*}%\label{multiplier2_2}
M(t,j) \le ce^{-\frac14 t2^{2 j}}\sum_{0 \leq i \leq n} t^i 2^{2 ij}
\le ce^{-\frac18 t2^{2 j}}.
\end{equation*}
\end{lemm}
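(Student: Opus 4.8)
The plan is to invoke the Mikhlin--H\"ormander multiplier theorem in the form: if $m\in C^{n}({\mathbb R}^n\setminus\{0\})$ satisfies $|\xi|^{|\al|}|D^\al m(\xi)|\le A$ for all multi-indices $\al$ with $|\al|\le n$, then $m$ is an $L^p({\mathbb R}^n)$-multiplier for $1<p<\infty$ with norm at most $cA$, where $c=c(n,p)$. It therefore suffices to establish
\[
|\xi|^{|\al|}\,\bigl|D^\al\rho_{tj}(\xi)\bigr|\le c\,e^{-\frac14 t2^{2j}}\sum_{0\le i\le n}t^i2^{2ij}\qquad(\xi\neq 0,\ |\al|\le n),
\]
which gives the first asserted bound for $M(t,j)$, and then to record the elementary inequality $x^{i}e^{-\frac14 x}\le c\,e^{-\frac18 x}$ for $x\ge 0$ and $0\le i\le n$; applied with $x=t2^{2j}$ it upgrades the bound to $M(t,j)\le c\,e^{-\frac18 t2^{2j}}$.

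Two elementary facts prepare the derivative estimate. First, since ${\rm supp}\,\hat\phi\subset\{2^{-2}<|\xi|<2^2\}$, the function $\Phi$ is smooth and supported in a fixed annulus, so $\Phi_j(\xi)=\Phi(2^{-j}\xi)$ is supported where $|\xi|\approx 2^{j}$ (with absolute constants) and $|D^\be\Phi_j(\xi)|=2^{-j|\be|}|(D^\be\Phi)(2^{-j}\xi)|\le c_\be\,2^{-j|\be|}$. Second, the scaling $u=\sqrt t\,\xi$ gives $D_\xi^\ga e^{-t|\xi|^2}=t^{|\ga|/2}Q_\ga(\sqrt t\,\xi)\,e^{-t|\xi|^2}$ with $Q_\ga$ a polynomial of degree $|\ga|$; expanding this out, $D_\xi^\ga e^{-t|\xi|^2}$ is a finite sum of terms $t^{i}\xi^{\mu}e^{-t|\xi|^2}$ with $|\mu|=2i-|\ga|$ and $\lceil|\ga|/2\rceil\le i\le|\ga|$, so $|D_\xi^\ga e^{-t|\xi|^2}|\le c\sum_{|\ga|/2\le i\le|\ga|}t^{i}|\xi|^{2i-|\ga|}e^{-t|\xi|^2}$.

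Then I would apply the Leibniz rule to $\rho_{tj}=\Phi_j\cdot e^{-t|\xi|^2}$. For $\be\le\al$ put $\ga=\al-\be$; on ${\rm supp}\,\Phi_j$ one has $|\xi|\approx2^{j}$, so the generic term is bounded by $|\xi|^{|\al|}\,2^{-j|\be|}\,t^{i}|\xi|^{2i-|\ga|}e^{-t|\xi|^2}\approx 2^{j(|\al|-|\ga|-|\be|+2i)}t^{i}e^{-t|\xi|^2}=2^{2ij}t^{i}e^{-t|\xi|^2}$, the powers of $2^{j}$ collapsing because $|\al|-|\ga|-|\be|=0$. Summing over $\be\le\al$ and over $0\le i\le|\al|\le n$ yields $|\xi|^{|\al|}|D^\al\rho_{tj}(\xi)|\le c\sum_{0\le i\le n}2^{2ij}t^{i}\,e^{-t|\xi|^2}$, and since $|\xi|^2$ is comparable to $2^{2j}$ on ${\rm supp}\,\Phi_j$ we have $e^{-t|\xi|^2}\le e^{-\frac14 t2^{2j}}$ there (the precise numerical constant in the exponent being immaterial), which is the claimed bound.

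The only point that requires attention is the bookkeeping in the middle step: one must check that the positive powers of $2^{j}$ coming from $|\xi|\approx 2^{j}$ cancel exactly against the factors $2^{-j|\be|}$ produced by differentiating $\Phi_j$, so that only powers of the single scaling parameter $t2^{2j}$ survive. Everything else is the routine computation of derivatives of a Gaussian localized to a dyadic annulus, and there is no genuine analytic obstacle; in particular the final inequality $M(t,j)\le c\,e^{-\frac18 t2^{2j}}$ is robust against any change in the fixed positive constant appearing in the exponent.
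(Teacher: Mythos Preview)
Your proof is correct and follows essentially the same line as the paper's: both reduce to a standard Fourier multiplier criterion and then estimate derivatives of a Gaussian localized to a dyadic annulus, obtaining the same polynomial-in-$t2^{2j}$ factor times $e^{-\frac14 t2^{2j}}$. The only cosmetic difference is that the paper first rescales, using that the multiplier norm of $\rho_{tj}$ equals that of $\rho'_{tj}(\xi)=\Phi(\xi)e^{-t2^{2j}|\xi|^2}$ (so $\Phi$ has fixed support and no $2^{-j|\be|}$ factors need to be tracked), and then applies Stein's Marcinkiewicz-type criterion (Theorem~4.6$'$ in \cite{St}) rather than Mikhlin--H\"ormander; your direct computation with the cancellation $|\al|-|\be|-|\ga|=0$ is the unrescaled version of the same argument.
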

\begin{proof}
We note that the $L^p(\mathbb{R}^n)$-multiplier norm $M(t,j)$ of
$\rho_{tj}(\xi) $ is equal to the $L^p(\mathbb{R}^n)$-multiplier
norm of $\rho_{tj}^{'}(\xi)  := \Phi(\xi) e^{-t2^{2 j} |\xi|^2}$
(see \cite[Theorem 6.1.3]{BL}). Let $l$ be an integer with $1\le
l\le N$ and $\be=(\be_1, \be_2, \cdots, \be_n)\in\mathbb R^n$.
Suppose $\{{i_1}, {i_2}, \cdots, {i_l}\}\subset\{1, 2,\cdots, n\}$
and assume $\be_{i_1}=\be_{i_2}=\cdots =\be_{i_l} =1$ and $\be_{i}
=0$ for $i\neq i_k$ with $k=1, \cdots, l$.
%, and set $\be=(\be_1, \be_2, \cdots, \be_n)$.
Since ${\rm supp}\;(\Phi) \subset \{\xi \in \mathbb{R}^n \, | \,
\frac14 < |\xi| < 4 \}$, we have
\begin{align*}
|D^\be_{\xi} \rho_{tj}^{'}(\xi)| &\lesssim   e^{-\frac14t 2^{2 j}}
\chi_{\{\frac14 < |\xi| < 4\}} (\xi) \sum_{0 \leq i \leq |\be|} t^i
2^{2 ij},
\end{align*}
where $\chi_A$ is the characteristic function on a set $A$. Hence,
for $A = \prod_{1 \leq i \leq l} [2^{k_i}, 2^{k_i +1}]$ we obtain
\begin{align*}
\int_A \left|D^\be_{\xi} \rho_{tj}^{'}(\xi)\right| d\xi_\be \leq c  e^{-\frac14t 2^{2 j}} \sum_{0 \leq i \leq n} t^i
2^{2 ij}.
\end{align*}
Due to Theorem 4.6\'{} in \cite{St}, we deduce the lemma.
\end{proof}
Note that if $g$ is smooth, then we have
\begin{align*}
J_{1 -\si} g(t) = \int_t^\infty (s -t)^{1 -\si} g'(s) ds,\qquad
%\end{align*}
%and so
%\begin{align*}
D_t^*\si g(t) = \int_t^\infty \frac{g'(s)} {(s -t)^\si} ds.
\end{align*}
\begin{lemm}\label{multiplier3}
Suppose
that $\Phi_j (\xi) = \Phi(2^{-j} \xi)$ and $\te_{tj}(\xi) = \Phi_j
( \xi) |\xi|^2 \int_t^\infty \frac{ e^{-s|\xi|^2}} {(s -t)^\si} ds $ for each integer $j$. Then, $\te_{tj}( \xi)$
is a $L^p(\mathbb{R}^n)$-multiplier with the finite norm $N(t,j)$
for $1 < p <\infty$ such that for $t> 0$
\begin{equation*}%\label{multiplier2_2}
N(t,j) \le c  2^{2\si j} e^{-\frac14 t2^{2 j}}\sum_{0 \leq i \leq n} t^i 2^{2 ij}
\le c 2^{2\si j} e^{-\frac18 t2^{2 j}}.
\end{equation*}

\end{lemm}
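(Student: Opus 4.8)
The plan is to mimic the argument of Lemma \ref{multiplier2} almost verbatim, with the extra twist that the multiplier $\te_{tj}$ now contains the fractional-integration factor $\int_t^\infty (s-t)^{-\si}e^{-s|\xi|^2}\,ds$ in place of the plain Gaussian $e^{-t|\xi|^2}$. First I would reduce to a rescaled multiplier. By the scaling invariance of $L^p$-multiplier norms (\cite[Theorem 6.1.3]{BL}), the multiplier norm of $\te_{tj}(\xi)=\Phi_j(\xi)|\xi|^2\int_t^\infty(s-t)^{-\si}e^{-s|\xi|^2}\,ds$ equals that of $\te_{tj}'(\xi):=\Phi(\xi)|\xi|^2 2^{2j}\int_t^\infty(s-t)^{-\si}e^{-s2^{2j}|\xi|^2}\,ds$; substituting $s=2^{-2j}\sigma$ in the integral one gets a clean factor $2^{2\si j}$ out front, so it suffices to bound the multiplier norm of $\Phi(\xi)\,\zeta_{\tau}(\xi)$ with $\tau=t2^{2j}$ and $\zeta_\tau(\xi)=|\xi|^2\int_\tau^\infty(\sigma-\tau)^{-\si}e^{-\sigma|\xi|^2}\,d\sigma$ by $c\,e^{-\frac14\tau}\sum_{0\le i\le n}\tau^i$.

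Next I would verify the hypotheses of the Marcinkiewicz-type criterion (Theorem 4.6$'$ in \cite{St}), exactly as in Lemma \ref{multiplier2}: since ${\rm supp}\,\Phi\subset\{\tfrac14<|\xi|<4\}$, I only need to estimate the mixed derivatives $D^\be_\xi$ of $\Phi(\xi)\zeta_\tau(\xi)$ with each $\be_i\in\{0,1\}$ on this annulus, and then integrate over dyadic rectangles. The key pointwise bound is that on $\{\tfrac14<|\xi|<4\}$ one has
\[
\bigl|D^\be_\xi\zeta_\tau(\xi)\bigr|\le c\,e^{-\frac14\tau}\sum_{0\le i\le|\be|}\tau^i,
\]
which follows because differentiating $e^{-\sigma|\xi|^2}$ in $\xi$ brings down factors of $\sigma$ (bounded by $c(\sigma-\tau)+c\tau$ on the domain of integration), the excess $(\sigma-\tau)$ powers are absorbed into the integrable kernel $(\sigma-\tau)^{-\si}$ (here $0<\si<1$ keeps $\int_0^1 u^{k-\si}\,du<\infty$ for $k\ge0$), and the remaining $e^{-\frac34\sigma|\xi|^2}\le e^{-\frac14\sigma}\cdot e^{-\frac12\sigma|\xi|^2}$-type splitting yields the exponential decay $e^{-\frac14\tau}$ together with a convergent $\sigma$-integral. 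The derivatives of $\Phi$ are harmless since $\Phi$ is a fixed Schwartz function. Plugging this into \cite[Theorem 4.6$'$]{St} gives $N(t,j)\le c\,2^{2\si j}e^{-\frac14 t2^{2j}}\sum_{0\le i\le n}t^i2^{2ij}$, and the final cruder bound $\le c\,2^{2\si j}e^{-\frac18 t2^{2j}}$ follows since $t^i2^{2ij}e^{-\frac18 t2^{2j}}$ is bounded uniformly in $t>0$ for each $0\le i\le n$.

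The main obstacle, and the only point that genuinely differs from Lemma \ref{multiplier2}, is controlling the $\xi$-derivatives of the integral $\int_\tau^\infty(\sigma-\tau)^{-\si}e^{-\sigma|\xi|^2}\,d\sigma$ uniformly on the annulus: one must check that differentiation under the integral sign is legitimate and that the powers of $\sigma$ produced by $D_\xi^\be$ do not destroy convergence near $\sigma=\tau$. The clean way is the substitution $\sigma=\tau+u$, giving $\int_0^\infty u^{-\si}e^{-(\tau+u)|\xi|^2}\,du$, after which the factor $e^{-\tau|\xi|^2}$ pulls out and carries the decay, $e^{-u|\xi|^2}$ with $|\xi|^2\asymp1$ controls the tail, and each differentiation costs at most a polynomial factor $(\tau+u)^{|\be|}\le c(\tau^{|\be|}+u^{|\be|})$ that is still integrable against $u^{-\si}e^{-u/16}$ since $\si<1$. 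Once this pointwise estimate is in hand, the rest is a routine repetition of the proof of Lemma \ref{multiplier2}.
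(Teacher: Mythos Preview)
Your proposal is correct and follows essentially the same route as the paper: both rescale to a multiplier supported on a fixed annulus, estimate the mixed derivatives $D^\be_\xi$ there by shifting the $s$-integral (the paper writes $\int_t^\infty(\cdots)\,ds$ and then substitutes $s\mapsto s+t$, you substitute $\sigma=\tau+u$ after first normalizing out $2^{2\si j}$), use $\si<1$ so that $\int_0^\infty u^{l-\si}e^{-cu}\,du<\infty$, and then invoke Theorem~4.6$'$ of \cite{St}. The only cosmetic difference is that you extract the factor $2^{2\si j}$ at the outset via the change of variables $s=2^{-2j}\sigma$, whereas the paper carries the $2^{2j}$ through the computation and reads off $2^{2\si j}$ at the end.
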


\begin{proof}
We note that the $L^p(\mathbb{R}^n)$-multiplier norm $N(t,j)$ of
$\te_{tj}(\xi) $ is equal to the $L^p(\mathbb{R}^n)$-multiplier
norm of $\te_{tj}^{'}(\xi)  := \Phi(\xi) 2^{2j} |\xi|^2 \int_t^\infty \frac{ e^{-s2^{2j}|\xi|^2}} {(s -t)^\si} ds $
(see \cite[Theorem 6.1.3]{BL}). Let $l$ be an integer with $1\le
l\le N$ and $\be=(\be_1, \be_2, \cdots, \be_n)\in\mathbb R^n$.
Suppose $\{{i_1}, {i_2}, \cdots, {i_l}\}\subset\{1, 2,\cdots, n\}$
and assume $\be_{i_1}=\be_{i_2}=\cdots =\be_{i_l} =1$ and $\be_{i}
=0$ for $i\neq i_k$ with $k=1, \cdots, l$.
%, and set $\be=(\be_1, \be_2, \cdots, \be_n)$.
Since ${\rm supp}\;(\Phi) \subset \{\xi \in \mathbb{R}^n \, | \,
\frac14 < |\xi| < 4 \}$, we have
\begin{align*}
|D^\be_{\xi} \te_{tj}^{'}(\xi)| &\lesssim   2^{2j}  \chi_{\{\frac14
< |\xi| < 4\}} (\xi)  \sum_{0 \leq i \leq
|\be|}    \int_t^\infty \frac{ (s 2^{2j})^i e^{-\frac14 s2^{2j}}} {(s -t)^\si} ds\\
& =   2^{2j} e^{-\frac14 t2^{2j}}  \chi_{\{\frac14 < |\xi| < 4\}}
(\xi)  \sum_{0 \leq i \leq
|\be|}  2^{2ij}  \int_0^\infty \frac{ (s+t)^i e^{-\frac14 s2^{2j}}} {s^\si} ds\\
& =   2^{2j} e^{-\frac14 t2^{2j}}  \chi_{\{\frac14 < |\xi| < 4\}}
(\xi)  \sum_{0 \leq i \leq
|\be|}  2^{2ij} \sum_{0 \leq l \leq i}  t^{i-l}  \int_0^\infty  s^{l -\si} e^{-\frac14 s2^{2j}}  ds\\
& =   2^{2j} e^{-\frac14 t2^{2j}}  \chi_{\{\frac14 < |\xi| < 4\}}
(\xi)  \sum_{0 \leq i \leq
|\be|}  2^{2ij} \sum_{0 \leq l \leq i}  t^{i-l} 2^{-2j(l -\si +1)} \int_0^\infty  s^{l -\si} e^{-\frac14 s}  ds\\
& \lesssim    2^{2\si j} e^{-\frac14 t2^{2j}}  \chi_{\{\frac14 <
|\xi| < 4\}} (\xi)    \sum_{0   \leq i \leq n}  t^i2^{2ij}.
\end{align*}
Hence,
for $A = \prod_{1 \leq i \leq l} [2^{k_i}, 2^{k_i +1}]$ we obtain
\begin{align*}
\int_A \left|D^\be_{\xi} \te_{tj}^{'}(\xi) \right| d\xi_\be \leq c  2^{2\si j}  e^{-\frac14t 2^{2 j}} \sum_{0 \leq i \leq n} t^i
2^{2 ij}.
\end{align*}
Due to Theorem 4.6\'{} in \cite{St}, we deduce the lemma.
\end{proof}

We are ready to give the proof of Proposition \ref{E's}.

\begin{prop4}
We only treat the case that $ u_0\in \dot B^{k-\frac{2}{p}}
({\mathbb R}^n)$, since the proof of the case for $B^{k-\frac{2}{p}}
({\mathbb R}^n)$ is similar. We first show that for $k=0$
\begin{align}\label{boundary2}
\|v\|_{L_x^pL^q_t} \leq c \;\| u_0\|_{ {\dot B}^{-\frac{2}q}_{pq}
({\mathbb R}^n)}.
\end{align}
As usual, we may assume that $u_0  \in C^\infty_c ({\mathbb R}^n)$,
since $C^\infty_c({\mathbb R}^n)$ is dense in $ {\dot
B}^{-\frac{2}q}_{pq}({{\mathbb R}^{n-1}})$. Using the dyadic
partition of unity $\hat{\psi} (\xi) + \sum_{j=1}^{\infty} \hat
\phi(2^{-j} \xi) =1$ for $\xi \in {{\mathbb R}^{n-1}}$, we can write
\begin{align*}
\hat v(\xi,t) = %\hat\psi (\xi) e^{-t|\xi|^2}\widehat{ u_0}(\xi)+
\sum_{j=-\infty}^\infty \hat\phi (2^{-j} \xi) e^{-t|\xi |^2}
\widehat{ u_0}(\xi).
\end{align*}
For $t>0$ we have
\begin{equation}\label{u_1}
\int_{{\mathbb R}^n}  | v(x,t)|^pdx\leq \int_{{\mathbb R}^n}
\left|{\mathcal F}^{-1} \Big(\sum_{j=-\infty}^\infty e^{-t|\xi|^2}
\hat \phi_j(\xi) \;\widehat{ u_0}(\xi) \Big)(x)\right|^p dx.
\end{equation}
We note that $\hat \phi_j = \Phi_j \hat \phi_j$  for all $j$, where
$\Phi_j$ is defined in Lemma \ref{multiplier2} and by Lemma
\ref{multiplier2}, $\ \Phi_j( \xi) e^{-t |\xi|^2}$ is the
$L^p({\mathbb R}^n)$- multipliers with the norms $M(t,j)$.
Then we divide the sum as
\begin{align*}
&\int_{{\mathbb R}^n} \left|{\mathcal F}^{-1}
\Big(\sum_{j=-\infty}^\infty e^{-t|\xi|^2} \hat \phi_j(\xi)\;
\widehat{ u_0}(\xi) \Big)(x)\right|^p dx = \int_{{\mathbb R}^n}
\left|{\mathcal F}^{-1} \Big(\sum_{j=-\infty}^\infty \Phi_j( \xi)
e^{-t|\xi|^2} \hat \phi_j(\xi)\;\widehat{
u_0}(\xi) \Big)(x)\right|^p dx\\
& \leq \Big(\sum_{2^{2j}\le1/t} M(t,j) \|  u_0 * \phi_j\|_{L^p}
\Big)^p + \Big(\sum_{2^{2j}\ge1/t} M(t,j) \|  u_0 *\phi_j\|_{L^p}
\Big)^p=: I_1(t) + I_2(t).
\end{align*}
Here, ${\mathcal F}^{-1}$ is the inverse Fourier transform in $\R$.
By Lemma \ref{multiplier2} we have $ M(t,j) \leq c$  for $t 2^{2 j}
\leq 1$. We  take $a$ satisfying $ -\frac{2}q < a < 0$ and then use
H\"older inequality to get
\[
\int_0^\infty I_1(t)^{\frac{q}p} dt\lesssim \int_0^\infty
\Big(\sum_{2^{2j}\le1/t} 2^{-\frac{q}{q-1}aj} \Big)^{q-1}
\sum_{2^{2j}\le1/t} 2^{qaj} \|\phi_j* u_0\|^q_{L^p}dt
\]
\[
\lesssim  \int_0^\infty t^{\frac12 qa } \sum_{2^{2j}\le1/t} 2^{qaj}
\|\phi_j* u_0\|^q_{L^p}dt
\]
\[
\lesssim \sum_{j=-\infty}^\infty 2^{qaj} \|\phi_j* u_0\|^q_{L^p}
\int_0^{2^{-2 j}}  t^{\frac12 qa }dt= c \sum_{j=-\infty}^\infty
2^{-2 j } \|\phi_j* u_0\|^q_{L^p}.
\]
Using Lemma \ref{multiplier2} again, we have $ M(t,j) \leq c (t2^{2
j})^{-m} \sum_{0 \leq i \leq n}(t 2^{2 j})^i \leq c 2^{(2 n-2 m)j}
t^{n-m} $ for $t\cdot 2^{2 j} \geq 1$ and $m>0$. We fix $b>0$ and
then choose $m$ satisfying $ q\,(n-m) + \frac12 q\,b +1 < 0$, so
that we obtain
\[
\int_0^\infty I_2(t)^{\frac{q}p} dt \lesssim \int_0^T
\Big(\sum_{2^{2j}\ge1/t} 2^{(2 n- 2 m)j} t^{n-m} \|\phi_j*
u_0\|_{L^p} \Big)^qdt
\]
\[
\lesssim \int_0^\infty t^{q ( n -  m) } \Big(\sum_{2^{2j}\ge1/t}
2^{-\frac{q}{q-1} bj} \Big)^{q-1} \sum_{2^{2j}\ge1/t} 2^{qbj}2^{q(2
n-2 m)j} \|\phi_j* u_0\|^q_{L^p}dt
\]
\[
\lesssim \int_0^\infty t^{ q(  n- m )  + \frac12 qb}
\sum_{2^{2j}\ge1/t} 2^{qbj}2^{q(2 n-2 m)j} \|\phi_j* u_0\|^q_{L^p}dt
\]
\[
\lesssim \sum_{j=-\infty}^\infty 2^{qbj}2^{q(2 n-2 m)j} \|\phi_j*
u_0\|^q_{L^p} \int_{2^{-2j}}^\infty t^{q ( n - m )  + \frac12 qb}dt
=c \sum_{j=-\infty}^\infty 2^{-2 j  } \|\phi_j* u_0\|^q_{L^p}.
\]
Therefore, we obtain \eqref{boundary2}. For  $k  \in {\bf N}$ with
$k\geq 1$, we have
\begin{align*}
D^k_x v(x,t) = \int_{{\mathbb R}^n} \Ga(x-y, t) D^k_y u_0(y) dy.
\end{align*}
Using the estimate \eqref{boundary2}, we obtain
\begin{equation}\label{w-2}
\int_0^\infty  \|D^k_x v(\cdot,t)\|_{L^p({\mathbb R}^n)}^q dt \leq c\|D^k
u_0\|^q_{\dot B^{-\frac2q}_{pq}( {\mathbb R}^n)} \leq c\|
u_0\|^q_{\dot B^{k-\frac2q}_{pq}( {\mathbb R}^n)}.
\end{equation}
%\[
%\int_0^T \| D^k_t v(x, \cdot)\|_{L^p({\mathbb R}^n)}^q dx = \int_0^T
%\| \De^{k} v(x, \cdot)\|_{L^p({\mathbb R}^n)}^q dt
%\]
%\begin{equation}\label{CK200-april9}
%\leq c\|\De^k u_0\|^q_{\dot B^{-\frac2q}_{pq}( {\mathbb R}^n)} \leq
%c\| u_0\|^q_{\dot B^{2k-\frac2q}_{pq}( {\mathbb R}^n)}.
%\end{equation}
%Hence, we have
%\begin{align*}
%\| v\|_{L^q (0,T; \dot H^{2k}_{p} ({\mathbb R}^n ))} \leq c \|
%u_0\|^p_{\dot B^{2k-\frac2q}_{pq}( {\mathbb R}^n)}.
%\end{align*}
%Moreover, for $0 < \al < 2$ using the following complex
%interpolation
%\begin{align*}
%[L^p_xL^q_t ({\bf  R}^n \times (0,T)), \dot H^{2k_1, 2k_1}_{pq}
%({\mathbb R}^n \times (0,T))]_{\te,p}& = \dot H^{2k, k}_{pq}
%({\mathbb R}^n \times (0,T)),\\
%[\dot B^{-\frac2p}_{pq}({\mathbb R}^n), \dot
%B^{2k_1-\frac2p}_{pq}({\mathbb R}^n)]_{\te,p}&=\dot
%B^{2k-\frac2p}_{pq}({\mathbb R}^n)
%\end{align*}
%with $0 \le\te \le 1$ and $k = \te k_1$, we obtain
%\begin{align*}
%   \|v\|_{L^q(0,T; \dot H_{p}^{k}({\mathbb R}^n ))}
%   \leq c\|  u_0\|_{\dot B^{k-\frac2q}_{pq}( {\mathbb R}^n)}.
%\end{align*}
%
Next, we note that
\begin{align*}
\widehat{ D^{*\frac12 \al}_t v}(\xi,t) = %\hat\psi (\xi) e^{-t|\xi|^2}\widehat{ u_0}(\xi)+
\sum_{j=-\infty}^\infty \hat\phi (2^{-j} \xi)|\xi|^2 \int_t^\infty \frac{ e^{-s|\xi |^2}}{(s-t)^{\frac12 \al}} ds
\widehat{ u_0}(\xi).
\end{align*}
By the same argument, we get
\begin{align*}%\label{u_1-t}
\int_{{\mathbb R}^n}  | D^{*\frac12 \al}_t v(x,t)|^pdx
& \leq \Big(\sum_{2^{2j}\le1/t} N(t,j) \|  u_0 * \phi_j\|_{L^p}
\Big)^p + \Big(\sum_{2^{2j}\ge1/t} N(t,j) \|  u_0 *\phi_j\|_{L^p}
\Big)^p\\
&: = J_1(t) + J_2(t).
\end{align*}
Due to Lemma \ref{multiplier3}, we have $ N(t,j) \leq c 2^{\frac12
\al j} $  for $t 2^{2 j} \leq 1$. We  take $a$ satisfying $
-\frac{2}q < a < 0$ and then use H\"older inequality to get
\[
\int_0^\infty J_1(t)^{\frac{q}p} dt\lesssim \int_0^\infty
\Big(\sum_{2^{2j}\le1/t} 2^{-\frac{q}{q-1}aj} \Big)^{q-1}
\sum_{2^{2j}\le1/t} 2^{qaj +  q \al} \|\phi_j* u_0\|^q_{L^p}dt
\]
\[
\lesssim  \int_0^\infty t^{\frac12 qa } \sum_{2^{2j}\le1/t} 2^{qaj +  q \al}
\|\phi_j* u_0\|^q_{L^p}dt
\]
\[
\lesssim \sum_{j=-\infty}^\infty 2^{qaj +  q \al} \|\phi_j* u_0\|^q_{L^p}
\int_0^{2^{-2 j}}  t^{\frac12 qa }dt= c \sum_{j=-\infty}^\infty
2^{  q \al-2 j } \|\phi_j* u_0\|^q_{L^p}.
\]
Via Lemma \ref{multiplier3} we note that $ N(t,j) \leq c (t2^{2
j})^{-m} \sum_{0 \leq i \leq n}(t 2^{2 j})^i \leq c 2^{(2 n-2 m)j}
t^{n-m} $ for $t\cdot 2^{2 j} \geq 1$ and $m>0$. We fix $b>0$ and
then choose $m$ satisfying $ q\,(n-m) + \frac12 q\,b +1 < 0$, so
that we obtain
\[
\int_0^\infty J_2(t)^{\frac{q}p} dt \lesssim \int_0^\infty
\Big(\sum_{2^{2j}\ge1/t} 2^{(2 n- 2 m + \al)j} t^{n-m} \|\phi_j*
u_0\|_{L^p} \Big)^qdt
\]
\[
\lesssim \int_0^\infty t^{q ( n -  m) } \Big(\sum_{2^{2j}\ge1/t}
2^{-\frac{q}{q-1} bj} \Big)^{q-1} \sum_{2^{2j}\ge1/t} 2^{qbj}2^{q(2
n-2 m + \al)j} \|\phi_j* u_0\|^q_{L^p}dt
\]
\[
\lesssim \int_0^\infty t^{ q(  n- m )  + \frac12 qb}
\sum_{2^{2j}\ge1/t} 2^{qbj}2^{q(2 n-2 m +\al)j} \|\phi_j* u_0\|^q_{L^p}dt
\]
\[
\lesssim \sum_{j=-\infty}^\infty 2^{qbj}2^{q(2 n-2 m +\al)j} \|\phi_j*
u_0\|^q_{L^p} \int_{2^{-2j}}^\infty t^{q ( n - m )  + \frac12 qb}dt
=c \sum_{j=-\infty}^\infty 2^{\al j-2 j  } \|\phi_j* u_0\|^q_{L^p}.
\]
Using the estimate \eqref{boundary2}, we obtain
\begin{align}\label{w-2-2}
\int_0^\infty \| D_t^{*\frac12 \al} v(x, \cdot)\|_{L^p({\mathbb R}^n)}^q dx  \leq
c\| u_0\|^q_{\dot B^{\al-\frac2q}_{pq}( {\mathbb R}^n)}.
\end{align}
Hence, from \eqref{w-2} and \eqref{w-2-2}, we complete the proof of Proposition \ref{E's}.
\end{prop4}

Returning to Stokes system \eqref{C10K-april9} and
\eqref{C20K-april9}, via the solution formulae
\eqref{ukai-formula-v}-\eqref{ukai-formula-v} and the boundedness of
$V_1, V_2, U$ and $E$ (see \eqref{rs}-\eqref{vs} and Proposition
\ref{E's}), we have
\begin{align}\label{u-n}
\| v\|_{\dot H^{\al, \frac12 \al }_{pq} ({\mathbb R}^n_+ \times
(0,T))} &\leq c \|    u_0 \|_{\dot B^{\al -\frac2p}_{pq,0} ({\mathbb
R}^n_+)}.
\end{align}
For the estimate of  pressure, since $D: \dot B^{k -\frac1p}_{p}
({\mathbb R}^{n-1}) \ri \dot H^{k }_p ({\mathbb R}^n_+)$ with $k
\geq 0$ is a bounded operator, respectively, we obtain that for $\al
> 1 + \frac1p$,
\[
\int_0^T\| p(\cdot, t)\|^q_{\dot H^{\al -1  }_p ({\mathbb R}^n_+
 )} dt=  \int_0^T \| D \ga \pa_n E(t) V_1 u_0\|^q_{\dot H_p^{\al -1} ({\mathbb R}^n_+  )} dt
\]
\[
\leq  c \int_0^T \|   \pa_n E(t) V_1 u_0\|^q_{\dot H_p^{\al -1}
({\mathbb R}^n_+)} dt
\]
\begin{equation}\label{CK100-april10}
\leq c \int_0^T \|    E(t) V_1 u_0\|^q_{\dot H^{\al }_p ({\mathbb
R}^n_+  )}  dt \leq c\| u_0\|_{\dot B^{\al -\frac2q }_{pq,0}
({\mathbb R}^n_+ )}.
\end{equation}
Combining \eqref{u-n}-\eqref{CK100-april10}, we conclude that
$(v,p)$ of Stokes system \eqref{C10K-april9} and \eqref{C20K-april9}
satisfies the following estimate:
\begin{equation}\label{CK110-april10}
\| v\|_{\dot H^{\al, \frac12 \al }_{pq} ({\mathbb R}^n_+ \times
(0,T))}+  \| p\|_{L^q ((0,T); \dot H^{\al -1 }_p({\mathbb R}^n_+))}
\leq c \|    u_0 \|_{\dot B^{\al -\frac2q}_{pq,0} ({\mathbb
R}^n_+)}.
\end{equation}
As mentioned earlier, following similar procedures as above, we can
prove that
\begin{equation}\label{CK120-april10}
\| v\|_{H^{\al, \frac12 \al }_p ({\mathbb R}^n_+ \times (0,T))}+ \|
p\|_{L^q((0,T); H^{\al -1}_p ({\mathbb R}^n_+ ))}  \leq c \|    u_0
\|_{B^{\al -\frac2q}_{pq,0} ({\mathbb R}^n_+)}.
\end{equation}
Since the proof of \eqref{CK120-april10} is on the same track, we
skip its details. Summing up above results, the proof of Theorem
\ref{maintheo-stokes} is fulfilled.

\begin{proof-stokes}
Since the solution of \eqref{maineq2}-\eqref{maineq2-10} is the sum
of solutions for \eqref{C10K-april7}-\eqref{C20K-april7} and
\eqref{C10K-april9}-\eqref{C20K-april9}, the proof can be done by
combining estimates \eqref{C110K-april7}-\eqref{C200K-april10} and
\eqref{CK110-april10}-\eqref{CK120-april10} established in section 3
and Section 4. Uniqueness of solution can be established by using
duality considerations by following the argument in \cite[Theorem
2.1]{So2}. More precisely, let $\tilde{v}$ and $\hat{v}$ be two
solutions in the sense of distributions \eqref{CK-july24-100}. We
set $w:=\tilde{v} - \hat{v}$, which is certainly in $L^q (
(0,T);L^p(\R_+))$ and we then observe that for any $T' < T$ the
following equality is satisfied:
\[
\int^{T'}_0\int_{{\mathbb R}^n_+}w\cdot(-\phi_t-\Delta \phi+\nabla
\pi)dxdt=0
\]
for any compactly supported smooth vector $\phi$ with $\phi|_{t =T'}
=0$ and smooth scalar function $\pi$. Since the vector field
$-\phi_t-\De \phi+\na \pi$ is dense in $L^{q'} (
(0,T);L^{p'}(\R_+))$, it follow that $w=0$. This completes the
proof.
\end{proof-stokes}

%%%%%%%%%%%%%%%%%%%%%%%%%%%%%%%%%%%%%%%%%%%%%%%%%%%%%%%%%%%%%%%%%%%%%%%%%%%%%%%%%%%%%%%%%%%
%%%%%%%%%%%%%%%%%%%%%%%%%%%%%%%%%%%%%%%%%%%%%%%%%%%%%%%%%%%%%%%%%%%%%%%%%%%%%%%%%%%%%%%%%%%
%%%%%%%%%%%%%%%%%%%%%%%%%%%%%%%%%%%%%%%%%%%%%%%%%%%%%%%%%%%%%%%%%%%%%%%%%%%%%%%%%%%%%%%%%%%

\section{Proofs of Theorem \ref{maintheo-stokes-g} and Theorem \ref{main-nse-est}}

In this section, we present the proofs of Theorem
\ref{maintheo-stokes-g} and Theorem \ref{main-nse-est}. We begin
with Theorem \ref{maintheo-stokes-g}, which is a direct consequence
of Theorem \ref{maintheo-stokes}.

\begin{proof-stokes-g}
\quad In Theorem \ref{maintheo-stokes-g}, Since $F\in
L^q(0,T;H^{\beta}_{p,0}(\R_+))$, we note that $\nabla\cdot F$
belongs to $L^q(0,T;H^{-1+\beta}_{p,0}(\R_+)$ and
\begin{equation}\label{CK-june12-100}
\norm{\nabla\cdot F}_{L^q(0,T;H^{-1+\beta}_{p,0}(\R_+)}\leq
c\norm{F}_{L_q(0,T;H^{\beta}_{p,0}(\R_+))}.
\end{equation}
On the other hand, due to the result of Theorem
\ref{maintheo-stokes-g}, we obtain
\begin{equation}\label{CK-june12-200}
\| v\|_{H^{1+\beta, \frac{1+\beta}{2}}_{pq}(\R_+ \times (0,T))} \leq
c \|\nabla\cdot F\|_{H^{-1+\beta, \frac{-1+\beta}{2}}_{pq,0}
({\mathbb R}^n_+ \times (0,T))}+c \|
v_0\|_{{B}^{1+\beta-\frac{2}{q}}_{pq,0}(\R_+)},\qquad 0\le\beta\le
1.
\end{equation}
\begin{equation}\label{CK-june12-300}
\| p\|_{L_q(0,T; H^{\beta}_p(\R_+))}\leq c \| \nabla\cdot
F\|_{L_q(0,T;H^{-1+\beta}_{p,0}(\R_+))}+c \|
v_0\|_{{B}^{1+\beta-\frac{2}{q}}_{pq,0}(\R_+)},\qquad
\frac{1}{p}<\beta\le 1.
\end{equation}
We observe via the estimate \eqref{CK-june12-100} that
\begin{equation}\label{CK-june12-400}
\|\nabla\cdot F\|_{H^{-1+\beta, \frac{-1+\beta}{2}}_{pq,0} ({\mathbb
R}^n_+ \times (0,T))}\leq \norm{\nabla\cdot
F}_{L^q(0,T;H^{-1+\beta}_{p,0}(\R_+)}\leq
c\norm{F}_{L_q(0,T;H^{\beta}_{p,0}(\R_+))}.
\end{equation}
Combining \eqref{CK-june12-200}-\eqref{CK-june12-400}, the estimates
\eqref{CK-est-300} and \eqref{CK-est-310} are immediate.
\end{proof-stokes-g}

We first recall the definition of weak solutions for the
Navier-Stokes equations.
\begin{defin}\label{weak-def}
%Let $Q=\Omega\times \mathbb R_+$.
Let $v_0 \in L^2(\Omega)$ with ${\rm{div}} \, v_0 = 0$. We say $v$
is a weak solution of \eqref{nse-10-march22}-\eqref{nse-20-march22}
if $v$ satisfies the following:
\begin{itemize}
\item[(i)]  $v \in L^{\infty}(0,T;L^2(\Omega)) \cap L^2(0,T
;H^1_0(\Omega))$ and $v$ satisfies
\begin{equation*}
\int_0^{T}\int_{\Omega}\bke{\frac{\partial\phi}{\partial
t}+(u\cdot\nabla)\phi}udxdt+\int_{\Omega}u_0\phi(x,0)dx
=\int_0^{T}\int_{\Omega}\nabla u :\nabla\phi dxdt
\end{equation*} for all $\phi\in
\calC_0^{\infty}(\Omega\times [0,T))$ with ${\rm{div}} \, \phi=0$.
\item[(ii)] $v$ satisfies divergence free condition, that is,
$\int_{\Omega} u\cdot\nabla\psi dx=0$ for any
$\psi\in\calC^{\infty}(\bar{\Omega})$.
%\item[(iii)] $v$ satisfies the energy inequality: For all $t\ge0$
%\[
%\int_{\Omega}\abs{v(x,t)}^2 dx+2\int_0^t
%\int_{\Omega}\abs{v(x,\tau)}^2 dx d\tau\leq
%\int_{\Omega}\abs{v_0(x)}^2 dx
%\]
%\item[(iii)] $v$ is weakly continuous from $[0,\infty)$ to
%$L^2(\Omega)$.
\end{itemize}
\end{defin}

Next, we decompose the nonlinear term of Helmholtz type such that
the result of Theorem \ref{maintheo-stokes} can be applicable.
%
%In the sequel, $\Omega$ indicates $\mathbb R^3_+$, unless there is
%any confusion to be expected.
\begin{lemm}\label{decompositon-helmholtz}
Suppose that the assumptions in Theorem \ref{main-nse-est} hold.
%Let
%$v$ be a weak solution of
%\eqref{nse-10-march22}-\eqref{nse-20-march22} and $p$ the associated
%pressure.
Then the nonlinear term $(v \cdot \nabla ) v$ is decomposed as ${\rm
div}\,(v\otimes v)=-\nabla \pi+ w$, where $w$ has zero divergence
and $\pi$ solves in a weak sense
\begin{equation}\label{nse-60}
\left\{
\begin{array}{cl}
\displaystyle-\Delta \pi=\partial_{x_i}\partial_{x_j}(v_i v_j)
&\qquad\mbox{ in }\,\, \mathbb{R}^3_+,
\\
\vspace{-3mm}\\
\displaystyle \pi=0&\qquad\mbox{ on }\,\, \partial\mathbb{R}^3_+
\end{array}\right.
\end{equation}
such that the following estimate is satisfied: for almost all $t\in
I$
%\begin{equation}\label{nse-95}
%\norm{w(\cdot,t)}_{\dot{H}^{-1}_{q,0}({\mathbb{R}}^3_+) }\leq
%C\norm{v(\cdot,t)}^2_{L^{2q}(\mathbb{R}^3_+)},\qquad 1<q\leq
%\frac{5}{3}.
%\end{equation}
%{\tt... Instead, we need to show at this moment the following:...}
\begin{equation}\label{CK-june13-100}
\norm{\pi(\cdot,t)}_{\dot{H}^{\beta}_{p}({\mathbb{R}}^3_+)}+\norm{w(\cdot,t)}_{\dot{H}^{\beta-1}_{p,0}({\mathbb{R}}^3_+)
}\leq C\norm{(v\otimes
v)(\cdot,t)}_{\dot{H}^{\beta}_{p,0}(\mathbb{R}^3_+)}.
%,\qquad
%1<q<\frac{3}{2}.
\end{equation}
\end{lemm}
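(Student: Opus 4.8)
The plan is to build the decomposition pointwise in $t$ and then observe that every operation used is $t$-independent. Fix $t\in I$ and set $G=(G_{ij}):=(v\otimes v)(\cdot,t)$, a symmetric tensor which we may assume lies in $\dot H^{\beta}_{p,0}(\mathbb{R}^3_+)$ (otherwise the right-hand side of \eqref{CK-june13-100} is infinite). The idea is to solve \eqref{nse-60} as a whole-space Riesz-transform term corrected by a harmonic function that removes the boundary trace. Let $\bar G$ be the extension of $G$ by zero to $\mathbb{R}^3$; since $\dot H^{\beta}_{p,0}(\mathbb{R}^3_+)$ is the closure of $C^\infty_c(\mathbb{R}^3_+)$, the zero extension is bounded into $\dot H^{\beta}_p(\mathbb{R}^3)$ (as in the Besov case recorded after \eqref{vs}), so $\|\bar G\|_{\dot H^{\beta}_p(\mathbb{R}^3)}\le c\|G\|_{\dot H^{\beta}_{p,0}(\mathbb{R}^3_+)}$. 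Put $\Pi:=\sum_{i,j=1}^{3}R_iR_j\bar G_{ij}$ with $R_i$ the Riesz transforms of $\mathbb{R}^3$. A Fourier-side computation gives $-\Delta\Pi=\partial_{x_i}\partial_{x_j}\bar G_{ij}$ in $\mathbb{R}^3$, and by the boundedness of the Riesz transforms on $\dot H^{\beta}_p(\mathbb{R}^3)$ (see \eqref{rs}) we get $\|\Pi\|_{\dot H^{\beta}_p(\mathbb{R}^3)}\le c\|G\|_{\dot H^{\beta}_{p,0}(\mathbb{R}^3_+)}$.

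Next I would correct the boundary value. Restricting to $\mathbb{R}^3_+$ we have $\Pi\in\dot H^{\beta}_p(\mathbb{R}^3_+)$ with $\|\Pi\|_{\dot H^{\beta}_p(\mathbb{R}^3_+)}\le\|\Pi\|_{\dot H^{\beta}_p(\mathbb{R}^3)}$, and since $\beta>\frac1p$ the trace $g:=\Pi|_{x_3=0}$ exists in $\dot B^{\beta-\frac1p}_p(\mathbb{R}^2)$ with $\|g\|_{\dot B^{\beta-\frac1p}_p(\mathbb{R}^2)}\le c\|\Pi\|_{\dot H^{\beta}_p(\mathbb{R}^3_+)}$ by Proposition \ref{prop-2}(ii) (see \eqref{CK15-april11}). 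Let $h$ be the Poisson (harmonic) extension of $g$ to $\mathbb{R}^3_+$; being harmonic, $h$ satisfies $\|h\|_{\dot H^{\beta}_p(\mathbb{R}^3_+)}\approx\|g\|_{\dot B^{\beta-\frac1p}_p(\mathbb{R}^2)}$ by \eqref{CK300-april10}, so $\|h\|_{\dot H^{\beta}_p(\mathbb{R}^3_+)}\le c\|G\|_{\dot H^{\beta}_{p,0}(\mathbb{R}^3_+)}$. Define $\pi:=\Pi|_{\mathbb{R}^3_+}-h$. Since $h$ is harmonic and $\bar G=G$ on $\mathbb{R}^3_+$, we get $-\Delta\pi=\partial_{x_i}\partial_{x_j}(v_iv_j)$ in $\mathbb{R}^3_+$, while $\pi|_{x_3=0}=g-g=0$; thus $\pi$ solves \eqref{nse-60} in the weak sense and $\|\pi(\cdot,t)\|_{\dot H^{\beta}_p(\mathbb{R}^3_+)}\le\|\Pi\|_{\dot H^{\beta}_p(\mathbb{R}^3_+)}+\|h\|_{\dot H^{\beta}_p(\mathbb{R}^3_+)}\le c\|(v\otimes v)(\cdot,t)\|_{\dot H^{\beta}_{p,0}(\mathbb{R}^3_+)}$, which is the first half of \eqref{CK-june13-100}.

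Finally, set $w:=\mathrm{div}(v\otimes v)+\nabla\pi$, i.e.\ $w_i=\partial_{x_j}(v_iv_j)+\partial_{x_i}\pi$. Then $\mathrm{div}\,w=\partial_{x_i}\partial_{x_j}(v_iv_j)+\Delta\pi=0$ by the equation for $\pi$, so $w$ is divergence free and $\mathrm{div}(v\otimes v)=-\nabla\pi+w$. For the estimate, $\mathrm{div}$ maps $\dot H^{\beta}_{p,0}(\mathbb{R}^3_+)$ boundedly into $\dot H^{\beta-1}_{p,0}(\mathbb{R}^3_+)$ (standard; cf.\ \eqref{CK-june12-100}), so $\|\mathrm{div}(v\otimes v)(\cdot,t)\|_{\dot H^{\beta-1}_{p,0}(\mathbb{R}^3_+)}\le c\|(v\otimes v)(\cdot,t)\|_{\dot H^{\beta}_{p,0}(\mathbb{R}^3_+)}$; and since $\frac1p<\beta\le1<1+\frac1p$ and $\pi|_{x_3=0}=0$, the characterization of $\dot H^{\beta}_{p,0}$ gives $\pi(\cdot,t)\in\dot H^{\beta}_{p,0}(\mathbb{R}^3_+)$, hence $\nabla\pi(\cdot,t)\in\dot H^{\beta-1}_{p,0}(\mathbb{R}^3_+)$ with norm $\le c\|\pi(\cdot,t)\|_{\dot H^{\beta}_p(\mathbb{R}^3_+)}$. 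Adding the two bounds yields \eqref{CK-june13-100}. Measurability in $t$ is immediate since $\pi$ and $w$ arise from a fixed composition of bounded linear operators applied to $t\mapsto(v\otimes v)(\cdot,t)$.

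I expect the main obstacle to be the harmonic corrector $h$: one must return to the homogeneous ($\dot H$, $\dot B$) scale precisely so that the equivalence \eqref{CK300-april10} for harmonic functions applies with a constant independent of $T$, and this is where the hypothesis $\beta>\frac1p$ is genuinely used, namely to make the trace of $\Pi$ meaningful in $\dot B^{\beta-\frac1p}_p(\mathbb{R}^2)$. A secondary but necessary point is checking that $w$ and $\nabla\pi$ lie in the zero-trace subspace $\dot H^{\beta-1}_{p,0}(\mathbb{R}^3_+)$ (the dual of $\dot H^{1-\beta}_{p'}(\mathbb{R}^3_+)$), not merely in $\dot H^{\beta-1}_p(\mathbb{R}^3_+)$; this follows from $\pi|_{x_3=0}=0$, the identification $\dot H^{\beta}_{p,0}=\{f\in\dot H^{\beta}_p:\ f|_{x_3=0}=0\}$ valid for $\frac1p<\beta<1+\frac1p$, and a duality argument for $\nabla$.
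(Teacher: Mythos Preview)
Your argument is correct and carefully justifies each step. The paper itself does not give a proof of this lemma: it simply states that the bound on $\pi$ follows from ``standard theories of elliptic equations'' with a pointer to Lemma~\ref{lemma-2}, and then defines $w=\nabla\pi+\mathrm{div}(v\otimes v)$ so that \eqref{CK-june13-100} is ``immediate.'' The reference to Lemma~\ref{lemma-2} indicates the route the authors have in mind, namely solving the Dirichlet problem for $-\Delta\pi=\partial_i\partial_j(v_iv_j)$ directly with the half-space Green's function (odd reflection), as is done there for the Poisson problem $\Delta w=\mathrm{div}\,F$ with zero boundary data.

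Your construction takes a different but equally standard path: you build the whole-space potential $\Pi=R_iR_j\bar G_{ij}$ and subtract the Poisson extension of its trace. What this buys you is that all estimates reduce to Riesz-transform boundedness \eqref{rs} together with the trace/extension equivalence \eqref{CK300-april10}, so the role of the hypothesis $\beta>1/p$ is made completely explicit (it is exactly what is needed for the trace of $\Pi$ to make sense in $\dot B^{\beta-1/p}_p$). The Green's-function approach suggested by Lemma~\ref{lemma-2} avoids introducing the harmonic corrector $h$ but bundles the same ingredients into the half-space representation formula; the net estimates are the same. Your final paragraph verifying that $\nabla\pi$ and $w$ land in the zero-trace space $\dot H^{\beta-1}_{p,0}$ (rather than merely in $\dot H^{\beta-1}_p$) is a point the paper leaves unsaid, and your duality argument using $\pi|_{x_3=0}=0$ is the right way to see it.
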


Since standard theories of elliptic equations imply that
$\norm{\pi}_{\dot{H}^{\beta}_{p}({\mathbb{R}}^3_+)}\leq
C\norm{(v\otimes
v)(\cdot,t)}_{\dot{H}^{\beta}_{p,0}(\mathbb{R}^3_+)}$ (see also
Lemma \ref{lemma-2}) and $w=\nabla \pi+{\rm div}\,(v\otimes v)$, the
estimate \eqref{CK-june13-100} is immediate, and therefore we skip
the details of Lemma \ref{decompositon-helmholtz}. Next lemma shows
an estimate of interpolation for fractional derivative of square
functions.
\begin{lemm}\label{interpolation-june13}
Let $0 < \be \le 1$, $1 \leq p < \infty$ and $1/{p_1} + 1/{p_2}
=1/p$ with $1<p_2<\infty$.
%
%Suppose that $u\in L^{2p}(\R_+)\cap H^1_{\tilde{p}}(\R_+)$, where
%$\tilde{p}=\max\{2, p_2\}$.
%
Then, the following a priori estimate holds:
\begin{equation}\label{CK-june14-20}
\| u^2\|_{ H^\be_p(\R_+)} \leq c \Big( \| u^2\|_{L^p(\R)} +  \|
u\|_{L^{p_1} (\R_+)}\| u\|^{1-\be}_{L^{p_2}(\R_+)} \|
u\|_{H^1_{p_2}(\R_+)}^\be +  \| u\|^{2-\be}_{L^2(\R_+)} \|
u\|_{H^1_2(\R_+)}^\be  \Big).
\end{equation}
\end{lemm}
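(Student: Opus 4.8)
The plan is to reduce the fractional estimate to its two integer-order endpoints $\be=0$ and $\be=1$ by complex interpolation, working directly on $\R_+$ with the half-space spaces of Section~2, so that no extension to $\R$ is needed; since the assertion is an a priori estimate one may take $u$ smooth, so that $\nabla(u^2)=2u\nabla u$ holds pointwise. Both endpoint bounds are elementary. At the $L^p$ level, Hölder's inequality with $\frac1p=\frac1{p_1}+\frac1{p_2}$ gives
\[
\| u^2\|_{L^p(\R_+)}=\| u\|^2_{L^{2p}(\R_+)}=\big\| |u|\,|u| \big\|_{L^p(\R_+)}\le \| u\|_{L^{p_1}(\R_+)}\,\| u\|_{L^{p_2}(\R_+)}.
\]
At the first-order level, using \eqref{CK10-april04} to write $\| u^2\|_{H^1_p(\R_+)}\approx \| u^2\|_{L^p(\R_+)}+\|\nabla(u^2)\|_{L^p(\R_+)}$, then $|\nabla(u^2)|\le 2|u|\,|\nabla u|$, Hölder, and $\|\nabla u\|_{L^{p_2}(\R_+)}\le\| u\|_{H^1_{p_2}(\R_+)}$, one obtains
\[
\| u^2\|_{H^1_p(\R_+)}\le c\big(\| u^2\|_{L^p(\R_+)}+\| u\|_{L^{p_1}(\R_+)}\,\| u\|_{H^1_{p_2}(\R_+)}\big).
\]

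Next I interpolate. By the complex interpolation identity \eqref{CK20-april04}, $H^\be_p(\R_+)$ is an interpolation space between $L^p(\R_+)$ and $H^1_p(\R_+)$ with parameter $\be$, so the standard multiplicative norm inequality of the complex method (see \cite{BL}) gives $\| u^2\|_{H^\be_p(\R_+)}\le c\,\| u^2\|_{L^p(\R_+)}^{1-\be}\,\| u^2\|_{H^1_p(\R_+)}^{\be}$. Inserting the first-order bound and using $(a+b)^\be\le a^\be+b^\be$ yields $\| u^2\|_{H^\be_p(\R_+)}\le c\,\| u^2\|_{L^p(\R_+)}+c\,\| u^2\|_{L^p(\R_+)}^{1-\be}\| u\|_{L^{p_1}(\R_+)}^{\be}\| u\|_{H^1_{p_2}(\R_+)}^{\be}$. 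Finally, substituting the $L^p$-level Hölder bound into the factor $\| u^2\|_{L^p}^{1-\be}$ of the last term gives $\| u^2\|_{L^p}^{1-\be}\| u\|_{L^{p_1}}^{\be}\le \| u\|_{L^{p_1}(\R_+)}\,\| u\|_{L^{p_2}(\R_+)}^{1-\be}$, hence the two-term estimate
\[
\| u^2\|_{H^\be_p(\R_+)}\le c\,\| u^2\|_{L^p(\R_+)}+c\,\| u\|_{L^{p_1}(\R_+)}\,\| u\|_{L^{p_2}(\R_+)}^{1-\be}\,\| u\|_{H^1_{p_2}(\R_+)}^{\be}.
\]
Since the remaining summand on the right-hand side of \eqref{CK-june14-20} is nonnegative, the stated inequality follows a fortiori.

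I do not expect a genuine obstacle: each step is merely Hölder's inequality, the characterization \eqref{CK10-april04} of $H^1_p(\R_+)$, or the interpolation identity \eqref{CK20-april04}. The one place that calls for care is the exponent bookkeeping in the final step — rearranging the interpolated factor $\| u^2\|_{L^p}^{1-\be}$ by Hölder so that the powers of $\| u\|_{L^{p_1}}$, $\| u\|_{L^{p_2}}$ and $\| u\|_{H^1_{p_2}}$ come out exactly $1$, $1-\be$ and $\be$ — together with making sure the interpolation identity is invoked in the half-space setting with the right normalization and degenerates correctly at $\be=1$ (and, if the endpoint $p=1$ is to be covered, replacing \eqref{CK20-april04} there by a direct Gagliardo-type description of $H^\be_1(\R_+)$).
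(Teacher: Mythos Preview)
Your argument is correct and is genuinely different from the paper's. The paper proceeds by writing $\De^{\be/2}(u^2)$ via the pointwise identity
\[
\De^{\be/2}(u^2)(x)=2u(x)\,\De^{\be/2}u(x)+c(n,\be)\int_{\R}\frac{(u(x)-u(y))^2}{|x-y|^{n+\be}}\,dy,
\]
bounding the first piece by H\"older plus the interpolation $\|u\|_{H^\be_{p_2}}\le c\|u\|_{L^{p_2}}^{1-\be}\|u\|_{H^1_{p_2}}^{\be}$, and recognizing the second piece as $\|u\|^2_{B^{\be/2}_{2,2p}}$, which is then interpolated between $L^2$ and $H^1_2$; this is where the third summand $\|u\|_{L^2}^{2-\be}\|u\|_{H^1_2}^{\be}$ in \eqref{CK-june14-20} originates. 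Your route --- endpoint bounds at $\be=0,1$ followed by the multiplicative complex-interpolation inequality applied to $u^2$ itself --- is shorter, avoids the fractional Leibniz decomposition and the Besov-space identification entirely, and actually yields the sharper two-term bound you display (so the $L^2$--$H^1_2$ term in \eqref{CK-june14-20} is redundant). The paper's approach, on the other hand, makes the structure of the fractional product estimate transparent and would adapt more readily if one wanted separate control of the ``commutator'' piece. Your caveat about the endpoint $p=1$ is apt: both the paper's Besov/multiplier arguments and the interpolation identity \eqref{CK20-april04} are stated in ranges that do not obviously include $p=1$, so neither proof is cleaner there.
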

\begin{proof}
We note that
\begin{align}\label{u-2betanorm}
\| u^2\|_{ H^\be_p(\R_+)} \approx \| u^2\|_{ H^\be_p(\R)} \approx \|
u^2\|_{L^p(\R)} + \| \De^{\frac{\be}{2}} u^2 \|_{L^p(\R)},
\end{align}
where
\begin{align*}
\De^{\frac{\be}2} f(x) =- c(n, \beta) \int_{\R} \frac{f(x) -f(y)}{|x
-y|^{n +\be}} dy
\end{align*}
with a positive constant $c(n, \beta)$. Simple computations show
that
\begin{equation}\label{CK-june14-10}
\De^{\frac{\be}2} u^2(x) = 2u(x) \De^{\frac{\be}2} u(x) + c(n, \al)
 \int_{\R} \frac{(u(x) +u(y))^2}{|x -y|^{n +\be}} dy.
\end{equation}
%%%%%%%%%%%%%%%%%
%{\sidenote{\color{blue}Kyungkeun: ...Is the sign of second term in
%\eqref{CK-june14-10} negative?...}}
%%%%%%%%%%%%
Hence, we obtain
\begin{align*}
\|u \De^{\frac{\be}2} u\|_{L^p} \leq \| u\|_{L^{p_1}}
\|\De^{\frac{\be}2} u\|_{L^{p_2}} \leq \| u\|_{L^{p_1}} \|
u\|_{H^\be_{p_2}} .
\end{align*}
We recall that  for $1 \leq r < \infty$ (see e.g. \cite[Theorem
6.4.4]{BL})
\begin{align*}
 \| u\|_{ H^{\beta}_{r}(\mathbb{R}^n)} \leq \left\{
\begin{array}{l}
\|u\|_{B^\be_r(\R)}, \qquad 1 \leq r \leq 2,\\
\vspace{-3mm}\\
\|u\|_{B^\be_{r2}(\R)}, \qquad 2 \leq r < \infty.
\end{array}
\right.
\end{align*}
Via interpolations $(L^r(\R), H^1_r(\R))_{\be} = {B^\be_r(\R)}$ and
$(L^r(\R), H^1_r(\R))_{\be, 2} = {B^\be_{r2}(\R)}$ (refer to
\cite[section 3.5]{BL}), we have $\| u\|_{
H^{\beta}_{p_2}(\mathbb{R}^n)} \leq c \| u\|^{1-\be}_{L^{p_2}(\R_+)}
\| u\|_{H^1_{p_2}(\R_+)}^\be$ and therefore,
\begin{align}\label{firstfractional}
\|u \De^{\frac{\be}2} u\|_{L^{p}(\R)} \leq c\| u\|_{L^{p_1}(\R_+)}\|
u\|^{1-\be}_{L^{p_2}(\R_+)} \| u\|_{H^1_{p_2}(\R_+)}^\be .
\end{align}
Using similar arguments for the second term in \eqref{CK-june14-10},
we get
\[
(\int_{\R} (\int_{\R} \frac{|u(x) -u(y)|^2}{|x -y|^{n +\be}} dy
)^pdx)^{\frac{1}{p}} =
 (\int_{\R} (\int_{\R} \frac{|u(x) -u(y)|^2}{|x
-y|^{n +2 \frac{\be}2}} dy )^{\frac{2p}{2}}dx)^{\frac{2}{2p}}
\]
\begin{equation}\label{secondfractional}
\leq  \| u\|^2_{B^{\frac{\be}2}_{2, 2p}(\R)}\leq c \|
u\|^{2-\be}_{L^2(\R_+)} \| u\|_{H^1_2(\R_+)}^\be .
\end{equation}
From \eqref{u-2betanorm}, \eqref{firstfractional} and
\eqref{secondfractional}, we deduce the lemma.
\end{proof}

Next we give the proof of Theorem \ref{main-nse-est}.
\begin{proof-nse-a}
Let $\hat{p}=p-\pi$, where $\pi$ is given in \eqref{nse-60}. We then
obtain by the result of Theorem \ref{maintheo-stokes} that
\[
\| v\|_{L^q(0,T; H^{1+\beta}_p(\mathbb{R}^3_+))} +\|
\hat{p}\|_{L^q(0,T; H^{\beta}_p(\mathbb{R}^3_+))} \leq c \|
w\|_{L^q(0,T; H^{\beta-1}_{p,0}(\mathbb{R}^3_+ )) }+c \| v_0\|_{
{B}^{1+\beta-\frac{2}{q}}_{pq,0}(\mathbb{R}^3_+) },
\]
where $w$ is given in Lemma \ref{decompositon-helmholtz}. Using the
estimate \eqref{CK-june13-100} after integrating it in time
variable, we have \eqref{nse-120-march22}.
%\[
%\| v\|_{L^q(0,T; H^{1+\beta}_p(\mathbb{R}^3_+))} +\| p\|_{L^q(0,T;
%H^{\beta}_p(\mathbb{R}^3_+))} \leq c \| v^2\|_{L^q(0,T;
%H^{\beta}_{p,0}(\R_+ )) }+c \| v_0\|_{
%{B}^{1+\beta-\frac{2}{q}}_{pq,0}(\R_+) },
%\]
It remains to show that the righthand side in
\eqref{nse-120-march22} is bounded. Due to the estimate
\eqref{CK-june14-20} by choosing $p_2=2$, we note that
\[
\norm{v^2}_{H^{\beta}_{p,0}(\mathbb{R}^3_+)}\leq C\Big( \|
v^2\|_{L^p(\mathbb{R}^3_+)} +  \| v\|^{2-\be}_{L^2(\mathbb{R}^3_+)}
\| v\|_{H^1_2(\mathbb{R}^3_+)}^\be +  \| v\|_{L^{p_1}
(\mathbb{R}^3_+)}\| v\|^{1-\be}_{L^{2}(\mathbb{R}^3_+)} \|
v\|_{H^1_{2}(\mathbb{R}^3_+)}^\be  \Big),
%\qquad
%\frac{1}{p}=\frac{1}{p_1}+\frac{1}{2}.
\]
where $1/p_1=1/p-1/2$. Taking $L^q-$norm in time variable, we obtain
\[
\norm{v^2}_{L^q(0,T; H^{\beta}_{p,0}(\mathbb{R}^3_+))}\leq
C\norm{v}^2_{L^{2q}(0,T; L^{2p}(\mathbb{R}^3_+))} +
\norm{v}^{2-\beta}_{L^{\infty}(0,T;
L^{2}(\mathbb{R}^3_+))}\norm{v}^{\beta}_{L^{2}(0,T;
H^{1}_{2}(\mathbb{R}^3_+))}
\]
\begin{equation}\label{CK-june14-30}
+C\norm{v}_{L^{\frac{2q}{2-q\beta}}(0,T;
L^{p_1}(\mathbb{R}^3_+))}\norm{v}^{1-\beta}_{L^{\infty}(0,T;
L^{2}(\mathbb{R}^3_+))}\norm{v}^{\beta}_{L^{2}(0,T;
H^{1}_{2}(\mathbb{R}^3_+))}.
\end{equation}
Since $2<p_1\le 6$ and $3/p_1+(2-q\beta)/q=3/2$, the last term in
\eqref{CK-june14-30} is finite. This completes the proof.
\end{proof-nse-a}

\section*{Acknowledgements}
T.-K. Chang's work was partially supported by NRF-2009-0088692
%%%%%by NRF-2011-0028951.
and K. Kang's work was partially supported by NRF-2012R1A1A2001373.

\begin{equation*}
\left.
\begin{array}{cc}
{\mbox{Tongkeun Chang}}\qquad&\qquad {\mbox{Kyungkeun Kang}}\\
{\mbox{Department of Mathematics }}\qquad&\qquad
 {\mbox{Department of Mathematics}} \\
{\mbox{Yonsei University
}}\qquad&\qquad{\mbox{Yonsei University}}\\
{\mbox{Seoul, Republic of Korea}}\qquad&\qquad{\mbox{Seoul, Republic of Korea}}\\
{\mbox{chang7357@yonsei.ac.kr }}\qquad&\qquad
{\mbox{kkang@yonsei.ac.kr }}
\end{array}\right.
\end{equation*}

\end{document}